\numberwithin{equation}{section}
\newtheorem{thm}{Theorem}[section]
\newtheorem{prop}[thm]{Proposition}
\newtheorem{cor}[thm]{Corollary}
\newtheorem{lem}[thm]{Lemma}
\theoremstyle{definition}
\newtheorem{eg}[thm]{Example}
\newtheorem{defn}[thm]{Definition}
\newtheorem*{remark}{Remark}
\DeclarePairedDelimiterX\abs[1]\lvert\rvert
\DeclarePairedDelimiterX\angles[1]\langle\rangle
\DeclarePairedDelimiterX\norm[1]\lVert\rVert
\DeclarePairedDelimiterX\variables[1]\lparen\rparen
\DeclarePairedDelimiterX\ceil[1]\lceil\rceil
\DeclarePairedDelimiterX\floor[1]\lfloor\rfloor
\DeclarePairedDelimiterX\bra[1]\langle\vert
\DeclarePairedDelimiterX\ket[1]\vert\rangle
\DeclarePairedDelimiterX\braket[2]\langle\rangle
\DeclarePairedDelimiterX\pairing[2]\langle\rangle
\DeclarePairedDelimiterX\inner[2]\lparen\rparen
\DeclarePairedDelimiterX\Set[1]\{\}
\DeclarePairedDelimiterX\GSet[1]\langle\rangle
\NewDocumentCommand \fun { m e{^_} O{} }
  {%
    \operatorname{#1}%
    \IfValueT{#2}{\sp{#2}}%
    \IfValueT{#3}{\sb{#3}}%
    \ifblank{#4}{}{\mleft(#4\mright)}%
  }
\NewDocumentCommand \mcal { m } {\fun{\mathcal{#1}}}
\NewDocumentCommand \mscr { m } {\fun{\mathscr{#1}}}
\NewDocumentCommand \mbf { m } {\fun{\mathbf{#1}}}
\NewDocumentCommand \cat { m } {\mbf{#1}}
\NewDocumentCommand{\createbunch}{ m O{} m }
 {
  \clist_map_inline:nn { #3 } { \cs_new_protected:cpn { #2 ##1 } { #1 { ##1 } } }
 }
\createbunch{\mathbb}{N,Z,Q,R,C,F,K,T,D,W,M}
\createbunch{\mcal}[c]{A,C,E}
\createbunch{\mscr}[s]{A,C,E}
\NewDocumentCommand \one {} {\mathbf{1}}
\NewDocumentCommand \Hopf {} {{H}}
\NewDocumentCommand \dHopf {} {\dual{\Hopf}}
\NewDocumentCommand \stab {m} {\cat{stab}[#1]}
\NewDocumentCommand \modcat { m } 
    {\cat{mod}[#1]}
\NewDocumentCommand \comod { m } 
    {\cat{comod}[#1]}
\NewDocumentCommand \modp { m } {\cat{mod}^{\rm p}[#1]}
\NewDocumentCommand \comodp { m } {\cat{comod}^{\rm p}[#1]}
\NewDocumentCommand \iso {} {\cong}
\NewDocumentCommand \tensor{ om }
    {\leftindex_{#1}\otimes_{#2}}
\NewDocumentCommand \dual { m }
    { \ifblank{#1}{(\:\cdot\:)}{#1}^{\vee} }
\NewDocumentCommand \opp { m }
    { \ifblank{#1}{(\:\cdot\:)}{#1}^{\rm op} }
\DeclareRobustCommand\longepimorphism
\NewDocumentCommand\longepi{}{\longepimorphism}
\DeclareRobustCommand\longmonomorphism
\NewDocumentCommand\longmono{}{\longmonomorphism}
\NewDocumentCommand \mm {}{\mathfrak{m}}
\NewDocumentCommand \Rmod {} {\modcat{R}}
\NewDocumentCommand \Hmod {} {\modcat{\dHopf}}
\NewDocumentCommand \Amod {} {\modcat{A}}
\NewDocumentCommand \Hcomod {} {\comod{\Hopf}}
\NewDocumentCommand \Hmodp {} {\modp{\dHopf}}
\NewDocumentCommand \Hcomodp {} {\comodp{\Hopf}}
\NewDocumentCommand \cAC {O{1}} {\cA(#1)^{\C}}
\NewDocumentCommand \cAR {O{1}} {\cA(#1)^{\R}}
\NewDocumentCommand \dcAR {O{1}} {\dual{\cA(1)}_{\R}}
\NewDocumentCommand \MtwoC {O{2}} {\M_{#1}^{\C}}
\NewDocumentCommand \MtwoR {O{2}} {\M_{#1}^{\R}}
\NewDocumentCommand \arXiveprint {m} {\href{https://arxiv.org/abs/#1}{arXiv:#1}}
\title[The stable Picard group of $\R$-motivic finite free Hopf algebra]{The stable Picard group of finite Adams Hopf algebroids with an application to the $\R$-motivic Steenrod subalgebra $\cAR$}
\author[Gao]{Xu Gao}
\address[Gao]{Department of Mathematics, University of California, Santa Cruz}
\author[Li]{Ang Li}
\address[Li]{Department of Mathematics, University of California, Santa Cruz}
\begin{document}
\begin{abstract}
	In this paper, we investigate the rigidity of the stable comodule category of a specific class of Hopf algebroids known as \emph{finite Adams}, shedding light on its Picard group. 
	Then we establish a reduction process through base changes, enabling us to effectively compute the Picard group of the  \emph{$\R$-motivic mod $2$ Steenrod subalgebra} $\cAR$. 
        Our computation shows that $\Pic(\cAR)$ is isomorphic to $\Z^4$, where two ranks come from the motivic grading, one from the algebraic loop functor, and the last is generated by the \emph{$\R$-motivic joker} $J$.
\end{abstract}
\maketitle

\tableofcontents


\section{Introduction}\label{sec:intro}

\subsection{The Picard group of a Hopf algebra}
Given a monoidal category $\mathcal{C}$, its \emph{Picard group} is defined as the set of invertible isomorphism classes in $\mathcal{C}$, equipped with the multiplication induced by the tensor product. 

A notable example is the \emph{(stable) Picard group} $\Pic(A)$ of a \emph{finite-dimensional cocommutative Hopf algebra} $A$ over a field $k$. 
That is defined as the Picard group of the \emph{stable module category} $\stab{A}$.
In $\stab{A}$, objects are finitely generated left $A$-modules, and morphisms are $A$-module homomorphisms modulo those factoring through projective $A$-modules.  
The symmetric monoidal structure is given by the \emph{smash product}. 
For an $A$-module $M$, its $k$-dual $\dual{M}:=\Hom_k(M,k)$ naturally becomes a comodule over the dual coalgebra $\dual{A}$ of $A$. 
Furthermore, the Hopf algebra structure of $A$ ensures that any $\dual{A}$-comodule is equipped with an $A$-module structure. 
Consequently, dualization provides us a contravariant auto-equivalence on $\Amod$, which passes to the stable module category
\[
	D\colon\opp{\stab{A}} \longrightarrow \stab{A}.
\]
Therefore, $\stab{A}$ is a \emph{rigid category} allowing us to determine whether an object in $\stab{A}$ is invertible by checking if it is \emph{endo-trivial}. 
Some elements of $\Pic(A)$ are readily identified: such as the possible grading shiftings of the tensor unit. 
Additionally, the category $\stab{A}$ is triangulated, and the algebraic loop functor also contributes to $\Pic(A)$. 
However, these elements may not account for the entire Picard group.
The cokernel of the inclusion is thus the interesting part of investigating.

\subsection{The Picard group of $\cA(1)$}
The \emph{mod $2$ Steenrod algebra} $\cA$ is a Hopf algebra over $\F_2$, generated by the \emph{Steenrod squares} $\Sq^{2^i}$ ($i\geqslant 0$), where $\Sq^n$ represents the cohomology operation of degree $n$. 
The whole algebra $\cA$ is not finite-dimensional, while the
subalgebra $\cA(n)$ generated by $\Sq^1, \Sq^2, \cdots, \Sq^{2^n}$ is finite. One can read more in \cite{margolis}*{Part II}.

The Picard group of $\cA(1)$ was computed by Adams and Priddy\cite{adams1976uniqueness}, where the interesting cokernel is an order $2$ torsion generated by an $\cA(1)$-module called the \emph{joker} (see \cref{fig:jokerc}). The Joker is the cyclic $\cA(1)$-module $\cA(1)/\Sq^3\cA(1)$ which plays a special role in the study of $\cA(1)$-modules. For a recent result which highlights the special significance of the Joker see \cite{bhattacharya2017stable}.
\begin{figure}[h]
	\[
		\begin{tikzpicture}
		\begin{scope}[ thick, every node/.style={sloped,allow upside down}, scale=0.5]
		\draw (3,0)  node[inner sep=0] (v30) {} -- (3,1) node[inner sep=0] (v31) {};
		\draw  (2.5,2) node[inner sep=0] (v22) {};
		\draw (3,3)  node[inner sep=0] (v33) {} -- (3,4) node[inner sep=0] (v34) {};
		\draw [color=blue] (v31) to [bend right=70] (v33);
		\draw [color=blue] (v30) to [bend left = 50] (v22);
		\draw [color=blue] (v22) to [bend left =50] (v34);
		\filldraw (v30) circle (2.5pt);
		\filldraw (v31) circle (2.5pt);
		\filldraw (v22) circle (2.5pt);
		\filldraw (v33) circle (2.5pt);
		\filldraw (v34) circle (2.5pt);
		\draw (3,-0.5) node[left]{} (3,1) node[right]{} (2.5,2) node[left]{} (1,2) node[left]{$ $};
		\draw (3,4) node[right]{$ $} (3,3) node[right]{$ $} (2,3) node[left]{$ $} (2,2) node[right]{$ $};
		\end{scope}\end{tikzpicture}
	\]
	\caption{The $\cA(1)$-module Joker is illustrated as above. Each $\bullet$ represents an $\F_2$-generator. The black and blue lines represent the action of $\Sq^1$ and $\Sq^2$, reading from bottom to top, respectively.}
	\label{fig:jokerc}
\end{figure}
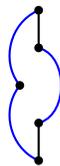

Voevodsky constructed the motivic analogue of the Steenrod (sub)algebra\cite{voevodsky2003reduced}. Gheoghe, Isaksen, and Ricka \cite{gheorghe2018picard} thus computed the Picard group of the complex motivic Steenrod subalgebra $\cAC$. Likewise, the interesting cokernel is still generated by the \emph{$\C$-motivic joker}, although it is no longer torsion.

\subsection{The $\R$-motivic situation}
In both classical and $\C$-motivic cases, the Steenrod subalgebra being considered is a Hopf algebra. One major difference is, the \emph{$\C$-motivic Steenrod subalgebra} $\cAC$ is free of finite rank over $\MtwoC$, the coefficient ring of the \emph{$\C$-motivic cohomology theory} (cf. \cref{sec:mot}), rather than over $\F_2$. Unlike its $\C$-motivic counterpart, the \emph{$\R$-motivic Steenrod subalgebra} $\cAR$ is not a Hopf algebra. 
However, it remains free of finite rank over the coefficient ring $\MtwoR$. Its $\MtwoR$-dual, turns out to be a \emph{Hopf algebroid} with base $\MtwoR$ in the sense of \cite{ravenel2023complex}*{A1} and \cite{hovey2003homotopy}.

Over a Hopf algebroid, the dualization functor is not an endo functor on $\stab{A}$. However, many results concerning finite-dimensional cocommutative Hopf algebras can be extended to the Hopf algebroid setting. In this paper, we investigate the notions of the \emph{stable (co)module category} and the \emph{Picard group} and constructed a functor behaves analogously as the dualization functor. Even better, since $\MtwoR$ is a polynomial ring, the stable module category of $\cAR$ comes from a \emph{Frobenius exact category} (cf. \cref{Frobenius}) and is thus \emph{rigid}. 
Then, following standard arguments on a rigid symmetric monoidal category, we are able to determine the part of the Picard group coming from grading shifting of the unit and the algebraic loop functor.

The computation of the rest part can be achieved through a reduction technique utilizing a base change functor from $\MtwoR$ to $\F_2$. This allows us to return to the more familiar realm of finite-dimensional cocommutative Hopf algebras over $\F_2$.
The key observation is that the base change functor between the relevant exact categories behaves very well, enabling us to pull back information regarding the Picard group.

For $\cAR$, its base change to $\F_2$ is isomorphic to the group algebra $\F_2[D_8]$ of the \emph{dihedral group} $D_8$ of order $8$, which is well-understood. 
Our computation shows that the Picard group of $\cAR$ is isomorphic to that of $\cAC$ and of $\F_2[D_8]$. 

However, the assumption that the base ring of the Hopf algebroid is a polynomial ring needs not to be satisfied. 
One notable example is the \emph{$C_2$-equivariant cohomology over a point}. 
Consequently, the results presented in this paper may not be directly applicable to its $C_2$-equivariant counterpart.

\subsection{Organization.}
In \cref{sec:background}, we delve into the stable module category of the dual algebra of a Hopf algebroid. 
We reveal it as the quotient of a Frobenius exact category and then deduce the ordinary part of the Picard group. 
Next, in \cref{sec:mot}, we introduce the $\R$-motivic setting and construct a base change functor to the classical setting. 
We establish a comparsion injection between the Picard groups and employ the Margolis homology as a tool for assessing invertibility. 
Subsequently, in \cref{sec:pic}, we address the results in previous sections and conduct a comprehensive computation of the Picard group of $\cAR$.
Finally, in \cref{sec:final}, we outline the potential generalizations and applications stemming from the present results.


\section{Stable module theory of finite Adams Hopf algebroids}\label{sec:background}
Let's set up some categories we will be working with:
\begin{itemize}[wide]
	\item $(\K,\tensor{},\one)$: a \emph{locally noetherian symmetric monoidal abelian category}. 
	That is a symmetric monoidal abelian category satisfying the \emph{AB5} axiom, the tensor functor $\tensor{}$ commutes with colimits, and $\K$ has a generating set of \emph{noetherian objects}, namely objects satisfying the \emph{ascending chain condition}.
	
\begin{eg}
	The category of modules over a noetherian ring $k$ provides a locally noetherian symmetric monoidal abelian category.
	Its noetherian objects are the finitely generated $k$-modules.
\end{eg}
\begin{remark}
    We will use the notion of \emph{generalized elements} of objects $X$ of $\K$. 
    That are sections of the presheaf $\Hom(-,X)$.
    Then the \emph{Yoneda Lemma} tells us that an object is determined by its generalized elements.
\end{remark}
	\item $\cat{Alg}_{\K}$: the category of monoids in $\K$. 
	Its objects are called \emph{$\K$-algebras}. 
	Its morphisms are called \emph{$\K$-algebraic maps}. 
	\item $\cat{Com}_{\K}$: the full subcategory of $\cat{Alg}_{\K}$ consisting of \emph{commutative} $\K$-algebras.

	\item $\cat{Mod}[R]$: the symmetric monoidal abelian category of 
	left $R$-modules for a commutative $\K$-algebra $R$. 
	Its tensor product structure is denoted by $-\tensor{R}-$. 
	Note that $\cat{Mod}[R]$ is a closed category due to the commutativity of $R$.
	We will use $\dual{M}$ to denote the \emph{weak dual} $\Hom_R(M,R)$ of an object $M$ in $\cat{Mod}[R]$. 
	The natural pairing of $\dual{M}$ and $M$ is denoted by $\braket{-}{-}$. 
\begin{remark}
	We still need to consider two different $R$-module structures on the same underlying object $M$. 
	To distinguish them, we refer to them as the \emph{left} $R$-module structure (abbreviated as $l$) and the \emph{right} $R$-module structure (abbreviated as $r$), even though both $(M,l)$ and $(M,r)$ are treated as objects in $\cat{Mod}[R]$.
	We will use $M\tensor{r,l}N$ to denote the tensor product of the $R$-modules $(M,r)$ and $(N,l)$. 
	The additional $R$-module structure $r$ on $M$ gives the weak dual $\dual{M}$ of $(M,l)$ an extra \emph{right} $R$-module structure\footnote{%
		Which is exactly the \emph{left} $R$-module structure on $\dual{M}$ when $M$ is viewed as an $(R,R)$-bimodule.} 
	given by the formula 
	$\braket{-\cdot r}{-}\coloneq\braket{-}{-\cdot r}$.
\end{remark}
	\item $\Rmod$: the full subcategory of $\cat{Mod}[R]$ consisting of noetherian $R$-modules.	  
\end{itemize}

The \emph{Hilbert's Basis Theorem} \cite{HBTrev} tells us that any finitely generated $\K$-algebra $R$ is \emph{noetherian} in the sense that $\cat{Mod}[R]$ is a locally noetherian symmetric monoidal abelian category. 
If this is the case, then $\Rmod$ consists of \emph{finitely generated} $R$-modules (namely quotients of the free $R$-modules over noetherian objects in $\K$). In what follows, we keep this assumption.

\subsection{Hopf algebroid and its comodules}
Following \cite{ravenel2023complex}*{A1} and \cite{hovey2003homotopy}, we give the following definition.
\begin{defn}
	A \emph{Hopf algebroid} is a cogroupoid object in the category $\cat{Com}_{\K}$. 
\end{defn}
Spelling out the above definition, a \emph{Hopf algebroid} consists of a pair $(\Hopf,R)$ of commutative $\K$-algebras and the following $\K$-algebraic maps
\begin{align*}
	\Hopf &\overset{\Delta}{\longrightarrow}
		\Hopf\tensor{r,l}\Hopf 
		&&\text{\emph{comultiplication} inducing the composition}\\
	\Hopf &\overset{c}{\longrightarrow} 
		\Hopf 
		&&\text{\emph{conjugation} inducing the inverse}\\
	\Hopf &\overset{\epsilon}{\longrightarrow} 
		R 
		&&\text{\emph{augmentation} inducing the identity}\\
	R &\overset{\eta}{\longrightarrow} 
		\Hopf 
		&&\text{\emph{right unit} inducing the target}\\
	R &\overset{\iota}{\longrightarrow} 
		\Hopf 
		&&\text{\emph{left unit} inducing the source}
\end{align*}
satisfying certain axioms. 
\begin{remark}
	The $\K$-algebra $R$ is called the \emph{base} of this Hopf algebroid. 
	When the base is implicated, we may simply say that $\Hopf$ is a Hopf algebroid. 
	The $\K$-algebraic maps $\iota$ and $\eta$ provide the $\K$-algebra $\Hopf$ two distinct $R$-module structures. 
	We will refer to the one induced by $\iota$ as the \emph{left} $R$-module structure, and the one induced by $\eta$ as the \emph{right} $R$-module structure. 
	Then the $\K$-algebraic maps $\Delta$ and $\epsilon$ are $R$-linear for both $R$-module structures, and $c$ is an anti-isomorphism between the \emph{left} and \emph{right} $R$-module structures. 
	We will employ the \emph{Sweedler's notation}, denoting the comultiplication simply as $\Delta(x)=x_{(1)}\tensor{} x_{(2)}$. 
\end{remark}

\begin{defn}
	A \emph{(left) $\Hopf$-comodule} is a left $R$-module $M$ with an $R$-linear map 
	\[
			\psi_M\colon M\longrightarrow \Hopf\tensor{R}M
	\]
	satisfying the axioms of counts and coassociativity.
\end{defn}
\begin{remark}
	We will employ the \emph{Sweedler's notation}, denoting the coaction simply as $\psi_M(m)=m_{[-1]}\tensor{} m_{[0]}$. 
	Please note that: 
	\begin{enumerate}
		\item $m_{[-1]}$ presents the $\Hopf$-parts of $\psi_M(m)$; and 
		\item $m_{[0]}$ presents the $M$-parts of $\psi_M(m)$.
	\end{enumerate}
\end{remark}

\begin{defn}
	Given two $\Hopf$-comodules $M,N$, their \emph{tensor product} is the left $R$-module $M\tensor{R}N$ with the coaction
	\[
			\psi_{M\tensor{R}N}(m\tensor{} n) = 
			m_{[-1]}n_{[-1]} \tensor{} m_{[0]} \tensor{} n_{[0]}.
	\]
\end{defn}

We will use $\Hcomod$ to denote the category of finitely generated $\Hopf$-comodules and $\Hcomodp$ its full subcategory consisting of $\Hopf$-comodules whose underlying $R$-modules are finitely generated and projective.
\begin{prop}
	The category $\Hcomod$ is a symmetric monoidal category.
\end{prop}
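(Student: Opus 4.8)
The plan is to obtain the symmetric monoidal structure on $\Hcomod$ by transporting the one on $\Rmod$ along the forgetful functor $U\colon \Hcomod \to \Rmod$ that discards the coaction. This functor is faithful, since a morphism of comodules is determined by its underlying $R$-linear map, and it becomes strict monoidal once we know the tensor coaction is well defined. Because $(\Rmod,\tensor{R},R)$ is already symmetric monoidal, the coherence axioms (pentagon, triangle, and the two hexagons) then hold in $\Hcomod$ for free: each such diagram is sent by $U$ to the corresponding commuting diagram in $\Rmod$, and faithfulness of $U$ reflects commutativity. Hence the real content reduces to three points: (a) the given coaction makes $M\tensor{R}N$ a comodule; (b) the base $R$ carries a comodule structure serving as the monoidal unit; and (c) the associator, the unitors, and the symmetry of $\Rmod$ are morphisms of comodules.

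First I would verify (a). The formula $\psi_{M\tensor{R}N}(m\tensor{} n)=m_{[-1]}n_{[-1]}\tensor{} m_{[0]}\tensor{} n_{[0]}$ must be checked to descend to the relative tensor product and to be $R$-linear; this is where the Hopf algebroid axioms genuinely enter, namely that the multiplication of $\Hopf$ together with the unit maps $\iota,\eta$ interact so that the $R$-balancing relations are respected and that $\Delta,\epsilon$ are $R$-linear for both module structures. Granting well-definedness, the counit axiom follows from $\epsilon$ being a $\K$-algebra map, so that $\epsilon(m_{[-1]}n_{[-1]})=\epsilon(m_{[-1]})\epsilon(n_{[-1]})$, combined with the counit axioms for $M$ and $N$ and the identity $r\cdot(m\tensor{} n)=(rm)\tensor{} n=m\tensor{}(rn)$ in $M\tensor{R}N$. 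Coassociativity is a direct Sweedler computation: applying $\Delta\tensor{}\id$ and using that $\Delta$ is multiplicative produces $m_{[-1](1)}n_{[-1](1)}\tensor{} m_{[-1](2)}n_{[-1](2)}\tensor{} m_{[0]}\tensor{} n_{[0]}$, which the coassociativity of the coactions of $M$ and $N$ rewrites as the output of $\id\tensor{}\psi_{M\tensor{R}N}$.

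For (b), I would give $R$ the coaction $\psi_R\colon R\to \Hopf\tensor{R}R\iso \Hopf$ equal to the left unit $\iota$; its counit axiom is exactly the Hopf algebroid identity $\epsilon\iota=\id_R$, and its coassociativity follows from $\Delta$ being an algebra map. One then checks that the canonical isomorphisms $R\tensor{R}M\iso M\iso M\tensor{R}R$ of $\Rmod$ intertwine the coactions; for the left unitor this unwinds to $1_{\Hopf}\cdot m_{[-1]}=m_{[-1]}$, so the unitors lie in $\Hcomod$. The associator of $\Rmod$ is likewise a comodule map, by comparing the two iterated coactions and again invoking that $\Delta$ is an algebra map.

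The step I expect to demand the most care, and which I single out as the crux, is showing that the symmetry $\tau\colon M\tensor{R}N\to N\tensor{R}M$, $m\tensor{} n\mapsto n\tensor{} m$, is a morphism of comodules. Chasing coactions, $\tau$ is compatible with $\psi_{M\tensor{R}N}$ and $\psi_{N\tensor{R}M}$ precisely when $m_{[-1]}n_{[-1]}=n_{[-1]}m_{[-1]}$ holds in $\Hopf$. This is nothing other than the commutativity of $\Hopf$ as a $\K$-algebra, which is guaranteed because a Hopf algebroid is by definition a cogroupoid object in $\cat{Com}_{\K}$. Thus the braiding comes for free from the commutativity of the representing algebra, with no cocommutativity hypothesis on $\Delta$ needed. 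Having established (a)--(c), the faithfulness of $U$ promotes the coherence of $\Rmod$ to $\Hcomod$, completing the argument.
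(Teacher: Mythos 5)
Your proof is correct and follows the same approach as the paper, which simply declares the tensor product coaction and the unit $(R,\iota)$ and leaves the verifications implicit; you supply exactly those routine checks, including the key observation that the symmetry is a comodule map because $\Hopf$ is commutative (no cocommutativity needed), which is indeed why the cogroupoid-in-$\cat{Com}_{\K}$ definition matters.
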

\begin{proof}
	The tensor product is given as above. The unit of this monoidal structure is $R$, with the $\Hopf$-comodule structure given by $\iota$.
\end{proof}
\begin{prop}
	If $\Hopf$ is flat over $R$, then the $R$-linear category $\Hcomod$ is an abelian category and $\Hcomodp$ is an exact full subcategory of it.
\end{prop}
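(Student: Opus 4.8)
The plan is to show that the forgetful functor $U\colon\Hcomod\to\Rmod$, sending a comodule to its underlying finitely generated $R$-module, is additive, faithful, and \emph{creates} kernels and cokernels, and then to transport the abelian structure of $\Rmod$ back along $U$. That $\Hcomod$ is additive is immediate: the zero module is a zero object, and the biproduct of two comodules $M,N$ has underlying $R$-module $UM\oplus UN$ with coaction $\psi_M\oplus\psi_N$, under the canonical identification $\Hopf\tensor{R}(UM\oplus UN)\iso(\Hopf\tensor{R}UM)\oplus(\Hopf\tensor{R}UN)$. Faithfulness holds because a comodule map is determined by its underlying $R$-linear map. The same computation shows that $U$ \emph{reflects isomorphisms}: if $Uf$ is invertible in $\Rmod$, then its $R$-linear inverse automatically intertwines the coactions, hence is a morphism of comodules.

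The heart of the argument is the construction of kernels, and this is the only place the flatness hypothesis is used. Given $f\colon M\to N$ in $\Hcomod$, set $K=\Ker(Uf)$, a finitely generated $R$-module since $R$ is noetherian, with inclusion $i\colon K\to UM$. Flatness of $\Hopf$ over $R$ makes $\Hopf\tensor{R}-$ left exact, so $\Hopf\tensor{R}K$ is the kernel of $\Hopf\tensor{R}Uf$ and $\Hopf\tensor{R}i$ is a monomorphism. As $f$ is a comodule map, $(\Hopf\tensor{R}Uf)\circ\psi_M\circ i=\psi_N\circ Uf\circ i=0$, so $\psi_M\circ i$ factors uniquely through $\Hopf\tensor{R}K$, defining an $R$-linear map $\psi_K\colon K\to\Hopf\tensor{R}K$. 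The counit and coassociativity axioms for $\psi_K$ follow from those for $\psi_M$ using the injectivity of $\Hopf\tensor{R}i$ and of $\Hopf\tensor{R}\Hopf\tensor{R}i$ (again by flatness), so $K\in\Hcomod$ is the kernel of $f$. Cokernels are dual and need no flatness: for $C=\Coker(Uf)$ with projection $p\colon UN\to C$, right exactness of $\Hopf\tensor{R}-$ shows $\Hopf\tensor{R}p$ is a cokernel, and since $(\Hopf\tensor{R}p)\circ\psi_N\circ Uf=0$ the coaction $\psi_N$ descends to some $\psi_C$; the module $C$ is finitely generated, so $C\in\Hcomod$ is the cokernel of $f$.

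With kernels and cokernels built so that $U$ preserves them, the remaining abelian axiom — that the canonical map from the coimage to the image is an isomorphism — transfers from $\Rmod$: applying $U$ carries this map to the corresponding isomorphism in the abelian category $\Rmod$, and since $U$ reflects isomorphisms the map is already invertible in $\Hcomod$. Hence $\Hcomod$ is abelian. For the second assertion, I will invoke the standard fact that a full additive subcategory of an abelian category closed under extensions inherits a canonical exact structure, whose conflations are the short exact sequences of the ambient category with all three terms lying in the subcategory. It therefore suffices to check that $\Hcomodp$ is closed under finite direct sums (clear, since a sum of finitely generated projectives is finitely generated projective) and under extensions: given $0\to M'\to M\to M''\to0$ in $\Hcomod$ with $M',M''\in\Hcomodp$, the underlying sequence of $R$-modules splits because $UM''$ is projective, so $UM\iso UM'\oplus UM''$ is finitely generated projective and $M\in\Hcomodp$.

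The main obstacle is the kernel construction: one must use flatness precisely to know that $\Hopf\tensor{R}-$ preserves the relevant kernel, while keeping careful track of which of the two $R$-module structures on $\Hopf$ (the one supplied by $\eta$) is being tensored against, so that the coaction genuinely restricts to $K$. Everything else — additivity, the descent of the coaction to cokernels, the verification of the comodule axioms, and the passage to the exact full subcategory $\Hcomodp$ — is routine once the faithfulness and conservativity of $U$ are in hand.
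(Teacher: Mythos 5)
Your proof is correct. It does not follow the paper's route, though: the paper disposes of the whole statement in two lines by citing Ravenel's Theorem A1.1.3 (which asserts that the comodule category over a flat Hopf algebroid is abelian with exact forgetful functor) and then appealing to ``standard results on $\modcat{R}$'' for the exact structure on $\Hcomodp$. What you have written is, in effect, the proof of the cited theorem unpacked: the flatness-based kernel construction, the flatness-free descent of the coaction to cokernels, and the transfer of the image--coimage axiom along the conservative forgetful functor are exactly the standard argument behind A1.1.3, and your extension-closure argument for $\Hcomodp$ (the underlying sequence splits because $UM''$ is $R$-projective) is the ``standard result'' the paper gestures at. Your version buys two things the bare citation does not. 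First, the paper's $\Hcomod$ consists of \emph{finitely generated} comodules, whereas A1.1.3 concerns the category of all comodules; for the finitely generated category to be abelian one must check kernels stay finitely generated, which you correctly get from noetherianness of $R$ (a standing assumption in the paper) --- a point the paper's proof silently absorbs. Second, you correctly isolate where flatness enters (left exactness of $\Hopf\tensor{R}-$, so that $\Hopf\tensor{R}K$ computes the kernel and $\Hopf\tensor{R}i$, $\Hopf\tensor{R}\Hopf\tensor{R}i$ are monic for the comodule axioms) and you rightly flag that the tensor is against the right $R$-module structure on $\Hopf$ supplied by $\eta$; in the Hopf algebroid setting this bookkeeping is genuinely necessary, since the two $R$-structures on $\Hopf$ differ. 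The cost of your approach is length; the benefit is that it is self-contained and makes visible the hypotheses (flatness, noetherianness, fullness and extension-closure of $\Hcomodp$) that the citation-based proof leaves implicit.
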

\begin{proof}
	By \cite{ravenel2023complex}*{Theorem A1.1.3}, $\Hcomod$ is an abelian category and the forgetful functor from $\Hcomod$ to $\modcat{R}$ is exact. 
	Then the second statement follows from standard results on $\modcat{R}$.
\end{proof}

\begin{defn}
	The \emph{stable comodule category} of $\Hopf$ is the category $\stab{\Hopf}$ whose objects are the same as in $\Hcomodp$ and whose morphisms are given by
	\[
	\Hom_{\stab{\Hopf}}[M,N]\coloneq\Hom_{\Hopf}[M,N]/\sim,
	\]
	where two morphisms are equivalent if their difference factors through a projective $\Hopf$-comodule. 
\end{defn}

\begin{remark}
	Let's justify why we consider $\Hcomodp$ instead of $\Hcomod$. In fact, the monoidal category $\Hcomod$ is closed \cite{hovey2003homotopy}*{Theorem 1.3.1}.
		Thus, we are able to talk about \emph{dualizable} objects. It turns out that an $\Hopf$-comodule is dualizable if and only if it is finitely generated and projective as an $R$-module \cite{hovey2003homotopy}*{Proposition 1.3.4}.
	However, in this paper, we would rather to take a more roundabout, yet explicit, approach to involve modules over the dual algebra $\dHopf$, seeing \ref{dualization}.
\end{remark}

\subsection{The dual of a Hopf algebroid}
Following \cite{hovey2003homotopy}*{Definition 1.4.3}, we say a Hopf algebroid $(\Hopf,R)$ is \emph{finite Adams} 
if $\Hopf$ is finitely generated and projective as a left $R$-module. We will keep this assumption in what follows. Then the $R$-coalgebra structure $(\Delta,\epsilon)$ on $\Hopf$ induces an $R$-algebra structure $(\mu,\dual{\epsilon})$ on $\dHopf$, where the multiplication $\mu\colon\dHopf\tensor{R}\dHopf\to\dHopf$ is given by the formula
\[	
	f\tensor{} g
	\longmapsto
	\braket*{f}{-_{(1)}\cdot\eta\braket{g}{-_{(2)}}}.
\]

We will use $\Hmod$ to denote the abelian category of finitely generated (left) $\dHopf$-modules and $\Hmodp$ its full exact subcategory consisting of $\dHopf$-modules whose underlying $R$-modules are finitely generated and projective.
\begin{eg}
	A commutative Hopf algebra $\Hopf$ over a $\K$-algebra $R$ gives a Hopf algebroid $(\Hopf,R)$ with identical left and right units. 
	Note that if $\Hopf$ is finitely generated and projective as a left $R$-module, then $\dual{\Hopf}$ is a \emph{cocommutative} Hopf algebra over $R$.
\end{eg}
\begin{eg}
	In our application, the Hopf algebroid $(\Hopf,\MtwoR)$ will be given by the $\MtwoR$-dual of the \emph{$\R$-motivic Steenrod subalgebra} $\cAR[n]$.
\end{eg}
\begin{defn}
	The \emph{stable module category} of $\dHopf$ is the category $\stab{\dHopf}$ whose objects are the same as in $\Hmodp$ and whose morphisms are given by
	\[
	\Hom_{\stab{\dHopf}}[M,N]\coloneq\Hom_{\dHopf}[M,N]/\sim,
	\]
	where two morphisms are equivalent if their difference factors through a projective $\dHopf$-module. 
\end{defn}

\subsection{Comodule-module equivalence}\label{comodule=module}
The categories we have introduced so far are related by the following proposition:
\begin{prop}
	The two $R$-linear abelian categories $\Hcomod$ and $\Hmod$ over the base category $\Rmod$ are isomorphic.
	\[
	\begin{tikzcd}
		\Hcomod 
		\arrow[dr,"\text{forgetful}"']\arrow[rr, dotted]&& 
		\Hmod \arrow[dl,"\text{forgetful}"] \\
		&\Rmod&
	\end{tikzcd}
	\]
\end{prop}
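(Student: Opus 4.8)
The plan is to produce two functors that are both the identity on underlying $R$-modules and merely translate the extra structure back and forth, so that the asserted triangle commutes strictly and we obtain an honest isomorphism of categories rather than only an equivalence. The essential input is the \emph{finite Adams} hypothesis: since $\Hopf$ is finitely generated and projective as a left $R$-module, it is a dualizable object of $\cat{Mod}[R]$, so we have at our disposal an evaluation $\mathrm{ev}\colon\dHopf\tensor{R}\Hopf\to R$ (namely the pairing $\braket{-}{-}$) together with a coevaluation $\mathrm{coev}\colon R\to\Hopf\tensor{R}\dHopf$ satisfying the two zigzag identities. All of the structural translations below are formal consequences of these two maps.

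First I would define $\Phi\colon\Hcomod\to\Hmod$. Given a comodule $(M,\psi_M)$, I equip the \emph{same} $R$-module $M$ with the $\dHopf$-action
\[
	\rho_M\colon \dHopf\tensor{R}M
	\xrightarrow{\ \id\tensor{}\psi_M\ }
	\dHopf\tensor{R}\Hopf\tensor{R}M
	\xrightarrow{\ \mathrm{ev}\tensor{}\id\ }
	M,
\]
that is $f\cdot m=\braket{f}{m_{[-1]}}\,m_{[0]}$ in Sweedler notation. Counitality of $\psi_M$ forces the unit of $\dHopf$ to act as the identity, and coassociativity of $\psi_M$, fed through the convolution formula defining $\mu$ on $\dHopf$, yields associativity of $\rho_M$; these are precisely the two module axioms. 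Moreover an $R$-linear map $g\colon M\to N$ satisfies $\braket{f}{m_{[-1]}}\,g(m_{[0]})=\braket{f}{g(m)_{[-1]}}\,g(m)_{[0]}$ for all generalized elements exactly when $g$ is a comodule map, and this condition is literally $\dHopf$-linearity, so $\Phi$ is a bijection on every $\Hom$-set.

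Next I would define $\Psi\colon\Hmod\to\Hcomod$ in the reverse direction, using the coevaluation in place of the evaluation. Given a module $(M,\rho_M)$, I equip $M$ with the coaction
\[
	\psi_M\colon M
	\xrightarrow{\ \mathrm{coev}\tensor{}\id\ }
	\Hopf\tensor{R}\dHopf\tensor{R}M
	\xrightarrow{\ \id\tensor{}\rho_M\ }
	\Hopf\tensor{R}M,
\]
and counitality and coassociativity of this $\psi_M$ follow from unitality and associativity of $\rho_M$ together with the compatibility of $\mathrm{coev}$ with $\Delta$ and $\epsilon$ (dual to the algebra structure $(\mu,\dual{\epsilon})$ on $\dHopf$). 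Since both $\Phi$ and $\Psi$ leave the underlying $R$-module untouched and only reassign the structure map, checking $\Phi\Psi=\id$ and $\Psi\Phi=\id$ reduces to verifying that the two passages $\psi_M\mapsto\rho_M$ and $\rho_M\mapsto\psi_M$ are inverse bijections on structure maps; this is exactly the content of the two zigzag identities for the pair $(\mathrm{ev},\mathrm{coev})$. Both functors commute strictly with the forgetful functors to $\Rmod$ by construction, and because that forgetful functor is exact and $R$-linear, the isomorphism transports the $R$-linear abelian structure of $\Hcomod$ onto that of $\Hmod$.

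I expect the main obstacle to be the bookkeeping of the two distinct $R$-module structures on $\Hopf$ (left via $\iota$, right via $\eta$) and the induced structures on $\dHopf$: one must confirm that $\mathrm{ev}$ and $\mathrm{coev}$ are formed on the correct side, so that each composite lands in the intended module, and that the associativity computation genuinely reproduces the convolution formula $f\tensor{}g\mapsto\braket*{f}{-_{(1)}\cdot\eta\braket{g}{-_{(2)}}}$ defining $\mu$. Once the sidedness is pinned down, the translation of axioms and the zigzag identities are purely formal, and the finite Adams hypothesis is precisely what guarantees the coevaluation exists and the double-dual comparison is an isomorphism.
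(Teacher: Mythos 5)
Your architecture is the same as the paper's: both directions are the identity on underlying $R$-modules, the comodule-to-module functor is built from an evaluation map, the module-to-comodule functor from the coevaluation $1_{(-)}\tensor{}1_{(+)}$ supplied by the finite Adams hypothesis, and mutual inverseness is a zigzag-type check (the paper phrases it as $c\circ c=\id$). The gap is in your formula for $\Phi$. You take the evaluation to be the naive pairing $\braket{-}{-}$, so that $f\cdot m=\braket{f}{m_{[-1]}}\,m_{[0]}$; the paper's evaluation is instead $\braket{-}{c(-)}$, i.e.\ $f\cdot m=\braket*{f}{c(m_{[-1]})}\,m_{[0]}$, with the conjugation inserted. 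This is not optional bookkeeping. In $\Hopf\tensor{R}M$ the tensor is balanced against the \emph{right} $R$-structure on $\Hopf$ (the one induced by $\eta$), so $x\cdot\eta(r)\tensor{}m=x\tensor{}r.m$; but $f\in\dHopf=\Hom_R(\Hopf,R)$ is $R$-linear only for the \emph{left} structure $\iota$, so $\braket{f}{x\cdot\eta(r)}$ need not equal $r\braket{f}{x}$, and your action is simply not well defined on the balanced tensor. Since $c\circ\eta=\iota$ and $c$ is multiplicative, one has $\braket*{f}{c(x\cdot\eta(r))}=r\braket*{f}{c(x)}$, which is exactly what rescues the paper's formula. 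The failure is real in the intended application: for $\dcAR$ the two units genuinely differ ($\eta(\tau)=\tau+\rho\tau_0$), so your recipe only recovers the familiar Hopf-algebra case $\iota=\eta$.

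There is a second, independent defect cured by the same insertion. Even where your expression makes sense, coassociativity gives $f\cdot(g\cdot m)=\braket{g}{m_{[-1](1)}}\braket{f}{m_{[-1](2)}}\,m_{[0]}$, which (comparing with the convolution formula $\mu(f\tensor{}g)=\braket*{f}{-_{(1)}\cdot\eta\braket{g}{-_{(2)}}}$) is the action of $\mu(g\tensor{}f)$, not of $\mu(f\tensor{}g)$: the naive pairing produces an anti-action, i.e.\ a \emph{right} $\dHopf$-module, and $\dHopf$ is noncommutative here (in the application $\dHopf\iso\cAR$). Because the cogroupoid inverse axiom makes $c$ reverse the comultiplication, inserting $c$ restores the correct order. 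So the repair is local — replace your evaluation by $\braket{-}{c(-)}$ and redo the counit and associativity checks using $\epsilon\circ c=\epsilon$, $c\circ\eta=\iota$, and the anti-compatibility of $c$ with $\Delta$ — but as proposed the central map defining $\Phi$ does not exist, and your closing claim that the sidedness issue is ``purely formal'' once $\mathrm{ev}$ is formed ``on the correct side'' is wrong: no choice of sides makes the naive pairing balanced, and the antipode is an extra structural input beyond dualizability, which is precisely the ingredient your argument omits. Your $\Psi$ and the zigzag step, by contrast, agree with the paper's proof.
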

\begin{proof}
	First note that 
	\begin{enumerate}[wide]
		\item There is an $R$-bilinear \emph{evaluation map} $\dHopf\tensor{R}\Hopf\to R$ given by $\braket*{-}{c(-)}$. 
		\item Since $\Hopf$ is finitely generated and projective as an $R$-module, we also have a \emph{unit} $R\to\Hopf\tensor{R}\dHopf$. 
		We will employ the \emph{Sweedler's notation}, denoting the image of $1\in R$ simply as $1_{(-)}\tensor{} 1_{(+)}$.
	\end{enumerate}
	
	Then the isomorphism is given as follows.
	\begin{enumerate}[wide]
		\item Any $\Hopf$-comodule $M$ is equipped with an $\dHopf$-module structure 
		\[
			\dHopf\tensor{R}M\longrightarrow M
			\colon
			f\tensor{} m\longmapsto
			\braket*{f}{c(m_{[-1]})}\tensor{} m_{[0]}.
		\]
		We will denote this $\dHopf$-module by $c(M)$.
		\item Conversely, any $\dHopf$-module $M$ admits an $\Hopf$-comodule structure 
		\[
			M\longrightarrow \Hopf\tensor{R}M
			\colon
			m\longmapsto 1_{(-)}\tensor{} 1_{(+)}.m
		\]
		We will denote this $\Hopf$-comodule by $c(M)$. 
		\item Then it is straightforward to verify that $c\circ c=\id$.\qedhere
	\end{enumerate}
\end{proof}

We are thus able to further define a symmetric monoidal structure on $\Hmod$ by transplanting the one on $\Hcomod$.

\begin{remark} 
The provided isomorphism $c$ also identifies the full exact subcategories $\Hcomodp$ and $\Hmodp$. 
Consequently, it induces an equivalence between $\stab{\Hopf}$ and $\stab{\dHopf}$. 
We will mainly focus on the latter.
\end{remark}
\begin{prop}
	The category $\stab{\dHopf}$ is a symmetric monoidal category.
\end{prop}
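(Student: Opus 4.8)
The plan is to obtain the symmetric monoidal structure on $\stab{\dHopf}$ by descending the one on $\Hmodp$ along the quotient functor. First I recall that $\Hmodp$ is itself symmetric monoidal: via the comodule--module isomorphism $c$ established above it is identified with $\Hcomodp$, and $\Hcomodp$ is a monoidal subcategory of $\Hcomod$ because the tensor product over $R$ of two finitely generated projective $R$-modules is again finitely generated and projective, so $\Hcomodp$ is closed under $\tensor{R}$ and contains the unit $R$. The unit, associator, unitors, and braiding are all inherited from $\Hcomod$, and they are transported to $\Hmodp$ through $c$.

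Since $\stab{\dHopf}$ has the same objects as $\Hmodp$ and its hom-sets are the quotients of those of $\Hmodp$ by the ideal $\mathcal{P}$ of morphisms factoring through a projective, it suffices to show that $\tensor{R}$ descends to the quotient. By additivity this is exactly the statement that $\mathcal{P}$ is a tensor ideal: if $f\colon M\to N$ factors through a projective and $L\in\Hmodp$, then $f\tensor{R}\id_L$ must again factor through a projective. Writing $f=\beta\alpha$ with $\alpha\colon M\to P$, $\beta\colon P\to N$ and $P$ projective, we have $f\tensor{R}\id_L=(\beta\tensor{R}\id_L)(\alpha\tensor{R}\id_L)$, so everything reduces to the claim that $P\tensor{R}L$ is projective for every projective $P$ and every $L$ (note that this object does lie in $\Hmodp$, since $\tensor{R}$ of finitely generated projective $R$-modules is finitely generated projective).

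To prove this claim I would invoke an untwisting (shearing) isomorphism supplied by the Hopf algebroid structure. Every projective object of $\Hmodp$ is a retract of a free module $\dHopf\tensor{R}V$ with $V$ finitely generated projective over $R$, and a retract of a projective is projective, so it is enough to treat $P=\dHopf\tensor{R}V$. The key is a natural isomorphism of $\dHopf$-modules
\[
  (\dHopf\tensor{R}V)\tensor{R}L \iso \dHopf\tensor{R}(V\tensor{R}L),
\]
in which the left-hand side carries the diagonal action of the monoidal structure while the right-hand side is the free module on $V\tensor{R}L$; this exhibits $P\tensor{R}L$ as free, hence projective. It is cleanest to build this map on the comodule side, where it is the projection formula for the adjunction between the forgetful functor $\Hcomodp\to\Rmod$, which is strong symmetric monoidal, and its right adjoint $\Hopf\tensor{R}-$. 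Because every object of $\Hcomodp$ is dualizable (being finitely generated and projective over $R$), the canonical projection-formula comparison map is an isomorphism, and unwinding it gives a shearing map assembled from the coaction of $L$ and the conjugation $c$, whose comodule-linearity follows from coassociativity and the counit axiom.

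Once $\mathcal{P}$ is known to be a tensor ideal, the bifunctor $\tensor{R}$ together with the associator, unitors, and braiding of $\Hmodp$ descend to $\stab{\dHopf}$; being genuine isomorphisms in $\Hmodp$ that satisfy the pentagon and hexagon identities there, their images remain isomorphisms satisfying those same identities in the quotient, and $R$ continues to serve as the unit. I expect the main obstacle to be the precise construction and verification of the untwisting isomorphism: in contrast with a cocommutative Hopf algebra over a field, one must keep careful track of the two distinct $R$-module structures on $\Hopf$ coming from $\iota$ and $\eta$, together with the fact that $c$ is an anti-isomorphism interchanging them, in order to confirm that the shearing map is simultaneously $\dHopf$-linear and correctly $R$-balanced.
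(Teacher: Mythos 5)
Your proof is correct and takes essentially the same route as the paper, whose one-line proof invokes ``the standard arguments on stable module category using the fact that the tensor product preserves the projectives'': your tensor-ideal reduction and the claim that $P\tensor{R}L$ is projective are exactly what that sentence elides. The untwisting isomorphism you propose is precisely the paper's subsequent lemma comparing the monoidal and extended module structures on $\dHopf\tensor{R}M$, proved there via the shearing bijection $x\tensor{}m\mapsto x\cdot c(m_{[-1]})\tensor{}m_{[0]}$ (citing Hovey's Lemma 1.1.5), together with its corollary that extended modules on finite free $R$-modules are free.
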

\begin{proof}
	This follows from the standard arguments on stable module category using the fact that the tensor product preserves the projectives.
\end{proof}

\subsection{Projective and injective objects}
The following lemmas tell us what are the projective/injective objects in the exact category we are interested in.
\begin{lem}\label{lem:Aproj-Rproj}
	If $M$ is a finitely generated projective $\dHopf$-module, then it is a finitely generated projective $R$-module.
\end{lem}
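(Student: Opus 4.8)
The plan is to reduce the statement to the single fact that $\dHopf$, viewed as an $R$-module by restriction of scalars along its unit, is finitely generated and projective over $R$; the passage from $\dHopf$ to an arbitrary projective is then purely formal. Recall that the induced $R$-algebra structure on $\dHopf$ has unit $\dual{\epsilon}\colon R\to\dHopf$, so the $R$-module structure on any $\dHopf$-module (in particular the one in the statement) is the one obtained by restricting along $\dual{\epsilon}$. This restriction-of-scalars functor $\modcat{\dHopf}\to\Rmod$ is additive, commutes with finite direct sums, and preserves retracts. Since $M$ is a finitely generated projective $\dHopf$-module, it is a direct summand of a finitely generated free module $\dHopf^{n}$ over $\dHopf$, hence a direct summand of $\dHopf^{n}$ over $R$ after restriction. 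As a direct summand of a finitely generated projective $R$-module is again finitely generated projective, it suffices to treat $\dHopf$ itself.

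For this I would invoke the finite Adams hypothesis: $\Hopf$ is finitely generated and projective as a left $R$-module. The weak dual functor $\dual{(-)}=\Hom_R(-,R)$ carries finitely generated projective $R$-modules to finitely generated projective $R$-modules, so $\dHopf=\dual{\Hopf}$ is a finitely generated projective $R$-module for its natural dual structure $(r\cdot f)(x)=r\braket{f}{x}$. What remains is to check that this natural structure coincides with the one obtained by restricting the left regular $\dHopf$-action along $\dual{\epsilon}$, namely the action $r\cdot f=\mu(\dual{\epsilon}(r)\tensor{}f)$.

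This identification is the only genuine computation and the main point to get right, since it is where the two unit maps $\iota,\eta$ and the counit axioms enter. Unwinding the multiplication formula with $\braket{\dual{\epsilon}(r)}{y}=r\,\epsilon(y)$, the functional $\mu(\dual{\epsilon}(r)\tensor{}f)$ sends $x$ to $r\,\epsilon\bigl(x_{(1)}\cdot\eta\braket{f}{x_{(2)}}\bigr)$; using that $\epsilon$ is a $\K$-algebra map with $\epsilon\eta=\id$, this equals $r\,\epsilon(x_{(1)})\braket{f}{x_{(2)}}$, and the counit identity $\iota(\epsilon(x_{(1)}))\cdot x_{(2)}=x$ together with the left $R$-linearity of $f$ collapses it to $r\braket{f}{x}$. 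Thus the restricted action agrees with the natural dual structure, $\dHopf$ is finitely generated projective over $R$, and the lemma follows from the summand argument of the first paragraph. I expect the only delicacy to be the careful tracking of the left and right $R$-module structures throughout these manipulations, all of which is controlled by the Hopf algebroid axioms already recorded above.
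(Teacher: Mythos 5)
Your proof is correct and takes essentially the same route as the paper's: exhibit $M$ as a retract of a finitely generated free $\dHopf$-module and observe that $\dHopf=\dual{\Hopf}$ is itself finitely generated projective over $R$, which follows from the finite Adams hypothesis because weak dualization $\Hom_R(-,R)$ preserves finitely generated projectives. Your additional computation identifying the $R$-action restricted along $\dual{\epsilon}$ with the natural dual structure on $\dual{\Hopf}$ is accurate, but it only unwinds the unit axiom of the $R$-algebra structure $(\mu,\dual{\epsilon})$, which the paper (reasonably) takes as already established.
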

\begin{proof}
	A finitely generated projective $\dHopf$-module $M$ is a summand of a finitely generated free $\dHopf$-module, which is finitely generated and projective over $R$ since $\dHopf$ itself is so. Therefore,  $M$ is a finitely generated projective $R$-module.
\end{proof}
In particular, all finitely generated projective $\dHopf$-modules are contained in the subcategory $\Hmodp$ and coincide with its projective objects. 
Similar statement for injective objects fails. 
However, we have
\begin{lem}\label{lem:Ainj}
	If $M$ is an injective object in $\Hcomodp$, then it is an injective $\Hopf$-comodule. 
\end{lem}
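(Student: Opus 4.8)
The plan is to realize every injective object of $\Hcomodp$ as a retract of a cofree $\Hopf$-comodule, and then to check that cofree comodules are themselves injective. Write $E(N)\coloneq\Hopf\tensor{R}N$ for the \emph{cofree} (extended) comodule on an $R$-module $N$, whose coaction is $\Delta\tensor{}\id_N$. I will use two standard inputs: (i) the cofree functor $E$ is right adjoint to the forgetful functor $\Hcomod\to\Rmod$, with unit the coaction $\psi_X\colon X\to E(X)$ and counit $\epsilon\tensor{}\id$, giving $\Hom_{\Hcomod}[X,E(N)]\iso\Hom_{R}[X,N]$; and (ii) an admissible monomorphism of $\Hcomodp$ is automatically $R$-split, since its cokernel is a finitely generated projective $R$-module.

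First I would embed $M$ into a cofree comodule. Coassociativity of the coaction is exactly the assertion that $\psi_M\colon M\to E(M)$ is a morphism of $\Hopf$-comodules, and the counit identity $(\epsilon\tensor{}\id)\circ\psi_M=\id_M$ exhibits it as an $R$-split monomorphism. Since $M$ and $\Hopf$ are finitely generated projective over $R$, so is $E(M)=\Hopf\tensor{R}M$; and as $\psi_M$ is $R$-split its cokernel is finitely generated projective over $R$ as well, while carrying a comodule structure as the cokernel of a comodule map. Thus $\psi_M$ is an admissible monomorphism of the exact category $\Hcomodp$, with both target and cokernel lying in $\Hcomodp$.

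Next I would split it. Because $M$ is injective in $\Hcomodp$, the identity $\id_M$ extends along the admissible monomorphism $\psi_M$ to a comodule map $r\colon E(M)\to M$ with $r\circ\psi_M=\id_M$; hence $M$ is a direct summand of the cofree comodule $E(M)$ in $\Hcomod$. It then suffices to prove that cofree comodules are injective $\Hopf$-comodules, since a retract of an injective is injective. Given an $R$-split comodule monomorphism $\iota\colon A\to B$ and a comodule map $g\colon A\to E(N)$, the adjunction (i) sends $g$ to the $R$-linear map $\bar g=(\epsilon\tensor{}\id)\circ g$; choosing an $R$-linear splitting $s$ of $\iota$ and transporting $\bar g\circ s\colon B\to N$ back across the adjunction yields a comodule map $B\to E(N)$ restricting to $g$ along $\iota$. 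Therefore $E(N)$, and with it its summand $M$, is an injective $\Hopf$-comodule.

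The crux is conceptual rather than computational. Injectivity in $\Hcomodp$ only supplies extensions along those $R$-split monomorphisms whose cokernel is again $R$-projective, whereas injectivity as an $\Hopf$-comodule concerns the wider class of $R$-split comodule monomorphisms; the passage between the two is precisely what recognizing $M$ as a retract of a cofree comodule buys us, because the adjunction (i) reduces the injectivity of $E(N)$ to the $R$-splitness of the monomorphism alone, irrespective of the cokernel. It is worth emphasizing that the relevant notion of injective $\Hopf$-comodule is the one relative to $R$-split monomorphisms: against arbitrary comodule monomorphisms a cofree comodule $E(N)$ is injective only when $N$ is injective over $R$, which a finitely generated projective $R$-module need not be.
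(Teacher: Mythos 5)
Your proof is correct, and it takes a genuinely different route from the paper's. The paper's proof is a two-line reduction: it reads ``injective $\Hopf$-comodule'' as exactness of $\Hom_{\Hopf}[-,M]$ on the whole abelian category of $\Hopf$-comodules, invokes Hovey's Proposition~1.4.4 that $\Hcomodp$ generates that category, and concludes that exactness need only be tested on $\Hcomodp$, which is the hypothesis. You instead exhibit $M$ as a retract of the cofree comodule $\Hopf\tensor{R}M$: the coaction $\psi_M$ is a comodule map by coassociativity, is $R$-split by the counit identity, and has cokernel again finitely generated projective over $R$, so it is an admissible monomorphism of $\Hcomodp$; exact-category injectivity of $M$ splits it, and the forgetful--cofree adjunction then shows that cofree comodules, hence their retracts, extend maps along arbitrary $R$-split comodule monomorphisms. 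All of these steps check out (including the verification that the extension restricts correctly, which follows from naturality of the adjunction bijection). What your route buys is that it is self-contained, makes explicit the class of monomorphisms against which injectivity is obtained, and recovers the classical Hopf-algebroid characterization of relative injectives as retracts of extended comodules; the paper's route is shorter but leans entirely on the generation result.

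The substantive difference is the one your closing paragraph flags: the two arguments do not prove the same assertion, and yours proves the tenable one. Generation of the comodule category by $\Hcomodp$ does not permit testing injectivity only on admissible monomorphisms of $\Hcomodp$ --- generation controls covers by objects of the subcategory, not extensions along arbitrary monomorphisms --- so the paper's reduction step is a genuine leap under its own absolute reading of ``injective.'' Indeed the absolute statement fails in the stated generality: for the trivial finite Adams Hopf algebroid $(\Hopf,R)=(R,R)$ with $R=\F_2[\tau,\rho]$, comodules are just $R$-modules, every object of $\Hcomodp$ (the split-exact category of finitely generated projective $R$-modules) is injective in the exact category, yet no nonzero finitely generated projective $R$-module is an injective $R$-module since $R$ is not self-injective (the map $(\tau)\to R$, $\tau\mapsto 1$, does not extend over $(\tau)\hookrightarrow R$). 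So the lemma can only mean injectivity relative to $R$-split monomorphisms, which is exactly what you prove, and your caveat that $\Hopf\tensor{R}N$ is absolutely injective only when $N$ is $R$-injective pinpoints why the stronger reading is out of reach. In short, your argument is not merely correct but more careful than the paper's at the one point where the paper's proof is too quick.
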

\begin{proof}
	An $\Hopf$-comodule $M$ is injective if and only if the functor $\Hom_{\Hopf}[-,M]$ is exact on the abelian category of $\Hopf$-comodules. 
	But \cite{hovey2003homotopy}*{Proposition 1.4.4} tells us that $\Hcomodp$ generates this category. 
	Hence, it suffices to verify if $\Hom_{\Hopf}[-,M]$ is exact on the exact category $\Hcomodp$, which is precisely the definition of injective objects in $\Hcomodp$.
\end{proof}

\subsection{(Stable) Picard group}
The \emph{Picard group} of a symmetric monoidal category is the group of invertible isomorphism classes.
\begin{defn}\label{defn:Pic}
	The \emph{(stable) Picard group} of $\Hopf$ (or of $\dHopf$) is the Picard group of the symmetric monoidal category $\stab{\dHopf}$.
	We will use $\Pic(\Hopf)$ or $\Pic(\dHopf)$ to denote this group.
\end{defn}
Two $\dHopf$-modules $M$, $N$ are \emph{stably equivalent} if $M\oplus P\cong N\oplus Q$ for some projectives $P$ and $Q$. If $M\tensor{R}N$ is stably equivalent to $R$, then we say $M$ and $N$ are \emph{stably invertible}. 
Clearly, two objects in $\Hmodp$ are stably equivalent if and only if they are isomorphic in $\stab{\dHopf}$.
In particular, an object $M$ in $\stab{\dHopf}$ is invertible if and only if it is stably invertible.

\begin{remark}
	In the \cref{defn:Pic}, we specifically define the Picard group using objects in $\Hmodp$. 
	To justify this choice, let's consider $\dHopf$-modules $M$ and $N$ such that 
	$M\tensor{R}N$ is stably equivalent to $R$. 
	In this case, $M\tensor{R}N$ becomes a nonzero finitely generated projective $\dHopf$-module, and hence, by \cref{lem:Aproj-Rproj}, a nonzero finitely generated projective $R$-module. 
	Then the factors $M$ and $N$ have to be nonzero finitely generated projective $R$-modules.
\end{remark}

The rest of this section devotes to give the ordinary part of the Picard group from a study of the category $\stab{\dHopf}$.

\subsection{Dual (co)modules}\label{dualization}
Let's first show that
\begin{prop}\label{prop:RigidityOfHmodp}
	The symmetric monoidal category $\Hmodp$ is \emph{left rigid}: any object in it admits a left dual.
\end{prop}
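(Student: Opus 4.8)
The plan is to construct the left dual explicitly and then verify the zig–zag identities. Since the comodule–module isomorphism $c$ of \cref{comodule=module} is monoidal and identifies $\Hcomodp$ with $\Hmodp$, it suffices to produce a left dual inside $\Hcomodp$ and transport it along $c$. Fix $M\in\Hcomodp$. By \cref{lem:Aproj-Rproj} its underlying object is a finitely generated projective $R$-module, so the weak dual $\dual{M}=\Hom_R(M,R)$ is again finitely generated and projective, and there is a finite dual basis $\{e_i\}\subset M$, $\{\dual{e}_i\}\subset\dual{M}$ with $\sum_i e_i\,\braket{\dual{e}_i}{-}=\id_M$. This dual basis furnishes the $R$-linear evaluation $\mathrm{ev}\colon\dual{M}\tensor{R}M\to R$, $f\tensor{}m\mapsto\braket{f}{m}$, and coevaluation $\mathrm{coev}\colon R\to M\tensor{R}\dual{M}$, $1\mapsto\sum_i e_i\tensor{}\dual{e}_i$, and these already satisfy the two triangle identities in $\Rmod$.

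Next I would endow $\dual{M}$ with a comodule structure making $\mathrm{ev}$ and $\mathrm{coev}$ comodule maps. The natural candidate, mirroring the antipode formula for comodules over a Hopf algebra, is
\[
	\psi_{\dual{M}}(f)\;=\;\sum_i c\bigl((e_i)_{[-1]}\bigr)\tensor{}\braket{f}{(e_i)_{[0]}}\,\dual{e}_i,
\]
where $c$ is the conjugation. Here $c$ plays the role of the antipode: because it is an anti-isomorphism interchanging the left and right $R$-module structures on $\Hopf$, it is exactly what converts a left coaction on $M$ into a well-defined left coaction on $\dual{M}$. First I would check that this formula is independent of the chosen dual basis and $R$-linear for the correct module structure on $\dual{M}$ (the extra right structure described in the opening remark), and then verify the counit and coassociativity axioms. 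The latter is where the Hopf-algebroid structure maps enter substantively: one invokes the counit axiom together with the antipode-type identities $c(h_{(1)})\,h_{(2)}=(\iota\epsilon)(h)$ and $h_{(1)}\,c(h_{(2)})=(\eta\epsilon)(h)$ for the conjugation (with the source/target units assigned according to the conventions of \cite{ravenel2023complex}*{A1}).

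With the coaction in hand, it remains to confirm that $\mathrm{ev}$ and $\mathrm{coev}$ are morphisms in $\Hcomodp$, i.e.\ that they intertwine the coactions; this is a direct computation from the displayed formula and the definition of the tensor coaction $\psi_{M\tensor{R}N}$. Since the forgetful functor $\Hcomodp\to\Rmod$ is faithful and strong monoidal, and the triangle identities hold after applying it, they hold in $\Hcomodp$ as well; transporting the resulting left dual along $c$ produces the left dual in $\Hmodp$. I expect the main obstacle to be precisely the bookkeeping of the two distinct $R$-module structures on $\Hopf$ and their interaction with $c$: over a genuine Hopf algebroid the conjugation is only an anti-isomorphism between the source and target structures, so each tensor factor and each pairing must be placed over the correct structure for the formula above to be well defined and for coassociativity to close up. Once that bookkeeping is pinned down, the remaining verifications are routine.
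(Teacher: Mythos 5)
Your overall architecture---transporting along $c$, producing evaluation and coevaluation from a finite dual basis, and checking the triangle identities through the faithful strong monoidal forgetful functor---is sound and parallels the paper's. But the one verification you defer is exactly where the argument breaks: the displayed coaction on $\dual{M}$ is not well defined over a genuine Hopf algebroid. Under the paper's conventions, the coaction of a left $\Hopf$-comodule lands in $\Hopf\tensor{R}M$ with the \emph{right} ($\eta$) structure on the $\Hopf$-leg, so representatives of $\psi_M(e_i)$ satisfy $x\cdot\eta(r)\tensor{}m=x\tensor{}rm$. Since $c\circ\eta=\iota$ (and $\Hopf$ is commutative), your formula sends these two representatives to
\[
	c(x)\,\iota(r)\tensor{}\braket{f}{m}\,\dual{e}_i
	\qquad\text{and}\qquad
	c(x)\tensor{}r\,\braket{f}{m}\,\dual{e}_i
	\;=\;c(x)\,\eta(r)\tensor{}\braket{f}{m}\,\dual{e}_i,
\]
which differ in general. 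In other words, applying $c$ to the $\Hopf$-leg converts $\eta$-balancing into $\iota$-balancing, so your formula only defines a map into the tensor product balanced via $\iota$ on the first factor---the wrong target for a coaction---and no placement of the two $R$-structures on the remaining factors can repair this, because the mismatch sits on the $\Hopf$-leg itself, where $\iota(r)$ and $\eta(r)$ act by multiplication by genuinely different elements. Your formula is correct precisely when $\iota=\eta$, i.e.\ in the Hopf \emph{algebra} case; for $\dcAR$ the two units differ, which is the entire point of the paper's setting. (Your antipode identities also have $\iota$ and $\eta$ interchanged relative to \cite{ravenel2023complex}*{A1.1.1}, but that is the minor convention issue you hedged.)

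This failure is exactly why the paper's remark advertises a ``roundabout, yet explicit'' approach through $\dHopf$-modules. In \ref{dualization} the paper never twists a coaction by the conjugation: the dual of a comodule is made an $\dHopf$-module by the $c$-free formula $f\tensor{}g\mapsto\braket*{f}{-_{[-1]}\cdot\eta\braket{g}{-_{[0]}}}$, and the dual of a module $(M,\rho_M)$ acquires its coaction by dualizing $\rho_M$ and inverting the canonical map $\Hopf\tensor{R}\dual{M}\to\dual{(\dHopf\tensor{R}M)}$, which is an isomorphism exactly when $M$ is finitely generated and projective over $R$. The conjugation enters only once, through the comodule--module isomorphism of \ref{comodule=module}, yielding $D$ and hence the left dual $DM$. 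So the correct coaction on $\dual{M}$ does exist, but it is obtained from this canonical isomorphism, not from the naive antipode formula. Your evaluation/coevaluation maps and the faithful-forgetful argument for the zig-zag identities would go through once you substitute that construction; as written, however, the object at the heart of your proof does not exist.
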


For an $\Hopf$-comodule $M$, its $R$-dual $\dual{M}$ naturally carries an $\dHopf$-module structure:
\[
	\dHopf\tensor{R}\dual{M}
	\longrightarrow\dual{M}\colon
	f\tensor{} g
	\longmapsto
	\braket*{f}{-_{[-1]}\cdot\eta\braket{g}{-_{[0]}}}.
\]
Conversely, let $(M,\rho_M)$ be an $\dHopf$-module. Then the canonical map 
\[
	\Hopf\tensor{R}\dual{M}
		\longrightarrow\dual{(\dHopf\tensor{R}M)}\colon
	x\tensor{} g 
	\longmapsto
	\braket*{-}{x\cdot\eta\braket*{g}{-}}
\]
is an isomorphism if and only if $M$ is finitely generated and projective as an $R$-module.
If this is the case, the dual $\dual{M}$ carries an $\Hopf$-comodule structure 
$\psi_{\dual{M}}\colon\dual{M}\to\Hopf\tensor{R}\dual{M}$ 
given by composing $\dual{\rho_M}$ with the inverse of the above map.
One can then verify that the above constructions form a monoidal anti-equivalence between the exact categories $\Hcomodp$ and $\Hmodp$. 

\begin{defn}
	Let $D$ denote the $R$-linear functor 
	\[
		D\colon \opp{\Hmodp} \longrightarrow \Hmodp
	\]
	obtained as the composition of $\dual{}$ and $c$.
\end{defn}

\begin{proof}[Proof of \cref{prop:RigidityOfHmodp}]
	Recall that for a finitely generated projective $R$-module $M$, its left dual is given by the weak dual $\dual{M}$. 
	Then the discussion in \ref{comodule=module} and the above show that any object $M$ in $\Hmodp$ admits a left dual given by $DM$.
\end{proof}

\subsection{Extended (co)modules}
To induce a functor on $\stab{\dHopf}$ from $D$, we need the following notions.
\begin{defn}
	Given an $R$-module $M$, the tensor product $\dHopf\tensor{R}M$ (resp. $\Hopf\tensor{R}M$) naturally carries an $\dHopf$-module (resp. $\Hopf$-comodule) structure using the (co)multiplication of $\dHopf$ (resp. $\Hopf$). 
	This (co)module is called the \emph{extended (co)module} on $M$.
\end{defn}

If $M$ is already an $\dHopf$-module (resp. $\Hopf$-comodule), then we have \emph{a priori} two $\dHopf$-module (resp. $\Hopf$-comodule) structures on $\dHopf\tensor{R}M$ (resp. $\Hopf\tensor{R}M$): one from the monoidal structure of $\Hmod$ (resp. $\Hcomod$), and another from the extended (co)module construction. 
It turns out that 
\begin{lem}
	The two (co)module structures on $\dHopf\tensor{R}M$ (resp. $\Hopf\tensor{R}M$) are isomorphic.
\end{lem}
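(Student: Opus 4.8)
The plan is to prove the comodule assertion explicitly and read off the module one as its formal dual. Fix an $\Hopf$-comodule $M$ with coaction $\psi_M(m)=m_{[-1]}\tensor{} m_{[0]}$. On $\Hopf\tensor{R}M$ the extended structure is
\[
	\psi_{\rm ext}(x\tensor{} m)=x_{(1)}\tensor{} x_{(2)}\tensor{} m,
\]
while the tensor-product structure, regarding $\Hopf$ as a comodule over itself via $\Delta$, is
\[
	\psi_{\otimes}(x\tensor{} m)=x_{(1)}m_{[-1]}\tensor{} x_{(2)}\tensor{} m_{[0]}.
\]
I would exhibit an explicit isomorphism $\theta\colon(\Hopf\tensor{R}M,\psi_{\otimes})\to(\Hopf\tensor{R}M,\psi_{\rm ext})$ rather than invoke an abstract comparison, since a closed formula also makes compatibility with the $R$-module structures transparent.

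The map I would use is the classical untwisting map together with its candidate inverse,
\[
	\theta(x\tensor{} m)=x\cdot m_{[-1]}\tensor{} m_{[0]},
	\qquad
	\theta^{-1}(x\tensor{} m)=x\cdot c(m_{[-1]})\tensor{} m_{[0]},
\]
where $\cdot$ is the multiplication of $\Hopf$ and $c$ is the conjugation. First I would check that $\theta$ is a morphism of comodules, i.e. $\psi_{\rm ext}\circ\theta=(\id\tensor{}\theta)\circ\psi_{\otimes}$. Expanding both sides in Sweedler notation, this reduces to the two facts that $\Delta$ is $\K$-algebraic, hence multiplicative, giving $\Delta(x\, m_{[-1]})=x_{(1)}(m_{[-1]})_{(1)}\tensor{} x_{(2)}(m_{[-1]})_{(2)}$, and the coassociativity identity $(m_{[-1]})_{(1)}\tensor{}(m_{[-1]})_{(2)}\tensor{} m_{[0]}=m_{[-1]}\tensor{}(m_{[0]})_{[-1]}\tensor{}(m_{[0]})_{[0]}$; after substitution the two expressions coincide term by term. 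Notably, no antipode is needed for this half.

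Next I would verify $\theta\circ\theta^{-1}=\id=\theta^{-1}\circ\theta$. Applying $\theta$ to $\theta^{-1}(x\tensor{} m)$ and using coassociativity collapses the $M$-coaction into $x\,c((m_{[-1]})_{(1)})(m_{[-1]})_{(2)}\tensor{} m_{[0]}$; the antipode axiom $c(y_{(1)})y_{(2)}=\iota\epsilon(y)$ then turns the left factor into $x\cdot\iota\epsilon(m_{[-1]})$, which, being a scalar, slides across $\tensor{R}$, whereupon the counit axiom $\epsilon(m_{[-1]})\,m_{[0]}=m$ yields $x\tensor{} m$; the reverse composite is symmetric, using the companion antipode identity. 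Throughout I would keep track of the left and right $R$-actions: $\theta$ must be checked to be well defined on $\Hopf\tensor{R}M$ and $R$-linear, which is precisely where the Hopf-algebroid structure enters, since $c$ interchanges the source and target units $\iota$ and $\eta$ and the target $\Hopf\tensor{r,l}\Hopf$ of $\Delta$ mixes the two $R$-module structures.

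Finally, the module assertion is the formal dual of the above: on $\dHopf\tensor{R}M$ the extended structure is left multiplication on the first factor, the tensor-product structure is the diagonal action through the comultiplication of $\dHopf$ (dual to the multiplication of $\Hopf$), and the same computation with $f_{(1)}\tensor{} f_{(2)}m$ in place of $x\,m_{[-1]}\tensor{} m_{[0]}$ — now reading the roles of product and coproduct, and of $c$, dually — proves it; alternatively one transports $\theta$ through the monoidal identification of \cref{comodule=module}. I expect the only genuine obstacle to be the algebroid bookkeeping of the two $R$-module structures and of the unit placements forced by $c$: the algebraic heart is the standard untwisting isomorphism for Hopf modules, and once the source/target conventions are pinned down the Sweedler manipulations are routine.
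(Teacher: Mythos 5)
Your overall strategy is the paper's: the paper proves the comodule case by exhibiting exactly the pair of bijections you propose, namely $x\tensor{} m\mapsto x_{(1)}m_{[-1]}\tensor{}\epsilon(x_{(2)}).m_{[0]}$ and $x\tensor{} m\mapsto x\cdot c(m_{[-1]})\tensor{} m_{[0]}$, citing Hovey's Lemma~1.1.5, after the same reduction ``it suffices to treat comodules.'' Note that your forward map $\theta(x\tensor{} m)=x\,m_{[-1]}\tensor{} m_{[0]}$ agrees with the paper's first formula: in a Hopf algebroid the counit axiom reads $x_{(1)}\eta(\epsilon(x_{(2)}))=x$, and the extended comodule's tensor product is formed through the right unit $\eta$, so the correction factor $\epsilon(x_{(2)})$ can be absorbed. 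Your verification that $\theta$ is a comodule map via multiplicativity of $\Delta$ and coassociativity is the standard computation and is correct.

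There is, however, one genuine slip, sitting exactly at the algebroid-specific point you dismissed as routine bookkeeping. The conjugation identities in a Hopf algebroid are $y_{(1)}c(y_{(2)})=\iota\epsilon(y)$ and $c(y_{(1)})y_{(2)}=\eta\epsilon(y)$: the two composites produce \emph{different} units. In computing $\theta\circ\theta^{-1}$ you invoked $c(y_{(1)})y_{(2)}=\iota\epsilon(y)$ (wrong unit) and then argued that the resulting factor, ``being a scalar, slides across $\tensor{R}$.'' Over a genuine Hopf algebroid no such sliding of an arbitrary scalar is available: in the extended (co)module the tensor is formed via $\eta$, so precisely elements $\eta(r)$ slide ($x\eta(r)\tensor{} m=x\tensor{} rm$), whereas in the diagonal tensor-product comodule it is $\iota(r)$ that slides. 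With $\iota\epsilon(m_{[-1]})$ your composite on the extended side would be stuck; with the correct $\eta\epsilon(m_{[-1]})$ it slides and the counit finishes the argument. Dually, $\theta^{-1}\circ\theta$ produces $\iota\epsilon(m_{[-1]})$ on the diagonal side, where $\iota$ is the unit that slides. So your proof is repaired by swapping the two unit placements; as written, the key cancellation is justified by a Hopf-algebra fact ($\iota=\eta$) that fails in precisely the setting this lemma is about.
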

\begin{proof}
	It is sufficient to prove this for $\Hopf$-comodules.
	Which follows by considering the bijections
	\[
		x\tensor{} m\longmapsto x_{(1)}m_{[-1]}\tensor{} \epsilon(x_{(2)}).m_{[0]},\qquad
		x\tensor{} m\longmapsto
		x\cdot c(m_{[-1]})\tensor{} m_{[0]}
	\]
	as in \cite{hovey2003homotopy}*{Lemma 1.1.5}.
\end{proof}
\begin{cor}\label{cor:Ext->Free}
	If $F$ is a free $R$-module of finite rank, then the tensor product $\dHopf\tensor{R}F$ (resp. $\Hopf\tensor{R}F$) is a (co)free $\dHopf$-module (resp. $\Hopf$-comodule).
\end{cor}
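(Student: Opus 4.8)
The plan is to reduce both statements to the rank-one case and then match the extended (co)module structure against the defining (co)free object, so that the corollary becomes the observation that the extended (co)module functor on free $R$-modules is the (co)free functor. First I would note that the assignments $M\mapsto\dHopf\tensor{R}M$ and $M\mapsto\Hopf\tensor{R}M$, each equipped with its extended (co)module structure, are additive functors $\Rmod\to\Hmod$ and $\Rmod\to\Hcomod$ commuting with finite direct sums. This is because $-\tensor{R}-$ preserves finite direct sums in each variable, while the extended structure acts only on the $\dHopf$- (resp.\ $\Hopf$-)factor through the (co)multiplication and leaves the $M$-factor untouched; hence the canonical $R$-module isomorphism $\dHopf\tensor{R}(V\oplus W)\iso(\dHopf\tensor{R}V)\oplus(\dHopf\tensor{R}W)$ is already an isomorphism of extended modules, and likewise for comodules. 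Writing $F\iso R^{\oplus n}$, it therefore suffices to treat $F=R$.

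Next I would identify the rank-one case. For the module statement, the canonical $R$-module isomorphism $\dHopf\tensor{R}R\iso\dHopf$ carries the extended module structure---which is left multiplication on the $\dHopf$-factor via $\mu$---onto the regular left $\dHopf$-module structure on $\dHopf$; thus $\dHopf\tensor{R}R$ is the free $\dHopf$-module of rank one, and by the first step $\dHopf\tensor{R}F\iso\dHopf^{\oplus n}$ is free of rank $n$. For the comodule statement, the cofree $\Hopf$-comodule on an $R$-module $V$ is by definition $\Hopf\tensor{R}V$ with coaction induced by the comultiplication $\Delta$, which is exactly the extended comodule structure; hence $\Hopf\tensor{R}R$ is cofree of rank one and $\Hopf\tensor{R}F$ is cofree of rank $n$. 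Alternatively, for the comodule half one can regard $R$ as the unit comodule via $\iota$ and invoke the preceding lemma, which identifies the extended comodule structure on $\Hopf\tensor{R}R$ with the one coming from the monoidal product, giving the same conclusion.

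I do not expect a genuine obstacle here, since the content is essentially definitional once the two reductions are in place. The only point that requires care is the bookkeeping of the two $R$-module structures on $\Hopf$ and $\dHopf$ (the \emph{left} one via $\iota$ and the \emph{right} one via $\eta$): one must check that $\dHopf\tensor{R}R$ is formed along the structure for which left multiplication on the $\dHopf$-factor remains $R$-linear, so that the identification with the regular $\dHopf$-module is legitimate, and symmetrically that $\Hopf\tensor{R}R\iso\Hopf$ is compatible with the coaction. Finally, the finiteness hypothesis on $F$ is used only to guarantee that the resulting (co)free object has finite rank and hence lands in $\Hmodp$ (resp.\ $\Hcomodp$), so that it is a legitimate object of the exact categories under consideration.
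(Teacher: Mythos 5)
Your proposal is correct and takes essentially the same approach as the paper: the paper's one-line proof simply asserts that the extended (co)module $\dHopf\tensor{R}F$ (resp.\ $\Hopf\tensor{R}F$) on a free $R$-module is clearly (co)free, relying on the immediately preceding lemma to identify this with the monoidal-product structure when needed. You have merely unpacked the ``clear'' step into the finite-direct-sum reduction and the rank-one identifications $\dHopf\tensor{R}R\iso\dHopf$ and $\Hopf\tensor{R}R\iso\Hopf$, with appropriate attention to the left and right $R$-module structures, which is a faithful elaboration rather than a different argument.
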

\begin{proof}
	It is clear that the extended (co)module $\dHopf\tensor{R}F$ (resp. $\Hopf\tensor{R}F$) is (co)free. Then the statement follows.
\end{proof}

\subsection{Rigidity of $\stab{\dHopf}$}
We are now able to prove the following:
\begin{thm}\label{thm:DH-proj}
	Suppose $\Hopf$ is free over $R$. 
	Then $D\dHopf$ is a projective $\dHopf$-module.
\end{thm}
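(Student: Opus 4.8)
The plan is to pin down $D\dHopf$ explicitly and recognize it as a coinduced (hence injective) module that, thanks to the freeness hypothesis, happens to be free. First I would unwind $D=c\circ\dual{}$, so that $D\dHopf=c(\dual{\dHopf})$, where $\dHopf$ is regarded as the free rank-one object of $\Hmodp$. Since $\Hopf$ is finitely generated and projective over $R$, the canonical map $\Hopf\to\dual{\dHopf}$ is an isomorphism of $R$-modules (reflexivity needs only projectivity). Dualizing the regular module structure $\mu$ on $\dHopf$ — which was defined precisely as the $R$-linear dual of the comultiplication $\Delta$ — one checks that the $\Hopf$-comodule structure induced on $\dual{\dHopf}$ is the coregular coaction, with the conjugation twist already absorbed into the definition of $D$. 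Thus $\dual{\dHopf}\cong\Hopf$ as the cofree $\Hopf$-comodule $\Hopf\tensor{R}R$ on the unit.

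Next I would transport this across the comodule–module isomorphism. Its defining triangle of forgetful functors to $\Rmod$ commutes, so $c$ is compatible with both forgetful functors and therefore carries the right adjoint of the comodule forgetful functor to the right adjoint of the module forgetful functor; that is, it matches the cofree functor $\Hopf\tensor{R}-$ with the coinduction functor $\Hom_{R}(\dHopf,-)$. Evaluating at the unit gives
\[
	D\dHopf \;=\; c(\dual{\dHopf}) \;\cong\; c\bigl(\Hopf\tensor{R}R\bigr) \;\cong\; \Hom_{R}(\dHopf,R).
\]
The same conclusion can be reached formally: $\dual{}$ is a monoidal anti-equivalence, so it sends the projective object $\dHopf$ to an injective object of $\Hcomodp$, and $c$ preserves injectivity, so by \cref{lem:Ainj} the module $D\dHopf$ is the coinduced, hence \emph{injective}, $\dHopf$-module $\Hom_{R}(\dHopf,R)$. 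It remains to upgrade this injective module to a projective one.

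The crux is therefore to show that $\dHopf$ is a \emph{Frobenius} $R$-algebra, i.e. that the coinduced module $\Hom_{R}(\dHopf,R)$ is isomorphic to the free module $\dHopf$. This is exactly where freeness of $\Hopf$ over $R$, rather than mere projectivity, is used: I would produce a nondegenerate Frobenius pairing from a nonzero \emph{integral}, built as the Hopf-algebroid analogue of the Larson--Sweedler theorem, with freeness of $\Hopf$ guaranteeing that the relevant module of integrals is free of rank one so that the pairing is nondegenerate. Given such an isomorphism $\Hom_{R}(\dHopf,R)\cong\dHopf$, the module $D\dHopf$ is free, in particular a finitely generated projective $\dHopf$-module, and hence a projective object of $\Hmodp$ by \cref{lem:Aproj-Rproj}. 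I expect the integral/Frobenius step to be the main obstacle; everything preceding it is formal manipulation of the two adjunctions together with the reflexivity of finitely generated projective $R$-modules.
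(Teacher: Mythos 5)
Your reduction of the theorem to the assertion that $\dHopf$ is a Frobenius $R$-algebra is exactly where the argument stops being a proof. The first half of your proposal is sound: $D\dHopf$ is indeed the coinduced module $\Hom_R(\dHopf,R)$, and it is injective in $\Hmodp$ (all conflations there are $R$-split, and your use of \cref{lem:Ainj} is legitimate). But injectivity cannot be converted to projectivity here: the coincidence of projectives and injectives is \cref{Frobenius}, which in the paper is a \emph{corollary} of \cref{thm:DH-proj}, so appealing to any form of it would be circular — and you correctly funnel everything into the Frobenius isomorphism $\Hom_R(\dHopf,R)\cong\dHopf$ instead. That step, however, is asserted rather than proved: no Larson--Sweedler/Pareigis-type theorem for finite Adams Hopf algebroids is stated or cited in the paper, and with distinct left and right units this is a genuinely delicate result, not formal bookkeeping. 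Moreover, your auxiliary claim that freeness of $\Hopf$ over $R$ forces the module of integrals to be free of rank one is unjustified already for honest Hopf algebras over a commutative ring: there, by Pareigis, the integrals form a rank-one projective $R$-module $T$, and $H$ is Frobenius precisely when $T\cong R$; if $H$ is free of rank $n$, the fundamental theorem of Hopf modules gives $\dual{H}\cong T\tensor{R}H\cong T^{\oplus n}$, and comparing top exterior powers yields only $T^{\otimes n}\cong R$, i.e.\ the class of $T$ in $\Pic(R)$ is torsion, not necessarily trivial. So freeness of $\Hopf$ does not, by any argument you give, trivialize the obstruction. Note also that the theorem only claims $D\dHopf$ is \emph{projective}; your route aims at the strictly stronger statement that it is free, which is plausible when every rank-one projective $R$-module is free (as in the application, $R=\MtwoR$) but is not what the general statement requires.

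The paper's proof avoids integrals entirely and gets projectivity directly. Since $c$ is an anti-isomorphism between the two $R$-module structures on $\Hopf$, freeness of $(\Hopf,\iota)$ makes $(D\dHopf,r)=(\Hopf,\eta)$ a free right $R$-module; choosing an $R$-basis $\{x_i\}$ containing $1$, one writes down $\dHopf$-linear maps
\[
	\dHopf\longmono D\dHopf\tensor{R}\dHopf\longepi\dHopf,
	\qquad 1\mapsto 1\tensor{}1,\quad 1\tensor{}1\mapsto 1,
\]
composing to the identity, and applies the anti-equivalence $D$: this exhibits $D\dHopf$ as a retract of $\dHopf\tensor{R}D\dHopf$, which is a \emph{free} $\dHopf$-module by \cref{cor:Ext->Free} because the underlying $R$-module of $D\dHopf$ is free. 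A retract of a free module is projective, and the proof is finished in two lines. (Two small further points: your closing appeal to \cref{lem:Aproj-Rproj} is misdirected, since that lemma passes from $\dHopf$-projectivity to $R$-projectivity, not the reverse — freeness trivially implies projectivity with no lemma needed; and if you wish to salvage your approach, even assuming $\Pic(R)=0$ you would still have to construct the nondegenerate integral for a Hopf algebroid, which is precisely the content the retraction argument renders unnecessary.)
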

\begin{proof}
		Since $c\colon\Hopf\to\Hopf$ is an anti-isomorphism between the left and right $R$-module structures on $\Hopf$, 
		the assumption that $(\Hopf,\iota)$ is a free left $R$-module implies that $(D\dHopf,r)=(\Hopf,\eta)$ is a free right $R$-module. 
		Following \cite{margolis}*{Theorem 12.2.9}, let $\{x_i\}$ be an $R$-basis of this right $R$-module containing $1$. 
	Then we have a retraction:
	\[
		\dHopf\longmono
		D\dHopf\tensor{R}\dHopf\longepi
		\dHopf
	\]
	given by $1\mapsto 1\tensor{} 1$ and $1\tensor{} 1\mapsto 1$ (the image of other $x_i\tensor{} 1$ doesn't matter). 
	Applying the functor $D$ to it, we obtain a retraction
	\[
		D\dHopf\longmono
		\dHopf\tensor{R}D\dHopf\longepi
		D\dHopf,
	\]
	where $\dHopf\tensor{R}D\dHopf$ is a free $\dHopf$-module by \cref{cor:Ext->Free}. 
	Therefore, $D\dHopf$ is a projective $\dHopf$-module.
\end{proof}

In what follows, $\Hopf$ is assumed to be free as an $R$-module. 
In our application, the ground ring $R$ is a polynomial ring. 
Then the \emph{Quillen-Suslin Theorem} \cite{Lang}*{XXI, Theorem 3.7} tells us that all projective $R$-modules are free over $R$. 
Hence, all finite Adams Hopf algebroids $(\Hopf,R)$ with $R$ a polynomial ring satisfy the assumption. Moreover, the conclusion of \cref{lem:Aproj-Rproj} is then improved to ``free over $R$''.
\begin{cor}\label{cor:dualizationOnStab}
	The $R$-linear functor $D\colon\opp{\Hmodp}\to\Hmodp$ induces a functor
	\[
		D\colon \opp{\stab{\dHopf}} \longrightarrow \stab{\dHopf}.
	\]
\end{cor}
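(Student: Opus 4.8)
The plan is to verify that the contravariant equivalence $D$ preserves the class of morphisms factoring through projectives, and this reduces to checking that $D$ sends projective objects of $\Hmodp$ to projective objects. Recall from the discussion following \cref{lem:Aproj-Rproj} that the projective objects of the exact category $\Hmodp$ are precisely the finitely generated projective $\dHopf$-modules. Hence the first step is to take an arbitrary projective $P$ in $\Hmodp$ and express it as a direct summand of a finitely generated free $\dHopf$-module $\dHopf^{\oplus n}$, via a retraction $P\longmono\dHopf^{\oplus n}\longepi P$ whose composite is the identity.

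Next I would apply $D$. Since $D$ is the composition of the $R$-dual anti-equivalence $\dual{}$ with the isomorphism $c$ of \cref{comodule=module}, it is itself a contravariant $R$-linear equivalence of $\Hmodp$; in particular it is additive and carries a retraction to a retraction. Thus $D$ sends the displayed splitting to a splitting $DP\longmono(D\dHopf)^{\oplus n}\longepi DP$ exhibiting $DP$ as a direct summand of $D(\dHopf^{\oplus n})\iso(D\dHopf)^{\oplus n}$. By \cref{thm:DH-proj}, $D\dHopf$ is a projective $\dHopf$-module (here the standing assumption that $\Hopf$ is free over $R$ enters), so $(D\dHopf)^{\oplus n}$ is projective, and therefore so is its summand $DP$. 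This establishes that $D$ carries projectives to projectives.

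Finally, I would deduce the descent formally. If $f\colon M\to N$ in $\Hmodp$ factors as $M\overset{\alpha}{\to} P\overset{\beta}{\to} N$ through a projective $P$, then contravariant functoriality gives $Df=D\alpha\circ D\beta\colon DN\to DM$ factoring as $DN\overset{D\beta}{\to}DP\overset{D\alpha}{\to}DM$ through $DP$, which is projective by the previous step. Hence $D$ maps every morphism that is null in $\stab{\dHopf}$ to a null morphism, so it respects the congruence $\sim$ defining the stable category and descends to the asserted functor $D\colon\opp{\stab{\dHopf}}\to\stab{\dHopf}$.

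The only substantive input is \cref{thm:DH-proj}, which is already proved; the remainder is the formal observation that a contravariant equivalence preserving the projective objects automatically preserves the ideal of maps factoring through projectives. I therefore do not anticipate a genuine obstacle. The one point that deserves care is the invocation that $D$, being an equivalence, transports the retraction witnessing $P$ as a summand to a retraction witnessing $DP$ as a summand, rather than merely acting on underlying objects; everything else is bookkeeping with contravariance.
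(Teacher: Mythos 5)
Your proposal is correct and takes essentially the same route as the paper: the paper's proof also reduces the corollary to showing that $D$ maps projectives to projectives, deducing this from \cref{thm:DH-proj} together with the observation that $D$ commutes with direct sums (which is exactly your retraction/summand argument spelled out). Your additional paragraphs --- expressing a projective as a summand of a finite free module and verifying that $D$ preserves the ideal of maps factoring through projectives --- simply make explicit the standard bookkeeping the paper leaves implicit, so there is no gap and no genuine difference in method.
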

\begin{proof}
	We need to show $D$ maps projectives to projectives. Since $D$ commutes with direct sums, the statement follows from \cref{thm:DH-proj}. 
\end{proof}
\begin{cor}\label{Frobenius}
	The exact category $\Hmodp$ is a \emph{Frobenius category}. That is to say, it has enough projectives and injectives, and they coincide.
\end{cor}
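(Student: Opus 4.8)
The plan is to verify the three defining properties of a Frobenius category — enough projectives, enough injectives, and the coincidence of the two classes — by exploiting the contravariant auto-equivalence $D$ from \cref{dualization} together with \cref{thm:DH-proj}. The first two properties are soft and follow formally from $D$; the coincidence is where the content of \cref{thm:DH-proj} is actually used.

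First I would check that $\Hmodp$ has enough projectives. Every object $M$ of $\Hmodp$ is a finitely generated $\dHopf$-module, so there is a surjection $\dHopf^{\oplus n}\longepi M$ from a finite free $\dHopf$-module. Because $M$ is projective over $R$, this surjection splits $R$-linearly, so its kernel is again finitely generated and projective over $R$; hence the surjection is an admissible epimorphism in the exact structure of $\Hmodp$ and its kernel lies in $\Hmodp$. Combined with \cref{lem:Aproj-Rproj}, which identifies the projective objects of $\Hmodp$ with the summands of the $\dHopf^{\oplus n}$, this shows that the finite free $\dHopf$-modules form a generating family of projectives.

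Next I would transport this statement to injectives via $D$. The discussion in \cref{dualization} exhibits $D\colon\opp{\Hmodp}\to\Hmodp$ as an exact contravariant equivalence, and rigidity of $\Hmodp$ (\cref{prop:RigidityOfHmodp}) yields a natural isomorphism $DD\iso\id$. An exact contravariant self-equivalence interchanges admissible monomorphisms with admissible epimorphisms, and hence interchanges projective with injective objects. Thus, applying $D$ to an admissible epi $P\longepi DM$ with $P$ projective produces an admissible mono $M\iso DDM\longmono DP$ with $DP$ injective, so $\Hmodp$ has enough injectives; moreover its injective objects are precisely the $D$-images of its projective objects.

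Finally I would prove that projectives and injectives coincide, which is where \cref{thm:DH-proj} enters. Since $D\dHopf$ is projective and $D$ commutes with finite direct sums, every injective object — being a summand of some $D(\dHopf^{\oplus n})\iso(D\dHopf)^{\oplus n}$ — is a summand of a projective, hence projective. Conversely, using $DD\iso\id$ and that $D$ carries projectives to injectives, the free module $\dHopf\iso D(D\dHopf)$ is the $D$-image of the projective object $D\dHopf$ and is therefore injective; consequently every projective object, being a summand of some $\dHopf^{\oplus n}$, is injective as well. This yields the desired coincidence and completes the verification that $\Hmodp$ is Frobenius. The main obstacle is not any single computation but making precise that $D$ is an honest \emph{exact} contravariant self-equivalence with $DD\iso\id$, so that it transports the exact structure — and hence projectivity, injectivity, and the "enough of them" properties — correctly across the duality; once this is in hand, \cref{thm:DH-proj} does the essential work of collapsing the two classes into one.
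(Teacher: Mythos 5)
Your proposal is correct and follows essentially the same route as the paper: enough projectives from a (free) cover, enough injectives by transporting along the exact anti-equivalence $D$, and the coincidence of the two classes via \cref{thm:DH-proj}. The only cosmetic difference is that the paper uses the extended module $\dHopf\tensor{R}M\to M$ (projective by \cref{cor:Ext->Free}) where you use a finite free cover and verify admissibility by $R$-linear splitting --- a point the paper leaves implicit, and which your write-up usefully makes explicit.
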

\begin{proof}
	Any object $M$ in $\Hmodp$ admits a projective covering $\dHopf\tensor{R}M \to M$ from the extended module. 
		This shows that $\Hmodp$ has enough projectives.
	The dualization $D$ maps any projective covering to an injective hull. 
	This shows that $\Hmodp$ has enough injectives.
	Finally, \cref{thm:DH-proj} tells us that $D$ maps projectives to projectives. Therefore, projectives and injectives in $\Hmodp$ coincide.
\end{proof}

\begin{cor}
	The symmetric monoidal category $\stab{\dHopf}$ is left rigid.
\end{cor}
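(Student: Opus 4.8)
The plan is to obtain the rigidity of $\stab{\dHopf}$ by transporting the duality data of \cref{prop:RigidityOfHmodp} along the canonical quotient functor. Write
\[
	Q\colon \Hmodp \longrightarrow \stab{\dHopf}
\]
for the projection to the stable category. By construction $Q$ is the identity on objects, and the symmetric monoidal structure on $\stab{\dHopf}$ is precisely the one inherited from $\Hmodp$ (this is why the tensor product must preserve projectives, so that $\tensor{R}$ descends to stable morphism classes). Hence $Q$ is a symmetric monoidal functor that is bijective on objects. The formal principle I would invoke is that a symmetric monoidal functor carries left-dual data to left-dual data; combined with the left rigidity of the source $\Hmodp$ and the essential surjectivity of $Q$, this forces $\stab{\dHopf}$ to be left rigid as well.

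In more detail, I would first recall from \cref{prop:RigidityOfHmodp} that each $M$ in $\Hmodp$ has left dual $DM$, witnessed by an evaluation $\mathrm{ev}_M\colon DM\tensor{R}M\to R$ and a coevaluation $\mathrm{coev}_M\colon R\to M\tensor{R}DM$ in $\Hmodp$ (note that $R$, being free of rank one, indeed lies in $\Hmodp$) satisfying the two triangle identities. Applying $Q$ sends these to morphisms $Q(\mathrm{ev}_M)$ and $Q(\mathrm{coev}_M)$ of $\stab{\dHopf}$; since the triangle identities are equalities of morphisms in $\Hmodp$ and $Q$ respects composition, $\tensor{R}$, and the unit, those equalities descend verbatim to $\stab{\dHopf}$. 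Thus $DM$, regarded as an object of $\stab{\dHopf}$, is a left dual of $M$ there, which is exactly the claim. I would also point out that \cref{cor:dualizationOnStab} is what makes this assignment canonical and functorial: because $D$ preserves projectives (\cref{thm:DH-proj}, using the freeness of $\Hopf$ over $R$), it descends to $D\colon\opp{\stab{\dHopf}}\to\stab{\dHopf}$, so the chosen duals assemble into a genuine duality functor rather than a mere object-by-object existence statement; this also reconciles the corollary with the Frobenius structure of \cref{Frobenius}.

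I do not expect a substantive obstacle here: all of the analytic content — existence of duals in $\Hmodp$, preservation of projectives under $D$, and stability of the tensor ideal of projectively-factoring maps under $\tensor{R}$ — has already been established in the preceding results. What remains is the essentially formal verification that $Q$ is strictly monoidal and that rigidity is preserved by monoidal quotient functors. The only bookkeeping point worth stating carefully is that the unit object $R$ and the duals $DM$ genuinely live in $\Hmodp$ (hence in $\stab{\dHopf}$), so that $\mathrm{ev}_M$ and $\mathrm{coev}_M$ are morphisms of the exact category and survive the passage to the quotient; once this is noted, the triangle identities transfer mechanically and the corollary follows.
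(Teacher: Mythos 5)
Your proof is correct and takes essentially the same route as the paper, whose entire proof reads ``This follows from \cref{prop:RigidityOfHmodp} and \cref{cor:dualizationOnStab}'': you supply exactly the formal verification that one-liner tacitly invokes, namely that the quotient functor $Q\colon\Hmodp\to\stab{\dHopf}$ is strict symmetric monoidal and bijective on objects, so the evaluation, coevaluation, and triangle identities from \cref{prop:RigidityOfHmodp} descend verbatim. Your closing remark correctly places \cref{cor:dualizationOnStab} as what upgrades object-by-object duals to a duality functor, matching the paper's intent.
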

\begin{proof}
	This follows from \cref{prop:RigidityOfHmodp} and \cref{cor:dualizationOnStab}.
\end{proof}

The following lemma gives us a criterion of stable invertibility.
\begin{lem}\label{invertibility}
	Let $M$ be an object in $\Hmodp$. 
	The evaluation morphism 
	\[DM\tensor{R}M\xrightarrow{eval}R\] 
	is a stable equivalence if and only if $M$ is invertible in $\stab{\dHopf}$. 
	In particular, the inverse of $[M]$ in $\Pic(\dHopf)$ is given by its dual $[DM]$.
\end{lem}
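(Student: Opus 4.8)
The plan is to read this off the general behaviour of invertible objects in a rigid symmetric monoidal category, applied to $\stab{\dHopf}$. By \cref{cor:dualizationOnStab} together with \cref{prop:RigidityOfHmodp} and the corollary just before this lemma, $\stab{\dHopf}$ is a symmetric monoidal category which is left rigid, with the left dual of $M$ furnished by $DM$; being symmetric, it is in fact rigid. The first bookkeeping step is to record that the morphism named in the statement is exactly the counit datum of this duality, i.e. that the explicit $R$-linear pairing $DM\tensor{R}M\to R$ agrees with the categorical evaluation $\mathrm{ev}$ coming from the left-dual structure, together with its companion coevaluation $\mathrm{coev}\colon R\to M\tensor{R}DM$ satisfying the triangle identities. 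Once this identification is in place, ``isomorphism'' is interpreted as ``stable equivalence'' and the two directions follow the standard argument.

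For the easy direction, suppose the evaluation is a stable equivalence, so that $DM\tensor{R}M\iso R$ in $\stab{\dHopf}$. Applying the symmetry isomorphism gives $M\tensor{R}DM\iso R$ as well, so $M$ is stably invertible with inverse $DM$, which is precisely invertibility in $\stab{\dHopf}$.

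For the converse, suppose $M$ is invertible, with inverse $N$; then $-\tensor{R}N$ is a quasi-inverse to $-\tensor{R}M$, so $-\tensor{R}M$ is an auto-equivalence of $\stab{\dHopf}$. On the other hand, the triangle identities relating $\mathrm{ev}$ and $\mathrm{coev}$ exhibit $-\tensor{R}DM$ as the right adjoint of $-\tensor{R}M$, with counit at an object $Y$ given by $\mathrm{id}_Y\tensor{R}\mathrm{ev}$ after reassociation. Since an adjunction whose left adjoint is an equivalence is automatically an adjoint equivalence, this counit is a natural isomorphism; evaluating it at the unit $R$ recovers (up to the unitor) the evaluation morphism $DM\tensor{R}M\to R$, which is therefore a stable equivalence. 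Finally, the ``in particular'' clause is then formal: from $DM\tensor{R}M\iso R$ we get $[DM]\cdot[M]=[DM\tensor{R}M]=[R]$, the identity of $\Pic(\dHopf)$, so $[DM]=[M]^{-1}$.

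I expect the only real obstacle to be the bookkeeping of the first paragraph, namely verifying that the concrete pairing $DM\tensor{R}M\to R$ really is the counit of the left-dual adjunction on $\stab{\dHopf}$ (and that the coevaluation descending from $\Hmodp$ is its mate), so that the abstract rigidity lemma transports verbatim. After that, the categorical content reduces to the standard facts that a left dual gives an adjunction via the triangle identities and that the right adjoint of an equivalence has invertible counit, both of which are routine.
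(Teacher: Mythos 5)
Your proof is correct and takes the same route as the paper: the paper's entire proof of this lemma is the one-line appeal ``this is a standard result in a rigid monoidal category,'' and your argument is precisely a correct unwinding of that standard result (the duality adjunction via the triangle identities, symmetry for the easy direction, and the fact that an adjunction whose left adjoint is an equivalence has invertible counit for the converse). The bookkeeping step you flag---that the concrete pairing $DM\tensor{R}M\to R$ is the counit of the left-dual structure descending from $\Hmodp$---is exactly what \cref{prop:RigidityOfHmodp} and \cref{cor:dualizationOnStab} supply, so nothing is missing.
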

\begin{proof}
	This is a standard result in a rigid monoidal category.
\end{proof}
\begin{remark}
	Since $DM\tensor{R}M\iso\End_{R}[M]$, objects verifying \cref{invertibility} are said to be \emph{endo-trivial}.
\end{remark}

\subsection{Algebraic loop functor}
The category $\stab{\dHopf}$ has the following extra structure.
\begin{defn}\label{def:algloop}
	Let $\Omega$ be the endo-functor of $\stab{\dHopf}$ given by 
	\[
		\Omega M = \ker(PM\longrightarrow M),
	\]
	where $PM\to M$ is any projective cover of $M$. For $k>0$, define $\Omega^kM$ inductively to be $\Omega (\Omega^{k-1} M)$; for $k<0$, define $\Omega^k M$ to be $D\Omega^{-k}DM$.
\end{defn}
\begin{remark}
	Note that $\Omega M$ is a finitely generated projective $R$-module since it is a submodule of $P$, which is a finitely generated and projective over $R$ by \cref{lem:Aproj-Rproj}.
\end{remark}

An immediate application of the following Schanuel's lemma shows that, up to stable equivalence, $\Omega M$ is independent of the choice of $P$.

\begin{lem}[Schanuel's lemma \cite{Lang}*{XXI, Lemma 2.4}]\label{SchanuelLemma}
	Suppose we are given the solid arrows in the following diagram, where the horizontal lines are exact sequences and $P,P'$ are projectives. 
	\[\begin{tikzcd}
	0 & {K} & {P} & {M} & 0 \\
	0 & {K'} & {P'} & {M} & 0
	\arrow[from=1-1, to=1-2]
	\arrow[from=1-2, to=1-3]
	\arrow[from=1-3, to=1-4]
	\arrow[from=1-4, to=1-5]
	\arrow[from=2-1, to=2-2]
	\arrow[from=2-2, to=2-3]
	\arrow[from=2-3, to=2-4]
	\arrow[from=2-4, to=2-5]
	\arrow[dotted, from=1-2, to=2-2]
	\arrow[dotted, from=1-3, to=2-3]
	\arrow[Rightarrow, no head, from=1-4, to=2-4]
	\end{tikzcd}\]
	Then the commutative diagram is completed by the dotted arrows making the left square a Cartesian square. In particularly, $K\oplus P'\cong K'\oplus P$.
\end{lem}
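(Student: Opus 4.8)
The plan is to realize both sequences as the two halves of a single Cartesian square and then read off the stable isomorphism from a splitting; every verification will be a short diagram chase carried out with generalized elements, as set up in the earlier remark, so that the argument is legitimate in the ambient abelian category $\Hmod$. Write $\iota_K\colon K\hookrightarrow P$, $p\colon P\to M$, $\iota_{K'}\colon K'\hookrightarrow P'$, and $p'\colon P'\to M$ for the structure maps. First I would produce the dotted vertical arrows: since $P$ is projective and $p'$ is an epimorphism, $p$ factors as $p=p'\phi$ for some $\phi\colon P\to P'$. Because $p'\phi\iota_K=p\iota_K=0$, the composite $\phi\iota_K$ lands in $K'=\ker p'$, yielding $\bar\phi\colon K\to K'$ with $\iota_{K'}\bar\phi=\phi\iota_K$. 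These are the dotted arrows, and the left square commutes by construction.

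Next I would check that the left square is Cartesian. Working with generalized elements, a generalized element $x$ of $P$ satisfies $\phi(x)\in K'$ precisely when $p'\phi(x)=0$, that is, when $p(x)=0$, that is, when $x$ factors through $\iota_K$. Since $\iota_{K'}$ is monic, this identifies $K$ with the fiber product $P\times_{P'}K'$, so the square is a pullback.

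The key step is then to extract a short exact sequence. I claim that
\[
0\longrightarrow K\xrightarrow{\ (\iota_K,\,\bar\phi)\ }P\oplus K'\xrightarrow{\ \phi-\iota_{K'}\ }P'\longrightarrow 0
\]
is exact. The leftmost map is monic since $\iota_K$ is, and the composite vanishes because the left square commutes. Exactness in the middle is exactly the pullback property of the previous step: the kernel of $\phi-\iota_{K'}$ is by definition $P\times_{P'}K'$, which is $K$ embedded via $(\iota_K,\bar\phi)$. Surjectivity on the right uses that $p$ is epic: given a generalized element $z$ of $P'$, lift $p'(z)$ along $p$ to some $a$ in $P$, so $\phi(a)-z$ lies in $\ker p'=K'$ and equals $\iota_{K'}(b)$ for some $b$, whence $(a,b)\mapsto z$. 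Finally, since $P'$ is projective the displayed sequence splits, giving $P\oplus K'\cong K\oplus P'$, i.e.\ $K\oplus P'\cong K'\oplus P$.

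I do not expect a genuine conceptual obstacle here: the whole statement is a formal consequence of projectivity plus the pullback identification. The only place demanding care is the exactness of the three-term sequence, and in particular its right-exactness, which is where the epimorphism $p$ is spent. It is worth flagging that the construction is not symmetric in the two rows — the lift $\phi$ is manufactured from the projectivity of $P$, while the concluding splitting consumes the projectivity of $P'$ — so both projectivity hypotheses are genuinely used even though the final isomorphism is symmetric in $(K,P)$ and $(K',P')$.
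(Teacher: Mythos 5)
Your proof is correct, and it is essentially the argument the paper points to: the paper offers no proof of this lemma at all, citing Lang (XXI, Lemma 2.4), and your construction---lift $p$ through $p'$ using projectivity of $P$, identify $K$ with the pullback $P\times_{P'}K'$, encode that pullback as the kernel of the difference map to obtain the short exact sequence $0\to K\to P\oplus K'\to P'\to 0$, then split it by projectivity of $P'$---is the standard proof. It moreover delivers the Cartesian-square clause of the statement as stated in the paper, which a proof quoting only the bare isomorphism $K\oplus P'\cong K'\oplus P$ would leave unaddressed, and your closing remark about the asymmetry (projectivity of $P$ buys the lift $\phi$, projectivity of $P'$ buys the splitting) is accurate. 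The one step needing slightly more care is the right-exactness chase: with the paper's Yoneda-style generalized elements (sections of $\operatorname{Hom}(-,X)$), an epimorphism need not be surjective on $T$-points for a fixed test object $T$, so ``lift $p'(z)$ along $p$'' is not literally possible without refining the domain. The repair is one line: run your chase on the universal element $z=\operatorname{id}_{P'}$ over the refinement $T'=P'\times_M P$; since $p$ is epic, the projection $T'\to P'$ is epic, and your formulas produce a map $T'\to P\oplus K'$ through which this epimorphism factors, whence $\phi-\iota_{K'}$ is epic. (Equivalently, work with Mac Lane's members calculus, where such refinement is built in.) With that understood, nothing is missing.
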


\begin{defn}
	The tensor unit $R$ of $\Hmod$ admits a projective covering $\dual{\iota}\colon\dHopf\to R$ given by its module structure. Its kernel is called the \emph{augmentation ideal} $I$ of $\dHopf$. 
	Dually, the cokernel of $\eta\colon R\to D\dHopf$ is denoted by $I^{-1}$.
\end{defn}
\begin{lem}\label{Iisinj}
	The augmentation ideal $I$ is stably invertible.
\end{lem}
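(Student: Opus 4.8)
The plan is to prove that $I$ is stably invertible by exhibiting $I^{-1}$ as its inverse, i.e. by showing that $I\tensor{R}I^{-1}$ is stably equivalent to $R$. Everything rests on one observation: tensoring with $I$ realizes the loop functor $\Omega$ of \cref{def:algloop} on $\stab{\dHopf}$. Granting this, the proof becomes a double application of \cref{SchanuelLemma} to the two defining short exact sequences of $I$ and of $I^{-1}$. First I would record the key claim: for every $N$ in $\Hmodp$ there is a stable equivalence $I\tensor{R}N\cong\Omega N$ (in particular $I\cong\Omega R$).

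To establish this claim I would start from the defining sequence $0\to I\to\dHopf\xrightarrow{\dual{\iota}}R\to0$. It is split as a sequence of $R$-modules, since the algebra unit of $\dHopf$ splits $\dual{\iota}$, so it is an admissible exact sequence in the exact category $\Hmodp$ and stays exact after applying $-\tensor{R}N$, giving
\[ 0\longrightarrow I\tensor{R}N\longrightarrow\dHopf\tensor{R}N\longrightarrow N\longrightarrow0. \]
By the lemma identifying the monoidal and the extended $\dHopf$-module structures on $\dHopf\tensor{R}N$, the middle term is the extended module on the underlying $R$-module of $N$; because the ground ring is a polynomial ring, Quillen--Suslin makes this $R$-module free, so \cref{cor:Ext->Free} shows the middle term is a free, hence projective, $\dHopf$-module. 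The sequence therefore presents $I\tensor{R}N$ as the kernel of a surjection from a projective onto $N$, and \cref{SchanuelLemma} identifies it with $\Omega N$ in $\stab{\dHopf}$.

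To finish, I would apply this with $N=I^{-1}$, obtaining $I\tensor{R}I^{-1}\cong\Omega I^{-1}$, and then compute $\Omega I^{-1}$. Recall $I^{-1}=\coker(\eta\colon R\to D\dHopf)$; the right unit $\eta$ is a split $R$-monomorphism (split by the augmentation $\epsilon$), and by \cref{thm:DH-proj} together with \cref{Frobenius} the object $D\dHopf$ is projective, so
\[ 0\longrightarrow R\xrightarrow{\ \eta\ }D\dHopf\longrightarrow I^{-1}\longrightarrow0 \]
is an admissible exact sequence in $\Hmodp$ with projective middle term. Thus it computes a syzygy of $I^{-1}$, and \cref{SchanuelLemma} yields $\Omega I^{-1}\cong R$ in $\stab{\dHopf}$. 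Combining the two displays gives $I\tensor{R}I^{-1}\cong R$, so $I$ is stably invertible with inverse $I^{-1}$ (and, consistently, $I^{-1}\cong DI$).

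I expect the main obstacle to be the bookkeeping inside the key claim rather than any deep difficulty: one must check that $-\tensor{R}N$ preserves exactness in $\Hmodp$ (which follows from the $R$-projectivity of all terms, itself guaranteed by Quillen--Suslin) and, crucially, that the monoidal $\dHopf$-structure on the middle term agrees with its extended structure so that \cref{cor:Ext->Free} applies. Once the identification $I\tensor{R}-\cong\Omega$ is in place, the remainder is a formal consequence of the Frobenius structure from \cref{Frobenius} and Schanuel's lemma.
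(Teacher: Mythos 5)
Your proof is correct and takes essentially the same route as the paper: the paper likewise tensors $0\to I\to\dHopf\to R\to 0$ with $I^{-1}$, shows the middle term $\dHopf\tensor{R}I^{-1}$ is projective via \cref{cor:Ext->Free}, and compares against $0\to R\to D\dHopf\to I^{-1}\to 0$ using \cref{SchanuelLemma}; your intermediate claim $I\tensor{R}N\simeq\Omega N$ just repackages these same two Schanuel applications (and is exactly the content of the paper's subsequent corollary that $\Omega$ preserves stable invertibility). One small correction: the section's standing hypothesis is only that $\Hopf$ is free over $R$, not that $R$ is polynomial, so your appeal to Quillen--Suslin to make the underlying $R$-module free over-assumes; do as the paper does and embed the finitely generated projective $R$-module as a summand of a finite free module before applying \cref{cor:Ext->Free}, which yields projectivity of the extended module without any hypothesis on $R$.
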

\begin{proof}
	In fact, $I^{-1}=DI$ is its inverse. To see this, we can tensor the short exact sequence $0\to I\to\dHopf\to R\to 0$ with $I^{-1}$ and get
	\[
		0\longrightarrow I\tensor{R}I^{-1}\longrightarrow\dHopf\tensor{R}I^{-1}\to I^{-1}\longrightarrow 0.
	\]
	Note that the middle term is projective: indeed, since $I^{-1}$ is finitely generated and projective as an $R$-module, applying \cref{cor:Ext->Free} to a finite free $R$-module containing $I^{-1}$ as its summand, we see that $\dHopf\tensor{R}I^{-1}$ is projective. 
	Comparing the above sequence with $0\to R\to D\dHopf\to I^{-1}\to 0$, \cref{SchanuelLemma} tells us that $I\tensor{R}I^{-1}$ is stably equivalent to $R$. 
\end{proof}
\begin{cor}
	If $M$ is stably invertible, then so is $\Omega M$. 
\end{cor}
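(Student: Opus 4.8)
The plan is to identify $\Omega M$ with $I\tensor{R}M$ in the stable category and then invoke that the Picard group is closed under tensor products. First I would tensor the defining short exact sequence of the augmentation ideal,
\[
0\longrightarrow I\longrightarrow\dHopf\xrightarrow{\dual{\iota}}R\longrightarrow 0,
\]
over $R$ with $M$. Since $M$ lies in $\Hmodp$, its underlying $R$-module is finitely generated and projective, hence flat, so tensoring is exact; using the unit isomorphism $R\tensor{R}M\iso M$ this produces a short exact sequence of $\dHopf$-modules
\[
0\longrightarrow I\tensor{R}M\longrightarrow\dHopf\tensor{R}M\longrightarrow M\longrightarrow 0.
\]

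The key step is to check that the middle term is a projective $\dHopf$-module. Writing the $R$-module $M$ as a summand of a finite free $R$-module $F$, the lemma identifying the monoidal and the extended module structures on $\dHopf\tensor{R}(-)$ shows that $\dHopf\tensor{R}M$ is a summand of the extended module $\dHopf\tensor{R}F$, which is free by \cref{cor:Ext->Free}; hence $\dHopf\tensor{R}M$ is projective. Thus the displayed sequence is a projective presentation of $M$. Comparing it with any projective cover $0\to\Omega M\to PM\to M\to 0$ through \cref{SchanuelLemma} yields
\[
\Omega M\oplus(\dHopf\tensor{R}M)\iso (I\tensor{R}M)\oplus PM,
\]
and since the two extra summands are projective, $\Omega M$ and $I\tensor{R}M$ are stably equivalent in $\stab{\dHopf}$.

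Finally I would conclude by the group law on $\Pic(\dHopf)$. The augmentation ideal $I$ is stably invertible by \cref{Iisinj}, and $M$ is stably invertible by hypothesis, so the tensor product $I\tensor{R}M$ represents the product $[I]\cdot[M]$ of two classes in the group $\Pic(\dHopf)$ and is therefore itself invertible. As $\Omega M$ is stably equivalent to $I\tensor{R}M$, it too is stably invertible, which is the claim.

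I expect the only genuine subtlety to be the projectivity of $\dHopf\tensor{R}M$: this is what lets the comparison sequence serve as a projective presentation and feeds Schanuel's lemma. It rests on the extended-module identification together with \cref{cor:Ext->Free}, exactly as in the proof of \cref{Iisinj}; once that is in hand the remainder is a formal consequence of the Frobenius structure and the fact that $\Pic(\dHopf)$ is a group.
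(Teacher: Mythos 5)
Your proposal is correct and matches the paper's proof, which likewise reduces the claim to the stable equivalence $\Omega M\simeq I\tensor{R}M$ (stated there as following from the uniqueness of $\Omega M$) and then invokes \cref{Iisinj} together with the group law in $\Pic(\dHopf)$. You have simply made explicit the details the paper leaves implicit---the projectivity of $\dHopf\tensor{R}M$ via the extended-module identification and \cref{cor:Ext->Free}, and the comparison of presentations through \cref{SchanuelLemma}---exactly as in the paper's proof of \cref{Iisinj}.
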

\begin{proof}
	It suffices to show that $\Omega M$ is stably equivalent to $I\tensor{R}M$, which follows from the uniqueness of $\Omega M$.
\end{proof}
\begin{thm}\label{TrivialsOfPic}
	Suppose our working category $\K$ is graded by an abelian group $\Gamma$. Then we have an injection
	\[
		\Gamma\times\Z\longmono \Pic(\dHopf),
	\]
	where the first component comes from the grading and the second component comes from $I$.
\end{thm}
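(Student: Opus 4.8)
The plan is to realize the claimed injection as a group homomorphism out of $\Gamma\times\Z$ and then to detect its two factors by independent invariants. For the construction, I will write $\Sigma^{\gamma}$ for the grading-shift auto-equivalence of $\stab{\dHopf}$ coming from the $\Gamma$-grading of $\K$; each $\Sigma^{\gamma}$ is symmetric monoidal, so $[\Sigma^{\gamma}R]$ is invertible and $\gamma\mapsto[\Sigma^{\gamma}R]$ is a homomorphism $\Gamma\to\Pic(\dHopf)$. On the other factor I will use that the corollary following \cref{Iisinj} identifies $\Omega M$ with $I\tensor{R}M$ up to stable equivalence, while \cref{Iisinj} itself shows $I$ is stably invertible; hence $n\mapsto[\Omega^{n}R]=[I^{\otimes n}]$ is a homomorphism $\Z\to\Pic(\dHopf)$. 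Since grading shifts commute with $-\tensor{R}I$, these assemble into a homomorphism
\[
	\Phi\colon\Gamma\times\Z\longrightarrow\Pic(\dHopf),\qquad
	(\gamma,n)\longmapsto[\Sigma^{\gamma}\Omega^{n}R].
\]
Everything then reduces to showing $\Phi$ is injective, i.e.\ that $\Omega^{n}R$ is stably equivalent to a grading shift of $R$ only when $n=\gamma=0$.

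Next I would set up a graded-rank obstruction. Under the standing hypotheses every object of $\Hmodp$ is free over the polynomial ring $R$ (Quillen--Suslin together with \cref{lem:Aproj-Rproj}), so each $M$ carries a well-defined graded rank $\operatorname{rk}(M)\in\Z[\Gamma]$, multiplicative under $-\tensor{R}-$. Letting $\mathcal{P}\subseteq\Z[\Gamma]$ be the ideal generated by the graded ranks of projective objects—an ideal because tensoring any object with a projective yields a projective by \cref{cor:Ext->Free}—the assignment $M\mapsto\operatorname{rk}(M)+\mathcal{P}$ will descend to a homomorphism $\rho\colon\Pic(\dHopf)\to(\Z[\Gamma]/\mathcal{P})^{\times}$. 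From $0\to I\to\dHopf\to R\to0$ one reads off $\operatorname{rk}(I)\equiv-1$, while $\rho(\Sigma^{\gamma}R)=t^{\gamma}$, so $\rho(\Phi(\gamma,n))=(-1)^{n}t^{\gamma}$. In particular $\Phi(\gamma,n)=[R]$ forces $n$ even, which immediately eliminates all odd loop indices.

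For the grading factor I would argue by Krull--Schmidt: the trivial module $R$ is indecomposable in $\stab{\dHopf}$ (it is generated by a single element concentrated in one degree, with local degree-$0$ endomorphisms), so $\Sigma^{\gamma}R$ and $\Sigma^{\gamma'}R$ are stably equivalent if and only if $\gamma=\gamma'$. Thus $\Phi\vert_{\Gamma\times\{0\}}$ is injective, and as soon as I know that any element of $\ker\Phi$ has loop index $0$, injectivity of the whole grading factor follows.

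The hard part—and the main obstacle—is exactly the infinitude of the loop factor: showing $\Omega^{n}R$ is stably equivalent to $\Sigma^{\gamma}R$ only for $n=0$. Because $\Hmodp$ is a Frobenius category (\cref{Frobenius}) and $R$ is non-projective, the minimal free resolution of $R$ is infinite, so $\Omega^{n}R\neq0$ for all $n$; but infinitude does \emph{not} by itself preclude a periodicity $\Omega^{n}R\simeq\Sigma^{\gamma}R$ (such periodicity genuinely occurs for truncated-polynomial examples). The homomorphism $\rho$ only sees the parity of $n$, so it cannot settle this alone; instead I would track a strictly varying invariant of the syzygies $\Omega^{n}R$. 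In the application this will be furnished by the Margolis homology developed in \cref{sec:mot}, whose dependence on $n$ separates the classes $[\Omega^{n}R]$ from every grading shift of $R$; abstractly it is the non-periodicity of $\dHopf$ that must be invoked. Granting this, $\ker\Phi$ has trivial loop index, whence the previous paragraph gives $\gamma=0$ as well, and $\Phi$ is injective.
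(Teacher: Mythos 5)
Your construction of the homomorphism agrees with the paper's, but you go substantially further than the paper's own proof, which consists \emph{only} of the well-definedness checks: it verifies that $[\Sigma^{\gamma}R]$ is invertible and that $[I]$ is invertible via \cref{Iisinj}, and never addresses injectivity at all. Your additional machinery is essentially sound under the running hypotheses (projectives in $\Hmodp$ are free, so a graded rank in $\Z[\Gamma]$ makes sense, and it descends to stable classes), and your Krull--Schmidt detection of the $\Gamma$-factor works because $R$ is a connected graded local ring, so $\Sigma^{\gamma}R\iso\Sigma^{\gamma'}R$ forces $\gamma=\gamma'$. One caveat on the parity step: to conclude from $(-1)^{n}t^{\gamma}=1$ in $(\Z[\Gamma]/\mathcal{P})^{\times}$ that $n$ is even, you need $1+t^{\gamma}\notin\mathcal{P}$; since $\mathcal{P}$ is generated by the graded rank of $\dHopf$, this fails precisely when $\dHopf$ has total rank $2$ --- which is exactly the periodic situation you flag later, so the invariant is consistent but not free of hypotheses.

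The gap you name --- that $\Omega^{n}R\not\simeq\Sigma^{\gamma}R$ for $n\neq 0$ must be \emph{granted} --- is genuine, and you are right that no abstract argument can close it: under the stated hypotheses alone the injectivity claim is false. Take $\Hopf$ the exterior Hopf algebra on one generator $x$ of nonzero degree over a field (a commutative Hopf algebra, hence a finite Adams Hopf algebroid with $\Hopf$ free over $R$); then $I\iso\Sigma^{\abs{x}}R$ with trivial action, so $[\Omega R]=[\Sigma^{\abs{x}}R]$ and the map $\Gamma\times\Z\to\Pic(\dHopf)$ has nontrivial kernel. So your proposal is a conditional proof of a statement that is itself conditional, and in this respect it is more honest than the paper: the paper simply omits injectivity, and in the case of actual interest the claim is only secured downstream, in the proof of \cref{PicA(1)R}, where the composite of the inclusion from \cref{lem:PicZ3,prop:joker} with the injection of \cref{injPic} into the Picard group of $\F_2[D_8]$ is computed to be an isomorphism, retroactively separating the shift and loop factors. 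Your instinct to invoke Margolis homology (\cref{Margolisinvert}) to distinguish $[\Omega^{n}R]$ from grading shifts is precisely the input that does this work for $\cAR$; if you want a self-contained abstract statement, you should add a non-periodicity hypothesis on $\dHopf$ (equivalently, that $[I]$ generates a free $\Z$-summand meeting the shift classes trivially), since the theorem cannot hold without it.
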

\begin{proof}
	Let $\Sigma^{\gamma}$ be the grading shifting functor corresponding to $\gamma\in\Gamma$. 
		Then $\Sigma^{\gamma}R$ is stably invertible with inverse $\Sigma^{-\gamma}R$. 
		This explains the first component. 
		The second component follows from \cref{Iisinj}.
\end{proof}
\begin{defn}
	We call the image of the injection in \cref{TrivialsOfPic} the \emph{ordinary part} of the Picard group $\Pic(\dHopf)$ since it is always contained in the Picard group.
\end{defn}

\subsection{Triangulated structure}
With above results, we can verify that $\stab{\dHopf}$ is a tensor triangulated category whose suspension is given by the functor $\Omega^{-1}$ and whose distinguish triangles $M_1\to M_2\to M_3\to \Omega^{-1}M_1$ are given by the short exact sequences $0\to M_1\to M_2\to M_3\to 0$ of $\dHopf$-modules.

We finish this section with a discussion on the Ext groups.
\begin{lem}
	For any objects $M,N$ in $\Hmodp$ and $k\geqslant 0$, we have 
	\[
		\Hom_{\stab{\dHopf}}[\Omega^kM,N]\cong \Ext_{\dHopf}^k[M,N].
	\]
\end{lem}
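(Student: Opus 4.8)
The plan is to prove the isomorphism by induction on $k$, with the engine being dimension shifting along the defining short exact sequence of the loop functor,
\[
	0 \longrightarrow \Omega M \longrightarrow PM \longrightarrow M \longrightarrow 0,
\]
where $PM\to M$ is a projective cover, for instance the extended module $\dHopf\tensor{R}M\to M$ of \cref{Frobenius}. Applying $\Hom_{\dHopf}[-,N]$ produces a long exact sequence in the groups $\Ext_{\dHopf}^{*}[-,N]$, and the vanishing $\Ext_{\dHopf}^{i}[PM,N]=0$ for all $i\geqslant 1$ (since $PM$ is projective) is what drives every reduction. I would also first record that $\Omega M$ genuinely lands in $\Hmodp$: because $M$ is projective over $R$, the sequence above splits over $R$, so $\Omega M$ is an $R$-summand of $PM$; and its stable class is independent of the chosen cover by \cref{SchanuelLemma}.

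For the base case $k=1$, the relevant segment of the long exact sequence is
\[
	\Hom_{\dHopf}[PM,N] \xrightarrow{\ \iota^{*}\ } \Hom_{\dHopf}[\Omega M,N] \longrightarrow \Ext_{\dHopf}^{1}[M,N] \longrightarrow 0,
\]
so $\Ext_{\dHopf}^{1}[M,N]$ is the cokernel of the restriction $\iota^{*}$ along the inclusion $\iota\colon\Omega M\hookrightarrow PM$. I would then identify the image of $\iota^{*}$ with exactly the subgroup of maps $\Omega M\to N$ that factor through a projective, which realizes the cokernel as $\Hom_{\stab{\dHopf}}[\Omega M,N]$. One containment is immediate: any $h\circ\iota$ factors through the projective $PM$. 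For the reverse, given $g=\beta\circ\alpha$ with $\alpha\colon\Omega M\to Q$, $\beta\colon Q\to N$ and $Q$ projective, I invoke \cref{Frobenius}: in a Frobenius category projectives are injective, so $Q$ is injective and $\alpha$ extends along the monomorphism $\iota$ to some $\tilde\alpha\colon PM\to Q$; then $\beta\tilde\alpha\colon PM\to N$ restricts to $g$, whence $g\in\operatorname{im}\iota^{*}$.

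For the inductive step ($k\geqslant 2$), the same long exact sequence, now using $\Ext_{\dHopf}^{k-1}[PM,N]=\Ext_{\dHopf}^{k}[PM,N]=0$, yields isomorphisms $\Ext_{\dHopf}^{k}[M,N]\cong\Ext_{\dHopf}^{k-1}[\Omega M,N]$. Applying the inductive hypothesis to the object $\Omega M$ in degree $k-1$ and using $\Omega^{k-1}(\Omega M)=\Omega^{k}M$ gives $\Ext_{\dHopf}^{k}[M,N]\cong\Hom_{\stab{\dHopf}}[\Omega^{k}M,N]$, completing the induction; the degenerate case $k=0$ is the defining identity, read through the stable quotient.

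The main obstacle I anticipate is precisely the base-case identification of $\operatorname{im}\iota^{*}$ with the maps that factor through \emph{some} projective, rather than through $PM$ specifically. This is where self-injectivity of projectives (\cref{Frobenius}) is indispensable and where a purely abelian-category argument would break down: without injectivity of $Q$, there is no reason for a factorization through an arbitrary projective to extend over the chosen cover $PM$. Once that point is settled, the remainder is the routine bookkeeping of the long exact sequence.
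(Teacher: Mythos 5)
Your proof is correct for $k\geqslant 1$, and in substance it is the paper's own argument: the paper splices the sequences $0\to\Omega^{j+1}M\to P\Omega^{j}M\to\Omega^{j}M\to 0$ into a projective resolution of $M$ and cites \cite{margolis}*{Proposition 14.1.8}, and the content of that citation is exactly your two steps --- the dimension shifts $\Ext_{\dHopf}^{k}[M,N]\iso\Ext_{\dHopf}^{k-1}[\Omega M,N]$, and the base-case identification of $\operatorname{im}\iota^{*}$ with the morphisms factoring through \emph{some} projective, which as you say hinges on projectives being injective (\cref{Frobenius}). You were also right to check that $0\to\Omega M\to PM\to M\to 0$ lies in $\Hmodp$ (it is $R$-split since $M$ is $R$-projective), so that $\iota$ is an admissible monomorphism and the extension property of the injective object $Q$ applies, and to invoke \cref{SchanuelLemma} for independence of the chosen cover.

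The one genuine flaw is your treatment of $k=0$. You call it ``the defining identity, read through the stable quotient,'' but $\Ext_{\dHopf}^{0}[M,N]=\Hom_{\dHopf}[M,N]$ is the honest Hom group, not the stable one, and the two differ whenever a nonzero morphism factors through a projective: for $M=N=\dHopf$ one has $\Hom_{\stab{\dHopf}}[\dHopf,\dHopf]=0$ while $\Ext_{\dHopf}^{0}[\dHopf,\dHopf]\neq 0$. So the asserted isomorphism holds only for $k\geqslant 1$; your induction establishes precisely that range and nothing more. This defect is inherited from the statement itself (the paper also writes $k\geqslant 0$), but your write-up should either restrict to $k\geqslant 1$ or replace $\Ext^{0}$ in degree zero by the quotient of $\Hom_{\dHopf}[M,N]$ by maps factoring through projectives (equivalently, pass to Tate Ext).
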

\begin{proof}
	Note that $\Ext_{\dHopf}^{\ast}[-,N]$ is the right derived functor of the left exact functor $\Hom_{\dHopf}[-,N]$ on $\Hmod$ and is hence computed by projective resolutions. But by lemma \ref{lem:Aproj-Rproj}, all projectives in $\Hmod$ are in $\Hmodp$. 
	In particular, any object $M$ in $\Hmodp$ admits a projective resolution 
	\[
		\cdots\longrightarrow P\Omega^kM\longrightarrow\cdots\longrightarrow P\Omega M\longrightarrow PM\longrightarrow M.
	\]
	From which the statement follows as in \cite{margolis}*{Proposition 14.1.8}.
\end{proof}

\section{The \texorpdfstring{$\R$}{R}-motivic scenario}\label{sec:mot}
The discussion in the previous section can be applied to a general setting. 
Yet the goal of this paper is rather humble: to compute the (stable) Picard group of $\cAR$, which is the subalgebra of the \emph{mod $2$ $\R$-motivic Steenrod algebra} $\cA^\R$ generated by $\Sq^1$ and $\Sq^2$.

\subsection{\texorpdfstring{$\R$}{R}-motivic Steenrod algebra}
Our working category $\K$ is the following one:
\begin{description}[wide]
	\item[{$\cat{Vect}[\F_2]$}] the category of \emph{bounded-below bigraded vector spaces} over $\F_2$. 
	Its objects are bigraded $\F_2$-vector spaces 
	$V^{\bullet,\bullet}=\bigoplus_{s,w}V^{s,w}$ with $V^{s,w}$ vanishes for either $s\ll 0$ or $w\ll 0$. 
	Its monoidal structure is given by the graded tensor product, and the symmetry is given by $a\tensor{} b\mapsto(-1)^{\abs{a}\abs{b}}b\tensor{} a$. 
	To keep notations concise, we will omit the sign $(-1)^{\abs{a}\abs{b}}$. 
\end{description}

Our base commutative $\K$-algebra $R^{\bullet,\bullet}$ is given by the \emph{$\R$-motivic cohomology over a point}, denoted by $\MtwoR$. 
Where the first grading is the \emph{classical internal degree} and the second one is the \emph{motivic weight}.
It may be convenient to employ another working category $\cat{Mod}[\MtwoC]$, where $\MtwoC$ is the \emph{$\C$-motivic cohomology over a point}. 
Then $\MtwoR$ can also be viewed as an $\MtwoC$-algebra.

The explicit descriptions of $\MtwoR$ and $\MtwoC$ follow from Voevodsky's proof of the \emph{Milnor conjecture} in \cite{voevodsky2003motivic}*{Corollary 6.9(2) and Corollary 6.10}:
\begin{thm}
	As $\K$-algebras, 
	$\MtwoR\iso \F_2[\tau,\rho]$, 
	and 
	$\MtwoC\iso \F_2[\tau]$, 
	where $\tau$ has degree $(0,1)$ and $\rho$ has degree $(1,1)$.
\end{thm}
\begin{remark}
	It is worth noting that $\MtwoR$ and $\MtwoC$ are local ring objects in $\K$, namely \emph{graded local rings}. 
	Indeed they are \emph{connected} bigraded $\F_2$-algebras. 
	Hence, they have only one maximal homogeneous ideal. 
	As a consequence, any projectives in $\cat{Mod}[\MtwoR]$ (resp. $\cat{Mod}[\MtwoC]$) are automatically free.
\end{remark}

According to a computation of Voevodsky in \cite{voevodsky2003reduced}*{\S 11 and 12}, the \emph{$\R$-motivic Steenrod algebra} $\cA^\R$ can be regarded as an associative algebra over $\MtwoR$, 
generated by the \emph{Steenrod squares} $\Sq^{2i}$ and $\Sq^{2i-1}$ in bidegrees $(2i,i)$ and $(2i-1,i-1)$, respectively, subject to the \emph{$\R$-motivic Adem relations} outlined in \cite{voevodsky2003reduced}*{Theorem 10.2}. 
The whole algebra $\cA^\R$ is not finitely generated as an $\MtwoR$-module, while the subalgebra $\cAR$ has a basis with $8$ generators (see  \cref{fig:motivicA1}). 

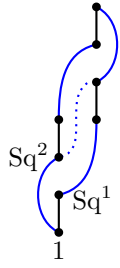
\begin{figure}[ht]
    \centering
    \begin{tikzpicture}\begin{scope}[thick, every node/.style={sloped,allow upside down}, scale=0.5]
    \draw (0,0)  node[inner sep=0] (v00) {} -- (0,1) node[inner sep=0] (v01) {};
    \draw (0,2)  node[inner sep=0] (v11) {} -- (0,3) node[inner sep=0] (v12) {};
    \draw (1,3)  node[inner sep=0] (v22) {} -- (1,4) node[inner sep=0] (v23) {};
    \draw (1,5)  node[inner sep=0] (v33) {} -- (1,6) node[inner sep=0] (v34) {};
    \draw [color=blue] (v00) to [out=150,in=-150] (v11);
    \draw [color=blue] (v01) to [out=15,in=-90] (v22);
    \draw [color=blue] (v12) to [out=90,in=-165] (v33);
    \draw [color=blue] (v23) to [out=30,in=-30] (v34);
    \draw [dotted][color=blue] (v11) to [out=15,in=-150] (.8, 4);
    \filldraw (v00) circle (2.5pt);
    \filldraw (v01) circle (2.5pt);
    \filldraw (v11) circle (2.5pt);
    \filldraw (v12) circle (2.5pt);
    \filldraw (v22) circle (2.5pt);
    \filldraw (v23) circle (2.5pt);
    \filldraw (v33) circle (2.5pt);
    \filldraw (v34) circle (2.5pt);
    \node[below] at (v00) {$1$};
    \node[right,xshift={1.5pt},yshift={-2pt}] at (v01) {$\Sq^1$};
    \node[left] at (v11) {$\Sq^2$};
    \end{scope}\end{tikzpicture}
    \caption{
    We depict $\cAR$ as its free module over $1$: 
		each $\bullet$ represents a $\MtwoR$-generator; 
		the black and blue lines represent the action of $\Sq^1$ and $\Sq^2$, reading from bottom to top, respectively; and 
		a line is dotted means that the action hits the $\tau$-multiple of the given $\MtwoR$-generator. 
    }
    \label{fig:motivicA1}
\end{figure}

Unlike the \emph{$\C$-motivic Steenrod algebra} $\cA^\C$, 
the $\R$-motivic Steenrod algebra $\cA^\R$ does not possess the structure of a Hopf algebra. 
However, its $\MtwoR$-dual, denoted as $\dual{\cA}_\R$, carries a Hopf algebroid structure with $\MtwoR$ as its base. 
The same happens to the subalgebra $\cAR$: its $\MtwoR$-dual, referred to as $\dcAR$, carries the structure of a finite Adams Hopf algebroid structure with $\MtwoR$ as its base. 
Since $\MtwoR$ is a polynomial ring, the stable module category $\stab{\cAR}$ is a rigid category as showed in \cref{sec:background}. 
group.
In particular, by \cref{TrivialsOfPic}, we have the following:
\begin{lem}\label{lem:PicZ3}
    We have an injection of groups
    \[
        \Z^3\longmono \Pic(\cAR),
    \]
    mapping $(s,w,k)\in\Z^3$ to the class $[\Sigma^{s,w}\Omega^k\MtwoR]$, where $\Sigma^{s,w}$ is the grading shifting functor of cohomological degree $(s,w)$ and $\Omega^k$ is the $k$-th algebraic loop functor defined in \cref{def:algloop}.
\end{lem}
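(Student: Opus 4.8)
The plan is to deduce this injection directly from \cref{TrivialsOfPic} by specializing to the case at hand, so that the work is almost entirely in checking hypotheses and matching notation. First I would record the setup: the relevant finite Adams Hopf algebroid is $\Hopf = \dcAR$ with base $R = \MtwoR$, so that $\dHopf = \cAR$ and, in the sense of \cref{defn:Pic}, $\Pic(\cAR) = \Pic(\dcAR)$ is the Picard group of $\stab{\cAR}$. Since $\cAR$ is free of rank $8$ over $\MtwoR$ (\cref{fig:motivicA1}), its $\MtwoR$-dual $\dcAR$ is finite free over $\MtwoR$ as well; alternatively, since $\MtwoR \cong \F_2[\tau,\rho]$ is a polynomial ring, the discussion following \cref{thm:DH-proj} shows that $\Hopf$ is automatically free over $R$ by Quillen--Suslin. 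Either way the standing assumption of \cref{sec:background} — that $\Hopf$ be free over $R$ — is met, so \cref{Frobenius}, the rigidity of $\stab{\cAR}$, and \cref{TrivialsOfPic} are all in force.

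Next I would apply \cref{TrivialsOfPic} with $\Gamma = \Z^2$, the bigrading group of the working category $\cat{Vect}[\F_2]$ recording the internal degree $s$ and the motivic weight $w$. This yields at once an injection
\[
    \Z^2 \times \Z = \Z^3 \longmono \Pic(\cAR),
\]
in which the $\Z^2$-summand is carried by the grading-shift classes $[\Sigma^{s,w}\MtwoR]$ and the remaining $\Z$-summand by the powers $[I]^k$ of the class of the augmentation ideal, which is invertible by \cref{Iisinj}.

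It then remains to reconcile the $\Omega^k$-description in the statement with the $[I]^k$-description produced by the theorem, i.e.\ to verify $[\Omega^k\MtwoR] = [I]^k$ for every $k \in \Z$. For $k \geqslant 0$ this is immediate from the stable equivalence $\Omega M \simeq I \tensor{R} M$ (the corollary to \cref{Iisinj}) applied to the tensor unit $M = \MtwoR$, which gives $\Omega^k\MtwoR \simeq I^{\otimes k}$ by induction. For $k < 0$ I would combine $\Omega^k = D\Omega^{-k}D$ from \cref{def:algloop} with the self-duality $D\MtwoR \cong \MtwoR$ of the tensor unit and the relation $DI \cong I^{-1}$ from \cref{Iisinj}, so that $D(I^{\otimes(-k)}) \cong I^{\otimes k}$ and hence $[\Omega^k\MtwoR] = [I]^k$ again. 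Since $\Sigma^{s,w}$ is tensoring by the invertible object $\Sigma^{s,w}\MtwoR$ and $\Pic(\cAR)$ is abelian, the assignment $(s,w,k) \mapsto [\Sigma^{s,w}\Omega^k\MtwoR]$ coincides with the homomorphism of \cref{TrivialsOfPic} and is therefore injective.

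The argument carries no serious obstacle, being essentially a translation of an already-proved theorem into the motivic notation; the one step demanding genuine attention is the identification $[\Omega^k\MtwoR] = [I]^k$ for negative $k$, where the duality functor $D$ and the relation $DI \cong I^{-1}$ must be invoked rather than the bare recursion $\Omega M \simeq I \tensor{R} M$.
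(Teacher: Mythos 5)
Your proposal is correct and takes essentially the same approach as the paper, which offers no separate proof of \cref{lem:PicZ3} at all but states it as an immediate specialization of \cref{TrivialsOfPic} with $\Gamma=\Z^2$ the motivic bigrading, exactly as you do. Your extra step identifying $[\Omega^k\MtwoR]$ with $[I]^k$ --- via $\Omega M\simeq I\tensor{\MtwoR}M$ for $k\geqslant 0$ and via $\Omega^k=D\Omega^{-k}D$ together with $DI\iso I^{-1}$ for $k<0$ --- simply makes explicit a reconciliation the paper leaves implicit, and is carried out correctly.
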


More concretely, the Picard group $\Pic(\cAR)$ contains at least a free abelian group generated by the isomorphism classes of the following three invertible objects in $\stab{\cAR}$:
\begin{enumerate}
    \item The $\cAR$-module $\Sigma_s:=\Sigma^{1,0}\MtwoR$, that is $\MtwoR$ in degree $(1,0)$ and zero in other degrees. 
    \item The $\cAR$-module $\Sigma_w:=\Sigma^{0,1}\MtwoR$, that is $\MtwoR$ in degree $(0,1)$ and zero in other degrees. 
    \item The augmentation ideal $I$.
\end{enumerate}

Following \cite{adams1976uniqueness}, we should in addition consider the following exotic element:
\begin{enumerate}[resume]
    \item The $\cAR$-module $J:=\cAR/\cAR\Sq^3$ (called the \emph{joker}), depicted as in \cref{fig:joker}. 
		Its inverse is given by switching the dotted line to the bottom.
    \begin{figure}[!h]
        \[
        \begin{tikzpicture}[baseline={([yshift=.5ex]current bounding box.center)}]
        \begin{scope}[ thick, every node/.style={sloped,allow upside down}, scale=0.5]
        \draw (3,0)  node[inner sep=0] (v30) {} -- (3,1) node[inner sep=0] (v31) {};
        \draw  (2.5,2) node[inner sep=0] (v22) {};
        \draw (3,3)  node[inner sep=0] (v33) {} -- (3,4) node[inner sep=0] (v34) {};
        \draw [color=blue] (v31) to [bend right=70] (v33);
        \draw [color=blue] (v30) to [bend left = 50] (v22);
        \draw[dotted] [color=blue] (v22) to [bend left =50] (v34);
        \filldraw (v30) circle (2.5pt);
        \filldraw (v31) circle (2.5pt);
        \filldraw (v22) circle (2.5pt);
        \filldraw (v33) circle (2.5pt);
        \filldraw (v34) circle (2.5pt);
        \node[right] at (v30) {$1$};
        \end{scope}\end{tikzpicture}
        \qquad\leadsto\qquad
        \begin{tikzpicture}[baseline={([yshift=-.5ex]current bounding box.center)}]
        \begin{scope}[ thick, every node/.style={sloped,allow upside down}, scale=0.5, baseline=1cm]
        \draw (3,0)  node[inner sep=0] (v30) {} -- (3,1) node[inner sep=0] (v31) {};
        \draw  (2.5,2) node[inner sep=0] (v22) {};
        \draw (3,3)  node[inner sep=0] (v33) {} -- (3,4) node[inner sep=0] (v34) {};
        \draw [color=blue] (v31) to [bend right=70] (v33);
        \draw[dotted] [color=blue] (v30) to [bend left = 50] (v22);
        \draw [color=blue] (v22) to [bend left =50] (v34);
        \filldraw (v30) circle (2.5pt);
        \filldraw (v31) circle (2.5pt);
        \filldraw (v22) circle (2.5pt);
        \filldraw (v33) circle (2.5pt);
        \filldraw (v34) circle (2.5pt);
        \node[right] at (v34) {$1$};
        \end{scope}\end{tikzpicture}
        \]
        \caption{The $\cAR$-module $J$ and its inverse.} 
        \label{fig:joker}
    \end{figure}
\end{enumerate}

In order to manage the invertibility of $J$, we need to introduce more useful tools.

\subsection{$\rho$ and $\tau$ quotients}\label{sec:quo}

Adding the condition of being free over the coefficient rings, we obtain the following definition.
\begin{defn}
A \emph{finite $\R$-motivic Hopf algebra} $A$ is the $\MtwoR$-dual of a finite Adams Hopf algebroid with base $\MtwoR$. 
In particular, $A$ is finitely generated and free as an $\MtwoR$-module.
A \emph{finite $\C$-motivic Hopf algebra} is a cocommutative bigraded Hopf algebra over $\MtwoC$ that is finitely generated and free as an $\MtwoC$-module. Finally, a \emph{finite classical Hopf algebra} is a finite-dimensional cocommutative bigraded Hopf algebra over $\F_2$.
\end{defn}
\begin{remark}
    Note that A finite $\R$-motivic Hopf algebra is in fact NOT a Hopf algebra over $\MtwoR$.
\end{remark}

Suppose $A$ is a finite $\R$-motivic Hopf algebra. 
Its base change from $\MtwoR$ to $\MtwoC$ gives a finite $\C$-motivic Hopf algebra $A/\rho:= \MtwoC\tensor{\MtwoR}A$. Going one step further, the base change to $\F_2$ gives a finite classical Hopf algebra $A/(\rho, \tau):= \F_2\tensor{\MtwoR}A$. 
Because the finite classical Hopf algebras are better analyzed, we will relate the Picard group of a finite $\R$-motivic Hopf algebra $A$ to the Picard group of its base change $A/(\rho, \tau)$.

Following the same strategy in \cite{gheorghe2018picard}*{\S 3}, we have the followings:
\begin{thm}\label{basechange}
Let $A$ be a finite $\R$-motivic Hopf algebra. 
The base changes from $\MtwoR$ to $\MtwoC$ and to $\F_2$ induce strongly monoidal functors 
\[
	(-)/(\rho,\tau)\colon
	\modp{A}\xrightarrow{(-)/\rho}
	\modp{A/\rho}\xrightarrow{(-)/\tau}
	\modp{A/(\rho,\tau)}
\]
that preserve exact sequences. 
These functors pass to the stable module categories and thus 
induce strongly monoidal triangulated functors
\[
	(-)/(\rho,\tau)\colon
	\stab{A}\xrightarrow{(-)/\rho}
	\stab{A/\rho}\xrightarrow{(-)/\tau}
	\stab{A/(\rho,\tau)}.
\] 
In particular, for any object $M$ in $\modp{A}$, we have 
\[
	DM/(\rho,\tau)\iso 
	D(M/(\rho,\tau)).
\]   
\end{thm}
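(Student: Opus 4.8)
The plan is to prove that $D$ commutes with the base change by reducing to a single base change and then matching the structure maps that define $D$. Since the functor $(-)/(\rho,\tau)$ is the composite $(-)/\tau\circ(-)/\rho$ of base changes along the successive surjections $\MtwoR\to\MtwoC\to\F_2$ of commutative $\K$-algebras, and since the dualization $D$ is defined uniformly over any base, it suffices to establish $DM/\phi\iso D(M/\phi)$ for a single base change $\phi\colon R\to S$ of commutative $\K$-algebras and then compose the two resulting isomorphisms. First I would record that, because $\Hopf$ is finitely generated and free over $R$, base change along $\phi$ sends the finite Adams Hopf algebroid $(\Hopf,R)$ to $(S\tensor{R}\Hopf,S)$ and its dual algebra $\dHopf$ to $S\tensor{R}\dHopf\iso\dual{(S\tensor{R}\Hopf)}$, all compatibly with the comultiplication, conjugation, augmentation, and the two units; in particular the new algebra is again finite Adams over $S$, so $D$ is defined on both sides of the claimed isomorphism.

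Next I would build the comparison map on underlying modules. Recall $DM=c(\dual{M})$, whose underlying $R$-module is $\Hom_R[M,R]$; hence the underlying $S$-module of $DM/\phi$ is $S\tensor{R}\Hom_R[M,R]$, whereas that of $D(M/\phi)$ is $\Hom_S[S\tensor{R}M,S]$. Every object of $\Hmodp$ is finitely generated and free over $R$ by the Quillen--Suslin theorem, so the canonical natural map
\[
	\theta_M\colon S\tensor{R}\Hom_R[M,R]\xrightarrow{\ \sim\ }\Hom_S[S\tensor{R}M,S]
\]
is an isomorphism. This $\theta_M$ is the candidate for the desired isomorphism, and verifying it on underlying modules is routine commutative algebra.

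The substantive step is to check that $\theta_M$ respects the structures producing the two module actions. The $\Hopf$-comodule structure on $\dual{M}$ is assembled from the conjugation $c$, the right unit $\eta$, the pairing $\braket{-}{-}$, and the coevaluation $1_{(-)}\tensor{} 1_{(+)}$ of the finitely generated free duality; each of these data base-changes to the corresponding datum for $(S\tensor{R}\Hopf,S)$, and in particular base change carries the $R$-coevaluation to the $S$-coevaluation by naturality. Feeding this through the explicit Sweedler-notation formula for the coaction on $\dual{M}$ shows that $\theta_M$ is an isomorphism of $(S\tensor{R}\Hopf)$-comodules. Applying the comodule--module equivalence $c$, which is likewise built from the same structure maps and hence also commutes with base change, converts this into an isomorphism of $(S\tensor{R}\dHopf)$-modules, i.e.\ $DM/\phi\iso D(M/\phi)$; composing over $\MtwoR\to\MtwoC\to\F_2$ then yields $DM/(\rho,\tau)\iso D(M/(\rho,\tau))$.

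I expect the bookkeeping in the previous paragraph to be the main obstacle. One must track the conjugation $c$, the right unit $\eta$, and especially the coevaluation element $1_{(-)}\tensor{} 1_{(+)}$ through the functor $S\tensor{R}(-)$ and confirm that $\theta_M$ intertwines the resulting actions exactly. Because all of the ingredients are natural transformations between functors that base change preserves, no genuinely new phenomenon can arise; nevertheless the verification demands a careful diagram chase with the Sweedler formulas rather than a purely formal argument.
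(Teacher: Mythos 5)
Your write-up addresses only the final clause of the theorem, the isomorphism $DM/(\rho,\tau)\iso D(M/(\rho,\tau))$, and for that clause it is correct but heavier than necessary. The paper's ``in particular'' signals that this clause is a formal consequence of the rest of the statement: $DM$ is the left dual of $M$ in $\modp{A}$ (\cref{prop:RigidityOfHmodp}), and any strongly monoidal functor preserves dualizable objects and their duals, so once strong monoidality of $(-)/\rho$ and $(-)/\tau$ is in hand the compatibility with $D$ costs nothing. Your explicit comparison map $\theta_M\colon S\tensor{R}\Hom_R[M,R]\to\Hom_S[S\tensor{R}M,S]$, together with the Sweedler-notation check that it intertwines the coactions built from $c$, $\eta$, the pairing, and the coevaluation, is a sound hands-on substitute for this formal fact (and note that $\theta_M$ is an isomorphism already because objects of $\modp{A}$ are finitely generated projective over the base; the appeal to Quillen--Suslin for freeness is not needed). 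But the ``careful diagram chase'' you defer at the end is exactly the verification that strong monoidality makes unnecessary, so you have replaced the paper's one-line deduction with the hardest bookkeeping available.

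The genuine gap is that the body of the theorem is never proved. First, exactness: $S\tensor{R}(-)$ is only right exact in general, so ``preserves exact sequences'' requires an argument; the paper's point is that a short exact sequence in $\modp{A}$ splits over the base ring, since its last term is projective over $\MtwoR$ (respectively $\MtwoC$), and a split exact sequence stays exact under any base change. Your proposal never mentions this, and without it the claim is false for arbitrary modules. Second, the passage to stable categories: one must check that $(-)/\rho$ and $(-)/\tau$ send projectives to projectives --- immediate since a free $A$-module base-changes to a free $A/\rho$-module --- so that morphisms factoring through projectives are sent to such, and the functors descend to $\stab{A}\to\stab{A/\rho}\to\stab{A/(\rho,\tau)}$; combined with exactness this is also what makes the induced functors compatible with $\Omega^{-1}$ and with the distinguished triangles, i.e.\ triangulated. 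Your appeal to standard base-change results for strong monoidality matches the paper and is fine, but the splitting argument for exactness and the preservation-of-projectives argument for stable descent are short, indispensable, and absent from your proposal.
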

\begin{proof}
	The strong monoidalities follow from standard results on base changes. 
	The exactness follow from the fact that short exact sequences split in these categories.
	Finally, the functors pass to the stable module categories since they preserve freeness and exact sequences.
\end{proof}

We can relate the projectivity over $A$ with its base change.

\begin{cor}\label{proj=free}
    Let $A$ be a finite $\R$-motivic Hopf algebra. 
		Then, for any object $M$ in $\modp{A}$, the following are equivalent:
    \begin{enumerate}[label=\textup{(\arabic*)}]
        \item $M$ is projective as an $A$-module;
        \item $M/\rho$ is projective as an $A/\rho$-module;
        \item $M/(\rho,\tau)$ is projective as an $A/(\rho,\tau)$-module;
        \item $M/(\rho,\tau)$ is free as an $A/(\rho,\tau)$-module;
        \item $M/\rho$ is free as an $A/\rho$-module;
        \item $M$ is free as an $A$-module.
    \end{enumerate}
\end{cor}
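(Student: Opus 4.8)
The plan is to split the six conditions into the trivial direction ``free $\Rightarrow$ projective'' at each of the three levels, two downward base-change implications, the classical identification of projective with free, and a single upward Nakayama-type lift that closes the loop. Explicitly I would prove
\[
(1)\Rightarrow(2)\Rightarrow(3)\Rightarrow(4)\Rightarrow(6)\Rightarrow(1),
\qquad\text{together with}\qquad
(6)\Rightarrow(5)\Rightarrow(2),
\]
which makes all six statements equivalent.

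For the cheap implications, a free module is projective, giving $(6)\Rightarrow(1)$, $(5)\Rightarrow(2)$ and $(4)\Rightarrow(3)$. By \cref{basechange} the base-change functors are additive and carry free modules to free modules; since a projective is a direct summand of a finitely generated free module, these functors also preserve projectivity, which yields the downward chain $(1)\Rightarrow(2)\Rightarrow(3)$ as well as $(6)\Rightarrow(5)$. For $(3)\Rightarrow(4)$ I would invoke that $A/(\rho,\tau)$ is a finite-dimensional connected bigraded Hopf algebra over $\F_2$; its augmentation ideal is nilpotent, so $A/(\rho,\tau)$ is a graded-local ring, and over such a ring every finitely generated graded projective module is free. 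This settles $(3)\Leftrightarrow(4)$ and, together with $(5)\Rightarrow(2)$, incorporates $(5)$.

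The real content is $(4)\Rightarrow(6)$, where I lift a free basis from $\F_2$ all the way to $\MtwoR$. Choose a homogeneous $A/(\rho,\tau)$-basis $\bar m_1,\dots,\bar m_n$ of $M/(\rho,\tau)$, lift each to a homogeneous $m_i\in M$, and let $\phi\colon F\to M$ be the $A$-linear map from the graded free module $F$ of rank $n$ sending the $i$-th generator to $m_i$. Applying $-\tensor{\MtwoR}\F_2$ (that is, reduction modulo $\mm=(\rho,\tau)$) makes $\phi$ hit the chosen basis, so $(\coker\phi)\tensor{\MtwoR}\F_2=0$; as $\coker\phi$ is finitely generated over the connected graded ring $\MtwoR$, graded Nakayama forces $\coker\phi=0$, i.e.\ $\phi$ is onto. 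For injectivity, every object of $\modp{A}$ is finitely generated projective, hence free, over $\MtwoR$ (Quillen--Suslin for the polynomial ring $\MtwoR$); thus the sequence $0\to\ker\phi\to F\to M\to 0$ splits over $\MtwoR$, so $-\tensor{\MtwoR}\F_2$ keeps it exact and identifies $(\ker\phi)\tensor{\MtwoR}\F_2$ with the kernel of $\phi\tensor{\MtwoR}\F_2\colon F/(\rho,\tau)\to M/(\rho,\tau)$. The latter is an isomorphism because the $\bar m_i$ form a basis, so $(\ker\phi)\tensor{\MtwoR}\F_2=0$ and graded Nakayama gives $\ker\phi=0$. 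Hence $\phi$ is an isomorphism and $M$ is free over $A$.

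The main obstacle is exactly this last step. Surjectivity of $\phi$ is the soft part, handed to us by Nakayama; the delicate point is injectivity, which fails for a general Nakayama lift and here relies crucially on both $F$ and $M$ being \emph{free} over the base $\MtwoR$. That freeness is what makes the presentation of $M$ split $\MtwoR$-linearly, so that reduction modulo $\mm$ stays exact and a genuine isomorphism over $\F_2$ propagates back to an isomorphism over $\MtwoR$. Without the $\MtwoR$-freeness guaranteed by membership in $\modp{A}$, the kernel could carry $\mm$-torsion invisible modulo $\mm$, and the argument would collapse.
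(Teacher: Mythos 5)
Your proof is correct, and its engine is the same one the paper uses: lift a homogeneous basis, get surjectivity from the graded Nakayama lemma, then exploit the fact that every object of $\modp{A}$ is projective (hence free) over the base ring so that the resulting presentation splits $\MtwoR$-linearly, making reduction exact and allowing a second application of Nakayama to kill the kernel. The organizational difference is in the decomposition: the paper proves the chain $(1)\Rightarrow(2)\Rightarrow(3)\Rightarrow(4)\Rightarrow(5)\Rightarrow(6)$, performing two one-variable lifts --- from $A/(\rho,\tau)$ to $A/\rho$ modulo $\tau$, and from $A/\rho$ to $A$ modulo $\rho$ (it writes out only $(5)\Rightarrow(6)$, declaring $(4)\Rightarrow(5)$ essentially the same, and uses the exactness of base change from \cref{basechange} where you invoke the $\MtwoR$-splitting directly) --- whereas you collapse both steps into a single two-variable lift $(4)\Rightarrow(6)$ modulo $\mm=(\rho,\tau)$ and then recover statement $(5)$ cheaply through $(6)\Rightarrow(5)\Rightarrow(2)$. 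Nothing is lost either way: the paper's factorization keeps the intermediate $\C$-motivic level explicit, which it needs elsewhere (e.g.\ in \cref{lem:ReflectMonosR,lem:ReflectMonosC} and \cref{injPic}), while your one-step version is marginally shorter. One remark on $(3)\Rightarrow(4)$: your justification (the augmentation ideal of the finite-dimensional connected bigraded Hopf algebra $A/(\rho,\tau)$ is nilpotent, so the ring is graded local and finitely generated graded projectives are free) is in fact more to the point than the paper's terse appeal to $\MtwoR$ being a graded local ring, though it tacitly assumes $A/(\rho,\tau)$ is connected --- true for $\cAR/(\rho,\tau)$ and the intended applications, but not literally built into the definition of a finite $\R$-motivic Hopf algebra; the paper glosses this point as well, so it is not a gap relative to the original argument.
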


\begin{proof}

Note that: 
(1)$\Rightarrow$(2)$\Rightarrow$(3) follow from \cref{basechange}, and that (3)$\Rightarrow$(4) is a consequence of the fact that $\MtwoR$ is a graded local ring. 
The proofs of (4)$\Rightarrow$(5) and (5)$\Rightarrow$(6) are essentially the same. 
Here we will only present the latter.

(5)$\Rightarrow$(6). 
Given a basis of the free $A/\rho$-module $M/\rho$, we can lift it to a free $A$-module $F$ with a map $f\colon F\to M$. 
By assumption, $f/\rho$ is an epimorphism. 
Hence, by the \emph{Graded Nakayama Lemma}, $f$ is also an epimorphism. 
Consider the following exact sequence. 
\[\begin{tikzcd}
0 & {\Ker[f]} & {F} & {M} & 0
\arrow[from=1-1, to=1-2]
\arrow[from=1-2, to=1-3]
\arrow[from=1-3, to=1-4,"{f}"]
\arrow[from=1-4, to=1-5]
\end{tikzcd}\]

Since $M$ is projective over $\MtwoR$, the exact sequence splits, and thus $\Ker[f]$ is a retraction of $F$, which is a free $\MtwoR$-module. 
Hence, $M$ is also projective over $\MtwoR$, and thus the above short exact sequence lives in the exact category $\modp{A}$. 
By \cref{basechange}, its base change to $\MtwoC$ is again exact. By assumption, $f/\rho$ is an isomorphism. 
Hence, $\Ker[f]/\rho=\Ker[f/\rho]=0$. 
Applying the \emph{Graded Nakayama Lemma}, we see that $\Ker[f]$ vanishes. 
Therefore, $f$ is an isomorphism.
\end{proof}

Note that, although \cref{basechange} tells us that the base changes preserve exact sequences, 
it does not thus imply that the base changes reflect 
monomorphisms since an exact subcategory needs not to 
be closed under taking cokernel.
However, the following lemmas show that 
the base changes do reflect monomorphisms. 
\begin{lem}\label{lem:ReflectMonosR}
	Let $A$ be a finite $\R$-motivic Hopf algebra. 
	If a morphism $f\colon M\to N$ in $\modp{A}$ induces a monomorphism  $M/\rho\to N/\rho$, then $f$ itself is also monic. 
	Moreover, the cokernel of $f$ is also in $\modp{A}$.
\end{lem}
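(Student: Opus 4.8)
The plan is to read the hypothesis in the exact-category sense used in the surrounding discussion: to say that $f/\rho$ is a \emph{monomorphism} in $\modp{A/\rho}$ is to say it is an admissible monomorphism, i.e.\ it is injective \emph{and} its cokernel again lies in $\modp{A/\rho}$, hence is free over $\MtwoC$. (This is precisely what ``reflecting monomorphisms'' should mean, and the conclusion of the lemma is exactly that $f$ is then an admissible monomorphism in $\modp{A}$.) The whole argument will rest on the short exact sequence $0\to M\xrightarrow{\rho}M\to M/\rho\to 0$, which is exact because $M$ is free — in particular $\rho$ is a nonzerodivisor on it, $\MtwoR$ being a domain — and likewise for $N$. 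Applying the snake lemma to the ladder that these two sequences form under $f$ yields
\[
0 \to \Ker[f] \xrightarrow{\rho} \Ker[f] \to \Ker[f/\rho] \to \coker[f] \xrightarrow{\rho} \coker[f] \to \coker[f/\rho] \to 0.
\]

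First I would prove that $f$ is monic. Since $f/\rho$ is injective we have $\Ker[f/\rho]=0$, so the displayed sequence forces $\rho\colon\Ker[f]\to\Ker[f]$ to be surjective; it is also injective, being the restriction of multiplication by $\rho$ on the torsion-free module $M$. Thus $\Ker[f]=\rho\,\Ker[f]\subseteq\mm\,\Ker[f]$, where $\mm=(\tau,\rho)$ is the maximal homogeneous ideal, and since $\Ker[f]$ is a finitely generated bounded-below graded module, the \emph{Graded Nakayama Lemma} gives $\Ker[f]=0$. The same portion of the sequence now shows that $\rho$ is a nonzerodivisor on $C:=\coker[f]$ and that $C/\rho C\cong\coker[f/\rho]$, which by the reformulated hypothesis is free over $\MtwoC$.

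It remains to show $C$ is free over $\MtwoR$, and this is the heart of the lemma. I would isolate the self-contained statement: a finitely generated bounded-below graded $\MtwoR$-module $C$ on which $\rho$ acts injectively and whose reduction $C/\rho C$ is free over $\MtwoC=\MtwoR/\rho$ is itself free. Choosing a homogeneous $\MtwoC$-basis of $C/\rho C$ and lifting it to homogeneous elements $c_i\in C$, one first checks these generate: modulo $\mm$ they map to a basis of $C/\mm C=(C/\rho C)/\tau\,(C/\rho C)$, so Graded Nakayama applies. For independence, suppose $\sum_i a_i c_i=0$ with $a_i\in\MtwoR$ not all zero; factoring out the highest common power of $\rho$ (using that $\MtwoR$ is a polynomial ring) and cancelling it via the fact that $\rho$ is a nonzerodivisor on $C$, then reducing mod $\rho$, contradicts the freeness of $C/\rho C$. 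Hence the $c_i$ form an $\MtwoR$-basis, so $C$ is free; being a quotient of $N$ it is also finitely generated over $A$, and therefore $C\in\modp{A}$.

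The main obstacle is this last step. The monicity of $f$ is a formal snake-lemma-plus-Nakayama argument, but establishing that the cokernel is genuinely \emph{free} — not merely $\rho$-torsion-free with free reduction — requires using the polynomial structure of $\MtwoR$ and the interplay between the $\rho$-adic filtration and the chosen basis of $C/\rho C$. I expect the assumption that $\coker[f/\rho]$ is free over $\MtwoC$ (rather than $f/\rho$ being merely injective) to be indispensable here, since the mod-$\rho$ reduction on its own does not control the $\tau$-torsion of $C$; indeed without it one can produce a map whose underlying reduction is injective but whose cokernel fails to be $\MtwoR$-free.
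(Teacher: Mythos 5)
Your proof is correct, and it takes a genuinely different route from the paper's. The paper deduces this lemma (together with its $\C$-motivic twin, \cref{lem:ReflectMonosC}) from an unlabelled technical lemma over a general noetherian local ring object $(R,\mm)$: monicity of $f$ modulo $\mm$ is propagated to all truncations $f\otimes_{R}R/\mm^{n}$ by induction and the Five Lemma, $\Ker[f]=0$ then follows from $\bigcap_{n}\mm^{n}M=0$, and flatness of $\Coker[f]$ is extracted from a Tor-vanishing argument over every ideal $I$ of $R$, with ``finitely presented $+$ flat $\Rightarrow$ free'' closing the loop. You instead work concretely over $\MtwoR=\F_2[\tau,\rho]$: the snake lemma applied to multiplication by $\rho$ plus graded Nakayama gives monicity, and freeness of the cokernel $C$ is proved by lifting a homogeneous $\MtwoC$-basis of $C/\rho C$ and cancelling common $\rho$-powers in any relation. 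Your argument is more elementary and self-contained; the paper's buys generality (any noetherian local base) and handles both base-change lemmas uniformly at the cost of heavier Tor machinery. One further point in your favor: your insistence on reading the hypothesis as an \emph{admissible} monomorphism (cokernel of $f/\rho$ again in $\modp{A/\rho}$, hence $\MtwoC$-free) is not a convenience but a necessity, and the paper tacitly needs it as well — its technical lemma takes as input monicity modulo the \emph{maximal} ideal $(\rho,\tau)$, and plain injectivity of $f/\rho$ does not imply injectivity of $f/(\rho,\tau)$. Multiplication by $\tau$ on $\MtwoR$ realizes exactly the counterexample you predicted: its reduction mod $\rho$ is injective, yet its cokernel $\MtwoR/\tau$ is not $\MtwoR$-free, so the second conclusion fails under the naive reading. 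With the admissible reading, the cokernel of $f/\rho$ being $\MtwoC$-free makes the mod-$\tau$ reduction stay injective, and both your route and the paper's go through; in the paper's actual application (\cref{injPic}) the map $f/\rho$ is a split inclusion with free cokernel, so the stronger hypothesis is indeed available there.
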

\begin{lem}\label{lem:ReflectMonosC}
	Let $A$ be a finite $\C$-motivic Hopf algebra. 
	If a morphism $f\colon M\to N$ in $\modp{A}$ induces a monomorphism  $M/\tau\to N/\tau$, then $f$ itself is also monic. 
	Moreover, the cokernel of $f$ is also in $\modp{A}$.
\end{lem}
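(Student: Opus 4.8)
The plan is to push the entire question down to the base ring $\MtwoC\iso\F_2[\tau]$, which is a graded principal ideal domain, and to exploit that every object of $\modp{A}$ is free over $\MtwoC$ (by the remark that projectives over the graded local ring $\MtwoC$ are free), hence $\tau$-torsion free. The base change $(-)/\tau=\F_2\tensor{\MtwoC}(-)$ is only right exact, so the real content of the lemma is the control of the first derived functor. From the length-one free resolution $0\to\MtwoC\xrightarrow{\tau}\MtwoC\to\F_2\to0$ one reads off $\Tor_1^{\MtwoC}[\F_2,X]\iso X[\tau]$, the \emph{$\tau$-torsion submodule} of $X$; in particular this $\Tor$ vanishes whenever $X$ is free over $\MtwoC$, and more generally whenever $X$ is a submodule of a free $\MtwoC$-module, since submodules of free modules over a PID are again free.

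First I would prove monicity. Set $K=\Ker[f]$; as a submodule of the free $\MtwoC$-module $M$ it is again free. The quotient $M/K$ is isomorphic to the image of $f$, a submodule of the free module $N$, hence free; therefore $\Tor_1^{\MtwoC}[\F_2,M/K]=0$ and the sequence $0\to K/\tau\to M/\tau\to (M/K)/\tau\to0$ remains exact, so $K/\tau\to M/\tau$ is injective. Since the composite $K\hookrightarrow M\xrightarrow{f}N$ is zero, its reduction modulo $\tau$ shows that $K/\tau$ maps into $\Ker[f/\tau]=0$; combined with the injectivity just noted, $K/\tau=0$. As $K$ is finitely generated over the noetherian ring $\MtwoC$, the Graded Nakayama Lemma forces $K=0$, i.e. $f$ is monic.

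It remains to identify the cokernel $C=\coker f$. Now that $f$ is monic we have a genuine short exact sequence $0\to M\xrightarrow{f}N\to C\to0$, and since $N$ is free the long exact $\Tor$-sequence begins $0\to\Tor_1^{\MtwoC}[\F_2,C]\to M/\tau\xrightarrow{f/\tau}N/\tau$. By hypothesis $f/\tau$ is monic, so the connecting map has image $\Ker[f/\tau]=0$ and, being injective, yields $\Tor_1^{\MtwoC}[\F_2,C]=C[\tau]=0$. Thus $C$ is a finitely generated, $\tau$-torsion free graded module over the graded PID $\MtwoC$, hence free; being the cokernel of an $A$-linear map it is an $A$-module, and so it lands in $\modp{A}$, as required.

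The only genuinely delicate point, and the place where the $\C$-motivic hypothesis (so that the base is the one-variable ring $\F_2[\tau]$) is indispensable, is the final implication ``$\tau$-torsion free $\Rightarrow$ free'': this is exactly the PID structure theorem and has no direct analogue over a two-variable base. Everything else is formal bookkeeping with the $\Tor$ long exact sequence; the one thing to keep straight is that right-exactness of $(-)/\tau$ is by itself insufficient, which is precisely why identifying $\Tor_1^{\MtwoC}[\F_2,-]$ with $\tau$-torsion and invoking the torsion-freeness of (sub)modules of free modules is what does the real work.
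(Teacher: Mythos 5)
Your proof is correct, but it takes a genuinely different route from the paper's. The paper deduces this lemma and its $\R$-motivic companion \cref{lem:ReflectMonosR} simultaneously from a single technical lemma valid over any noetherian local ring object $(R,\mm)$: monicity of $f$ is proved by showing each reduction $f_n=f\tensor{R}R/\mm^n$ is monic, by induction on $n$ using the filtration quotients $\mm^n/\mm^{n+1}\iso R/\mm$ and the Five Lemma, and then concluding $\Ker[f]\subseteq\bigcap_n\mm^nM=0$; flatness of $\Coker[f]$ is then obtained by checking $\Tor^{1}[\Coker[f],R/I]=0$ for \emph{every} ideal $I$, which reduces, via the first part applied over $(R/I,\mm/I\mm)$, to $f/I$ being monic, and finally ``finitely presented $+$ flat $\Rightarrow$ free'' places the cokernel in $\modp{A}$. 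You instead exploit what is special to the $\C$-motivic base, namely that $\MtwoC\iso\F_2[\tau]$ is a graded principal ideal domain: you identify $\Tor^{\MtwoC}_{1}[\F_2,X]$ with the $\tau$-torsion of $X$, use that finitely generated torsion-free graded modules over $\F_2[\tau]$ (in particular submodules of free ones) are free, and run two short $\Tor$ sequences---one to get $\Ker[f]/\tau=0$, after which graded Nakayama kills $\Ker[f]$ exactly as in the paper's \cref{proj=free}, and one to show $\Coker[f]$ is $\tau$-torsion free, hence free over $\MtwoC$. Both arguments are sound; yours is shorter and more elementary, and you correctly flag its limitation: the structure theorem is unavailable over the two-variable base $\MtwoR\iso\F_2[\tau,\rho]$, so your method does not extend to \cref{lem:ReflectMonosR}, whereas the paper's $\mm$-adic filtration argument covers both lemmas (indeed any noetherian local base, and with ``flat'' in place of ``free'') uniformly. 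One minor economy: in your monicity step you do not need $M/K$ to be free; torsion-freeness of the image inside the free module $N$ already gives $\Tor^{\MtwoC}_{1}[\F_2,M/K]=0$.
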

The two lemmas both follow from the following technique lemma (notice that, finitely presented $+$ flat $\implies$ free).
\begin{lem}
	Let $(R,\mm)$ be a noetherian local ring object in a locally noetherian symmetric monoidal abelian category $\K$. 
	If a morphism $f\colon M\to N$ of finitely generated flat $R$-modules induces a monomorphism $M/\mm M\to N/\mm N$, then $f$ itself is also monic. 
	Moreover, the cokernel of $f$ is also flat.
\end{lem}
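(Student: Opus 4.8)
The plan is to reduce everything to \emph{free} modules and then transport isomorphism data across $\mm$ by two applications of the \emph{Graded Nakayama Lemma}. First I would record, using finitely presented $+$ flat $\implies$ free, that $M$ and $N$ are free $R$-modules. Reducing modulo $\mm$, the hypothesis says that $\bar f\colon M/\mm M\to N/\mm N$ is a monomorphism of free modules over the residue field $R/\mm$. Over a field object every monomorphism splits and every module is free, so I may choose a complement and write $N/\mm N\iso \bar f(M/\mm M)\oplus W$ with $W$ free over $R/\mm$.

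Next I would lift this splitting to $R$. Let $P$ be a free $R$-module with $P/\mm P\iso W$, and use the projectivity of $P$ together with the epimorphism $N\longepi N/\mm N$ to lift the inclusion $W\hookrightarrow N/\mm N$ to a map $g\colon P\to N$. Forming
\[
	\phi\coloneq f\oplus g\colon M\oplus P\longrightarrow N,
\]
its reduction $\bar\phi\colon (M\oplus P)/\mm\to N/\mm$ is an isomorphism by construction, and both source and target are free.

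The heart of the argument is to upgrade ``$\bar\phi$ is an isomorphism'' to ``$\phi$ is an isomorphism''. Since $\bar\phi$ is epic, Graded Nakayama makes $\phi$ epic; as $N$ is free, hence projective, the sequence $0\to\Ker[\phi]\to M\oplus P\to N\to 0$ splits, so $M\oplus P\iso \Ker[\phi]\oplus N$. Reducing this splitting modulo $\mm$ and comparing with the injectivity of $\bar\phi$ forces $\Ker[\phi]/\mm\,\Ker[\phi]=0$; since $\Ker[\phi]$ is finitely generated as a summand of a free module over a noetherian ring, a second application of Graded Nakayama gives $\Ker[\phi]=0$. Thus $\phi$ is an isomorphism. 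Restricting along the summand inclusion $\iota_M\colon M\hookrightarrow M\oplus P$ exhibits $f=\phi\circ\iota_M$ as a split monomorphism with retraction the composite of $\phi^{-1}$ and the projection onto $M$, and it identifies $\coker[f]\iso P$, which is free and therefore flat. This settles both assertions simultaneously.

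The main obstacle is categorical rather than algebraic: the whole argument must be phrased for module objects in $\K$ rather than for ordinary modules. The delicate points are (i) that the residue object $R/\mm$ behaves like a field, so that $\bar f$ splits and $W$ is free, and (ii) the construction of the complement $P$ and of the lift $g$, which I would carry out through generalized elements and the projectivity of free objects; granting these, the two invocations of Graded Nakayama are exactly the classical ones. One can alternatively avoid constructing the complement by a homological argument: flatness of $M$ and $N$ together with injectivity of $\bar f$ give $\Tor^{R}_{1}[\coker[f],R/\mm]=0$, whence $\coker[f]$ is free by the local criterion for flatness, after which flatness of $\Ker[f]$ and the vanishing $\Ker[f]/\mm\,\Ker[f]=0$ yield $\Ker[f]=0$ by Graded Nakayama.
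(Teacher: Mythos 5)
Your proof is correct, but it takes a genuinely different route from the paper's. The paper never reduces to free modules: it proves monicity of $f$ directly from flatness by inducting up the $\mm$-adic filtration --- the base changes $f_n=f\tensor{R}R/\mm^n$ are shown monic via the \emph{Five Lemma} applied to the rows $0\to M\tensor{R}\mm^n/\mm^{n+1}\to M/\mm^{n+1}M\to M/\mm^{n}M\to 0$ (exact by flatness of $M$ and $N$), and then $\Ker[f]\subseteq\bigcap_n\mm^nM=0$ by Krull intersection. For the cokernel it checks $\Tor^{1}[\Coker[f],R/I]=0$ against \emph{every} ideal $I$, by applying the first part of the lemma to the local ring $(R/I,\mm/I\mm)$ and the map $f/I$ --- and there the lemma's ``flat'' (rather than ``free'') hypothesis does real work, since $M/IM$ is a priori only flat over $R/I$. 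You instead collapse ``flat'' to ``free'' at the outset via finitely generated ${}+{}$ flat ${}\implies{}$ free (legitimate over a noetherian local ring, and an implication the paper itself invokes right before this lemma), then build an explicit complement $P$ and upgrade $\bar\phi$ to an isomorphism with two Nakayama applications. What you buy is a strictly stronger conclusion --- $f$ is a \emph{split} monomorphism with \emph{free} cokernel --- obtained with less homological machinery; what the paper's route buys is independence from the freeness reduction and from splitting over the residue object (your argument needs $R/\mm$ to behave as a graded field so that $\bar f$ splits and $W$ is free, though the paper's identification of $\mm^n/\mm^{n+1}$ with copies of $R/\mm$ quietly needs the same), together with a formulation that bootstraps cleanly over all quotients $R/I$. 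Your closing homological alternative ($\Tor^{1}[\Coker[f],R/\mm]=0$, local criterion, then Nakayama on $\Ker[f]$) is essentially a hybrid of the two and also works, with one small correction: the vanishing $\Ker[f]/\mm\Ker[f]=0$ comes from flatness of the \emph{image} of $f$ (deduced from flatness of $N$ and of $\Coker[f]$), which makes the reduction of $0\to\Ker[f]\to M\to\operatorname{im}f\to 0$ exact; flatness of $\Ker[f]$ itself is not what is used there.
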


\begin{proof}
	For any $n$, let $f_n$ denote the base change $f\tensor{R}R/\mm^n$. 
	We show that $f_n$ are monomorphisms by induction. 
	First, $f_1$ is monic by assumption. 
	For the induction, consider the following commutative diagram:
	\[\begin{tikzcd}
	0 & {M\tensor{R}\mm^n/\mm^{n+1}} & {M/\mm^{n+1}M} & {M/\mm^{n}M} & 0 \\
	0 & {N\tensor{R}\mm^n/\mm^{n+1}} & {N/\mm^{n+1}N} & {N/\mm^{n}N} & 0
	\arrow[from=1-1, to=1-2]
	\arrow[from=1-2, to=1-3]
	\arrow[from=1-3, to=1-4]
	\arrow[from=1-4, to=1-5]
	\arrow[from=2-1, to=2-2]
	\arrow[from=2-2, to=2-3]
	\arrow[from=2-3, to=2-4]
	\arrow[from=2-4, to=2-5]
	\arrow[from=1-2, to=2-2,"{f\tensor{R}\mm^n/\mm^{n+1}}"]
	\arrow[from=1-3, to=2-3,"{f_{n+1}}"]
	\arrow[from=1-4, to=2-4,"{f_{n}}"]
	\end{tikzcd}\]
	where the horizontals are exact since both $M$ and $N$ are flat. 
	The first vertical arrow $f\tensor{R}\mm^n/\mm^{n+1}\iso f_1$ is monic since $\mm^n/\mm^{n+1}\iso R/\mm$ identifies it with $f_1$. 
	The last vertical arrow $f_{n}$ is monic by inductive hypothesis. 
	Therefore, $f_{n+1}$ is monic by the \emph{Five Lemma}.

	For each $n$, since $f_{n}$ is monic, the composition $\Ker[f]\to M\to M/\mm^n M$ is zero. 
	Hence, $\Ker[f]$ is a subobject of $\mm^n M$. 
	Since $M$ is finitely generated over $R$, the limit $\bigcap_n\mm^n M$ vanishes. 
	Hence, $\Ker[f]=0$ and $f$ is monic. 

	To show $\Coker[f]$ is flat, it suffices to show $\Tor^{1}[\Coker[f],R/I]=0$ for all ideals $I$ of $R$. 
	Since $M$ is flat, we have the following exact sequence.
	\[\begin{tikzcd}
	0 & {\Tor^{1}[\Coker[f],R/I]} & {M/IM} & {N/IN} 
	\arrow[from=1-1, to=1-2]
	\arrow[from=1-2, to=1-3]
	\arrow[from=1-3, to=1-4,"{f/I}"]
	\end{tikzcd}\]
	Hence, to show $\Tor^{1}[\Coker[f],R/I]=0$, it suffices to show $f/I$ is monic. 
	Note that both $M/IM$ and $N/IN$ are finitely generated flat over $R/I$, and that $f/I$ is already monic modulo $\mm$. 
	Hence, applying the first part of the lemma to the local ring $(R/I,\mm/I\mm)$ and the morphism $f/I$, we conclude that $f/I$ is monic.
\end{proof}

\begin{prop}\label{injPic}
	Let $A$ be a finite $\R$-motivic Hopf algebra. 
	The base changes induce homomorphisms between the Picard groups
	\[ 
		\Pic(A)\longrightarrow
		\Pic(A/\rho)\longrightarrow
		\Pic(A/(\rho,\tau)).
	\] 
	Furthermore, they are injective.
\end{prop}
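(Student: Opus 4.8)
\emph{The plan} is to handle the two arrows separately and symmetrically, first checking that each base change induces a group homomorphism on Picard groups, and then that each has trivial kernel. The homomorphism statement is formal: by \cref{basechange} the functors $(-)/\rho$ and $(-)/\tau$ are strongly monoidal and triangulated on the stable module categories, and a strongly monoidal functor sends invertible objects to invertible objects while preserving the tensor product and the unit up to isomorphism; hence it descends to a homomorphism of the groups of invertible isomorphism classes. Since these are group homomorphisms, injectivity is equivalent to triviality of the kernel. I will carry out the argument for $(-)/\rho\colon\Pic(A)\to\Pic(A/\rho)$; the argument for $(-)/\tau$ is identical after replacing $\MtwoR,\MtwoC,\rho$ and \cref{lem:ReflectMonosR} by $\MtwoC,\F_2,\tau$ and \cref{lem:ReflectMonosC}, and using that $\modp{A/\rho}$ is again a Frobenius category over the polynomial base $\MtwoC$.

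So suppose $M$ is invertible over $A$ and $M/\rho$ is stably equivalent to the unit $\MtwoC$; I must show $M$ is stably equivalent to $\MtwoR$. As $\modp{A}$ is a Frobenius category (\cref{Frobenius}), I may replace $M$ by its unique stably minimal representative, i.e.\ one with no free $A$-summand. The crucial step, which I think of as \emph{reflection of free summands}, is to show that such an $M$ has $M/\rho$ also free-summand-free, so that $M/\rho\iso\MtwoC$ on the nose. Indeed, if $M/\rho$ had a free summand it would admit a split monomorphism $\iota\colon A/\rho\to M/\rho$; lifting the image of $1$ to an element $y\in M$ of the same degree and sending $1\mapsto y$ defines an $A$-linear map $\phi\colon A\to M$ (the source being free) with $\phi/\rho=\iota$ monic. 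By \cref{lem:ReflectMonosR}, $\phi$ is then monic with $\Coker[\phi]\in\modp{A}$, giving a short exact sequence
\[
	0\longrightarrow A\xrightarrow{\phi} M\longrightarrow\Coker[\phi]\longrightarrow 0
\]
in $\modp{A}$. Since $A$ is free, hence projective, hence (Frobenius) injective in $\modp{A}$, this sequence splits, exhibiting $A$ as a summand of $M$ and contradicting minimality. Thus $M/\rho$ has no free summand, and being stably trivial it must equal its minimal representative $\MtwoC$.

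It remains to descend from $M/\rho\iso\MtwoC$ to $M\iso\MtwoR$. Since $\MtwoR$ is graded local, $M$ is free over $\MtwoR$, and base change preserves rank, so $\operatorname{rank}_{\MtwoR}M=\operatorname{rank}_{\MtwoC}(M/\rho)=1$; moreover the generator sits in degree $(0,0)$ because the stable equivalence to the unit is unshifted. Hence $M\iso\MtwoR\langle x\rangle$ with $\abs{x}=(0,0)$, and it only remains to see the $A$-action is the augmentation. This is a bidegree count: the action $A\tensor{R}M\to M$ is degree-preserving, the augmentation ideal of $A=\dcAR$ is $\MtwoR$-dual to the strictly positive-degree part of $\cAR$ and so lives in strictly negative internal degrees, while $\MtwoR=\F_2[\tau,\rho]$ is concentrated in non-negative internal degrees; therefore the augmentation ideal kills $x$, and $M\iso\MtwoR$ as $A$-modules. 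This makes $[M]$ trivial, so the first map is injective. The same two steps, run with \cref{lem:ReflectMonosC} and the fact that $\MtwoC=\F_2[\tau]$ is concentrated in internal degree $0$, give injectivity of the second map.

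I expect the reflection-of-free-summands step to be the main obstacle: it is precisely the point where one must know the base change neither creates nor destroys free summands, and it is what genuinely uses the monomorphism-reflecting \cref{lem:ReflectMonosR,lem:ReflectMonosC} in tandem with the Frobenius property (via injectivity of the free module). The homomorphism claim and the final bidegree count are routine once this categorical input is secured; note also that the argument never really uses invertibility of $M$ beyond producing a well-defined stable class, so what is being proved is the slightly stronger statement that each base change reflects stable triviality.
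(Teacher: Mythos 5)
Your proposal is correct and takes essentially the same route as the paper: the homomorphisms come from the strong monoidality of \cref{basechange}, and injectivity rests on the same three ingredients — the Margolis-style splitting of the stably trivial reduction as unit plus free, a lift of the free part to a map from a free $A$-module, and the mono-reflection of \cref{lem:ReflectMonosR,lem:ReflectMonosC} — the only difference being bookkeeping: the paper takes the cokernel of the lifted map $F\to M$ and identifies it with $\MtwoR$, whereas you strip free summands from $M$ first and exclude free summands of $M/\rho$ by contradiction via Frobenius injectivity of $A$. One small slip worth fixing in your closing bidegree count: $A$ here is $\cAR$-like rather than its dual, so its augmentation ideal lies in bidegrees $(s,w)$ with $s>w\geq 0$, which miss the support $0\leq s\leq w$ of $\MtwoR\iso\F_2[\tau,\rho]$; your ``strictly negative internal degrees'' phrasing has the duality reversed, although the corrected count (and hence your identification $M\iso\MtwoR$, the step the paper itself asserts without proof) goes through.
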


\begin{proof}
	The homomorphisms come from \cref{basechange}. 
	Suppose $M$ is an object in $\modp{A}$ such that $M/\rho$ is stably equivalent to $\MtwoC$ as $A/\rho$-modules. 
	Then following the same strategy in \cite{margolis}*{Proposition 14.11}, we can express $M/\rho$ as the direct sum of $\MtwoC$ with a free $A/\rho$-module. 
	We may lift this free $A/\rho$-module to a free $A$-module $F$ with a morphism $f\colon F\to M$ such that $f/\rho$ gives the inclusion $F/\rho\hookrightarrow M/\rho$. 
	Then, by \cref{lem:ReflectMonosR}, we have the following short exact sequence in $\modp{A}$:
	\[
		F\overset{f}{\longmono} M\longepi \Coker[f].
	\]
	Since $\Coker[f]/\rho = \Coker[f/\rho] = \MtwoC$, we conclude that $\Coker[f]\iso\MtwoR$, and thus the class $[M]$ is trivial in $\Pic(A)$.
\end{proof}

\subsection{Margolis homology and a freeness criterion}
Margolis homology is a tool to detect freeness in the classical setting. 
Suppose $a$ is an element in $A$ that squares to zero. For any $A$-module $M$, the Margolis homology $H(M,a)$ is the annihilator of $a$ modulo the submodule $aM$.

We have built strong connections between finitely generated $\MtwoR$-free modules over a finite $\R$-motivic Hopf algebra and their base changes to $\F_2$. 
As a result, we can use the Margolis homology to detect the freeness of the module. 
Classically, $M$ is $\cA(1)$-free if and only if the Margolis homology $H(M,Q_0)$ and $H(M,Q_1)$ vanish. For the $\C$ and $\R$-motivic context, we need to further take $H(M,\Sq^2)$ into account.
\begin{thm}[\citelist{\cite{gheorghe2018picard}*{Propersition 4.7}\cite{bhattacharya2022}*{Corollary 2.4}}]\label{thm:Margolis}
	Let $M$ be an object in the subcategory $\modp{\cAR}$. 
	Then $M$ is free over $\cAR$ if and only if 
	\[
		H(M/(\rho,\tau);\alpha)=0
	\]
	for all $\alpha\in\Set*{Q_0,Q_1,\Sq^2}$.
\end{thm}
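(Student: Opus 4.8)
The plan is to reduce this $\R$-motivic criterion to a purely classical freeness statement over the base change $B\coloneq\cAR/(\rho,\tau)$ and then invoke the cited results. The bridge is \cref{proj=free}: the equivalence of its conditions (4) and (6) (applied with $A=\cAR$) shows that an object $M$ of $\modp{\cAR}$ is free over $\cAR$ if and only if $N\coloneq M/(\rho,\tau)$ is free over $B$. Thus it suffices to prove that, for the finite-dimensional cocommutative $\F_2$-Hopf algebra $B\iso\F_2[D_8]$, a module $N$ is free precisely when $H(N;\alpha)=0$ for every $\alpha\in\Set*{Q_0,Q_1,\Sq^2}$.

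First I would pin down why these three elements are the right ones and, in particular, why each squares to zero in $B$ so that the Margolis homology is even defined. For $Q_0=\Sq^1$ this is the Adem relation $\Sq^1\Sq^1=0$; the Milnor primitive $Q_1=\Sq^2\Sq^1+\Sq^1\Sq^2$ is square-zero for the usual reason. The new feature is $\Sq^2$: in $\cAR$ the motivic Adem relation reads $\Sq^2\Sq^2=\tau\,\Sq^3\Sq^1$, so $\Sq^2$ is \emph{not} square-zero over $\MtwoR$ or $\MtwoC$, but its image in $B$ satisfies $\Sq^2\Sq^2=0$ once $\tau$ is killed. This is exactly why the classical $\cA(1)$-criterion, which needs only $Q_0$ and $Q_1$, must be supplemented by $\Sq^2$ here: the base change $B\iso\F_2[D_8]$ differs from $\cA(1)$ precisely in this relation.

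The easy direction, freeness $\Rightarrow$ vanishing, I would settle by additivity. A direct inspection of the regular module shows $H(B;\alpha)=0$ for each of the three $\alpha$ (equivalently, $B$ is free as a module over the exterior subalgebra $\Lambda(\alpha)=\F_2[\alpha]/(\alpha^2)$, so that $\ker\alpha=\operatorname{im}\alpha$ on $B$). Since $H(-;\alpha)$ commutes with finite direct sums, every free $B$-module is $\alpha$-acyclic, and hence so is any $N$ that is free over $B$.

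The substance, and the step I expect to be the main obstacle, is the converse: simultaneous vanishing $\Rightarrow$ freeness. This is the classical Adams--Margolis-type detection theorem for $B\iso\F_2[D_8]$ supplied by \cite{gheorghe2018picard}*{Proposition 4.7} and \cite{bhattacharya2022}*{Corollary 2.4}. Conceptually it asserts that freeness over the group algebra $\F_2[D_8]$ is detected on a family of exterior (``shifted cyclic'') subalgebras, in the spirit of Dade's lemma and Chouinard's theorem, and that $Q_0,Q_1,\Sq^2$ constitute such a detecting family. The argument splits off a free $B$-summand realizing a chosen Margolis cycle and induces on $\F_2$-dimension; the delicate point is that acyclicity at all three elements at once is needed to guarantee that the candidate summand embeds as a $B$-summand, not merely as a summand over each individual $\Lambda(\alpha)$. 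Granting this classical criterion, combining it with the reduction via \cref{proj=free} proves the theorem.
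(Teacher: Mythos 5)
The paper does not actually prove this theorem: the bracketed citations are the proof, as the statement is imported verbatim from \cite{gheorghe2018picard}*{Proposition 4.7} and \cite{bhattacharya2022}*{Corollary 2.4}, so there is no internal argument to compare against. Your outline is correct and is essentially a reconstruction of how those sources (and this paper's own machinery) would derive it: the reduction of $\cAR$-freeness to freeness of $M/(\rho,\tau)$ over $\cAR/(\rho,\tau)\iso\F_2[D_8]$ is exactly the equivalence (4)$\Leftrightarrow$(6) of \cref{proj=free}, which the paper proves via Nakayama-type arguments independent of \cref{thm:Margolis} (so no circularity), and the residual classical detection statement for $\F_2[D_8]$ at $Q_0$, $Q_1$, $\Sq^2$ is precisely what the cited results supply, so deferring it is legitimate. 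One small correction: in $\cAR$ the Adem relation for $\Sq^2\Sq^2$ carries an additional $\rho$-multiple term beyond $\tau\Sq^3\Sq^1$, but this is immaterial since your argument only uses that the relation vanishes modulo $(\rho,\tau)$, making $\Sq^2$ square-zero in the base change.
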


\begin{cor}\label{cor:Margolis}
	Suppose $f\colon M\to N$ is a morphism in $\modp{\cAR}$. 
	Then $f$ is a stable equivalence if and only if its base change 
	$f/(\rho,\tau)$ induces isomorphisms on Margolis homologies:
	\[
		H(M/(\rho,\tau);\alpha)=H(N/(\rho,\tau);\alpha)
	\]
	for all $\alpha\in\Set*{Q_0,Q_1,\Sq^2}$.
\end{cor}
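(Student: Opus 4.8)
The plan is to reduce the criterion to a freeness statement about the mapping cone of $f$ and then feed that into \cref{thm:Margolis}. Recall that $\stab{\cAR}$ is triangulated with suspension $\Omega^{-1}$, and that a morphism in a triangulated category is an isomorphism exactly when its cone is a zero object; in $\stab{\cAR}$ the zero objects are precisely the free (equivalently projective, by \cref{proj=free}) $\cAR$-modules. So the first step is to realize the cone of $f$ by an honest module. Since $\modp{\cAR}$ is a Frobenius category (\cref{Frobenius}), I can choose an admissible monomorphism $\iota\colon M\to I$ into an injective $=$ free object and form the short exact sequence
\[
	0\longrightarrow M\xrightarrow{(f,\iota)} N\oplus I\longrightarrow C_f\longrightarrow 0 .
\]
Because $\iota$ is split over $\MtwoR$ (an admissible monomorphism of the Frobenius structure), so is $(f,\iota)$; hence $C_f$ is again $\MtwoR$-free and lies in $\modp{\cAR}$, and its class represents the cone of $f$. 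Consequently $f$ is a stable equivalence if and only if $C_f$ is free over $\cAR$.

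Next I would feed $C_f$ into \cref{thm:Margolis}: it is free over $\cAR$ if and only if $H\mleft(C_f/(\rho,\tau);\alpha\mright)=0$ for every $\alpha\in\Set*{Q_0,Q_1,\Sq^2}$. To rewrite these groups in terms of $f/(\rho,\tau)$, I base change the short exact sequence above. By \cref{basechange} the functor $(-)/(\rho,\tau)$ is exact and preserves freeness, so it produces a short exact sequence of modules over the classical algebra $\cAR/(\rho,\tau)$,
\[
	0\longrightarrow M/(\rho,\tau)\longrightarrow N/(\rho,\tau)\oplus I/(\rho,\tau)\longrightarrow C_f/(\rho,\tau)\longrightarrow 0 ,
\]
with $I/(\rho,\tau)$ free. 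This exhibits $C_f/(\rho,\tau)$ as a representative of the cone of $f/(\rho,\tau)$.

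The last step is the long exact sequence in Margolis homology. For each of the three generators the element squares to zero in $\cAR/(\rho,\tau)$ --- for $Q_0$ and $Q_1$ this is classical, while for $\Sq^2$ it holds because the motivic Adem relation $\Sq^2\Sq^2=\tau\,\Sq^3\Sq^1$ becomes trivial after setting $\tau=0$ --- so $H(-;\alpha)$ is the homology of the $2$-periodic complex with differential $\alpha$, and the displayed short exact sequence induces a $2$-periodic long exact sequence. Since the free summand $I/(\rho,\tau)$ has vanishing Margolis homology, this sequence reads
\[
	\cdots\longrightarrow H\mleft(M/(\rho,\tau);\alpha\mright)\xrightarrow{(f/(\rho,\tau))_{*}} H\mleft(N/(\rho,\tau);\alpha\mright)\longrightarrow H\mleft(C_f/(\rho,\tau);\alpha\mright)\longrightarrow\cdots ,
\]
and a short diagram chase shows that $H(C_f/(\rho,\tau);\alpha)=0$ if and only if $(f/(\rho,\tau))_{*}$ is an isomorphism on $H(-;\alpha)$. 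Running this over all $\alpha\in\Set*{Q_0,Q_1,\Sq^2}$ assembles into the stated equivalence.

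I expect the main obstacle to be bookkeeping the cone rather than any deep input: one must guarantee that $C_f$ genuinely lands in $\modp{\cAR}$ (so that \cref{thm:Margolis} applies) and that $(-)/(\rho,\tau)$ carries it to a representative of the cone of $f/(\rho,\tau)$, both of which rest on the exactness and freeness-preservation of the base change in \cref{basechange}. The one genuinely motivic subtlety is verifying that $\Sq^2$ squares to zero after killing $\tau$, which is what legitimizes the Margolis long exact sequence for the third generator.
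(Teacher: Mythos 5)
Your proof is correct and follows essentially the same route as the paper: both arguments reduce the corollary to \cref{thm:Margolis} by modifying $f$ up to a free summand so as to fit it into a short exact sequence in $\modp{\cAR}$, base-changing it via the exactness in \cref{basechange}, and running the long exact sequence in Margolis homology. The only difference is a rotation of the same triangle --- the paper enlarges $M$ by a free module to make $f$ surjective and tests freeness of $\Ker[f]$, whereas you enlarge $N$ via an injective embedding and test freeness of the cone $C_f$.
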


\begin{proof}
	Note that we may always enlarge $M$ by taking direct sum with a free $\cAR$-module to obtain an epimorphism to $N$ which restricts to $f$ on $M$. 
	Hence, we may assume that $f$ is surjective. 
	Since $M$ is projective over $\MtwoR$, we see that $\Ker[f]$ is an object in $\modp{\cAR}$. 
	Hence, the base change to $\F_2$ gives us the following short exact sequence
	\[\begin{tikzcd}
	0 & {\Ker[f]/(\rho,\tau)} & {F/(\rho,\tau)} & {M/(\rho,\tau)} & 0
	\arrow[from=1-1, to=1-2]
	\arrow[from=1-2, to=1-3]
	\arrow[from=1-3, to=1-4,"{f/(\rho,\tau)}"]
	\arrow[from=1-4, to=1-5]
	\end{tikzcd}\]
	The long exact sequences of Margolis homologies thus show that 
	$f/(\rho,\tau)$ induces isomorphism on Margolis homologies 
	if and only if $\Ker[f]/(\rho,\tau)$ has vanishing Margolis homologies.
	By \cref{thm:Margolis}, this is the case if and only if $\Ker[f]$ is a free $\cAR$-module. 
\end{proof}

The Margolis homology provides us a strategy to detect stable  invertiblity.
\begin{thm}\label{Margolisinvert}
	Let $M$ be an object in $\modp{\cAR}$. 
	Then $M$ is stably invertible if and only if 
	$M/(\rho,\tau)$ has one-dimentional Margolis homologies with respect to $Q_0$, $Q_1$, and $\Sq^2$.
\end{thm}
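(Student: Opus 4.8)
The plan is to reduce the whole question to the behaviour of a single map, the evaluation, under the freeness detection of \cref{cor:Margolis}. By \cref{invertibility}, $M$ is stably invertible if and only if the evaluation $DM\tensor{R}M\xrightarrow{eval}\MtwoR$ is a stable equivalence. Applying \cref{cor:Margolis} to this morphism restates the condition: $M$ is stably invertible if and only if $eval/(\rho,\tau)$ induces isomorphisms on the Margolis homologies $H(-;\alpha)$ for every $\alpha\in\Set*{Q_0,Q_1,\Sq^2}$. Finally, \cref{basechange} identifies the base change of the evaluation with a classical evaluation: using strong monoidality together with the isomorphism $DM/(\rho,\tau)\iso D(M/(\rho,\tau))$, the map $eval/(\rho,\tau)$ becomes the evaluation $D\bar M\tensor{\F_2}\bar M\to\F_2$ of the $\cAR/(\rho,\tau)$-module $\bar M\coloneq M/(\rho,\tau)$. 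In this way everything is transported to a computation over the classical Hopf algebra $\cAR/(\rho,\tau)\iso\F_2[D_8]$.

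Next I would compute both sides of this evaluation on Margolis homology. The target is immediate: $\F_2$ is concentrated in a single degree and each $\alpha$ raises degree, so $H(\F_2;\alpha)\iso\F_2$ is one-dimensional for all three operations. For the source, the key point is that, after reduction modulo $(\rho,\tau)$, each $\alpha\in\Set*{Q_0,Q_1,\Sq^2}$ is \emph{primitive} and hence acts as a derivation on tensor products. For $\Sq^2$ this is exactly where the motivic weight enters: its coproduct has the form $\Sq^2\tensor{}1+1\tensor{}\Sq^2+\tau\,(\Sq^1\tensor{}\Sq^1)$, where the correction term is forced to carry a factor of $\tau$ for weight reasons and therefore vanishes after base change to $\F_2$. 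Consequently the Künneth formula applies and gives $H(D\bar M\tensor{\F_2}\bar M;\alpha)\iso H(D\bar M;\alpha)\tensor{\F_2}H(\bar M;\alpha)$. Combining this with the duality isomorphism $H(D\bar M;\alpha)\iso H(\bar M;\alpha)^{\vee}$, valid since $c(\alpha)=\alpha$ for each primitive $\alpha$, I obtain $H(D\bar M\tensor{\F_2}\bar M;\alpha)\iso H(\bar M;\alpha)^{\vee}\tensor{\F_2}H(\bar M;\alpha)$, under which $eval$ corresponds to the canonical evaluation pairing.

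It then remains to record the elementary fact that for a finite-dimensional $\F_2$-vector space $V$ the evaluation pairing $V^{\vee}\tensor{\F_2}V\to\F_2$ is an isomorphism precisely when $\dim_{\F_2}V=1$: if $\dim V=0$ the source vanishes, while if $\dim V\geqslant 2$ the source has dimension $(\dim V)^2\geqslant 4$. Applying this with $V=H(\bar M;\alpha)$ for each $\alpha$ closes the argument: $eval/(\rho,\tau)$ is an isomorphism on all three Margolis homologies exactly when each $H(\bar M;\alpha)$ is one-dimensional, which is the asserted criterion. Since every implication above is an equivalence, this yields the stated biconditional.

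The main obstacle I anticipate is justifying the Künneth computation for $\Sq^2$, since $\Sq^2$ is \emph{not} primitive in $\cAR$ itself and the naive derivation formula fails before reduction; the argument genuinely relies on the vanishing of the $\tau\,\Sq^1\tensor{}\Sq^1$ correction modulo $(\rho,\tau)$, and on checking that the Künneth and duality isomorphisms are natural enough to carry $eval$ to the evaluation pairing. The remaining verifications—primitivity of $Q_0$ and $Q_1$, the identity $c(\alpha)=\alpha$, and the compatibility of the duality isomorphism with the functor $D$—are routine once the coproduct formulas modulo $(\rho,\tau)$ are in hand.
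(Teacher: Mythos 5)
Your proof is correct and follows essentially the same route as the paper's: both reduce to the base change $M/(\rho,\tau)$ and combine \cref{cor:Margolis} with the K\"unneth formula and the duality isomorphism $H(DM/(\rho,\tau);\alpha)\iso \Hom_{\F_2}(H(M/(\rho,\tau);\alpha),\F_2)$, the only repackaging being that you run both directions through the evaluation map via \cref{invertibility}, whereas the paper argues the forward direction with an arbitrary inverse $N$ and the converse with the map $M\tensor{\MtwoR}DM\to\MtwoR$, which is the same evaluation. If anything, your identification of the induced map with the trace pairing $V^{\vee}\tensor{\F_2}V\to\F_2$ makes explicit the nonvanishing that the paper's phrase ``by comparing the dimensions'' leaves implicit.
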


\begin{proof}
	The proof is essentially the same as \cite{gheorghe2018picard}*{Propersition 4.11}. 
	
	Suppose $M$ is stably invertible. 
	Namely, there exists $N$ such that $M\tensor{\MtwoR}N$ is stably equivalent to $\MtwoR$. 
	Then we have isomorphisms on Margolis homologies 
	\begin{align*}		
		\MoveEqLeft
		H(M/(\rho,\tau);\alpha)\otimes H(N/(\rho,\tau);\alpha)\\
		&\iso 
		H(M/(\rho,\tau)\tensor{}N/(\rho,\tau);\alpha)\\
		&\iso 
		H((M\tensor{\MtwoR}N)/(\rho,\tau);\alpha)\\
		&\iso
		H(\F_2;\alpha),
	\end{align*}
	where the first one follows from the Ku\"nneth formula, 
	the second follows from \cref{basechange}, and 
	the last follows from \cref{cor:Margolis} when 
	$\alpha\in\Set*{Q_0,Q_1,\Sq^2}$. 
	Thus, $H(M/(\rho,\tau);\alpha)$ is one-dimentional since $H(\F_2;\alpha)$ is one-dimensional.

	Now we assume $H(M/(\rho,\tau);\alpha)$ is one-dimensional for $\alpha\in\Set*{Q_0,Q_1,\Sq^2}$. 
	Since 
	\[
		H(DM/(\rho,\tau);\alpha)
		\iso
		H(D(M/(\rho,\tau));\alpha)
		\iso 
		\Hom_{\F_2}(H(M/(\rho,\tau);\alpha), \F_2),
	\]
	we see that $DM$ also has one-dimentional Margolis homologies. 
	Then
	\[
		H(M/(\rho,\tau);\alpha)\otimes H(DM/(\rho,\tau);\alpha)
		\longrightarrow
		H(\F_2;\alpha)
	\]
	has to be an isomorphism by comparing the dimensions. 
	Then, by \cref{cor:Margolis}, $M\tensor{\MtwoR}DM\to \MtwoR$ is a stable equivalence. 
	Hence, $M$ is stably invertible.
\end{proof}


\section{The Picard group of \texorpdfstring{$\cAR$}{A(1)R}}\label{sec:pic}
This section devotes to compute the Picard group $\Pic(\cAR)$.

First, let's use \cref{Margolisinvert} to show that the $\cAR$-module $J$ is stably invertible.

\begin{prop}\label{prop:joker}
The $\cAR$-module $J$ is stably invertible, and its class $[J]$ has infinite order in $\Pic(\cAR)$. 
\end{prop}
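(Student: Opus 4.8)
The plan is to verify invertibility with the Margolis-homology criterion \cref{Margolisinvert}, and then to upgrade this to infinite order by tracking the internal bidegree of a surviving Margolis class under tensor powers. The first task is to pin down the module $J/(\rho,\tau)$ over $\cAR/(\rho,\tau)$. The joker $J=\cAR/\cAR\Sq^3$ is free over $\MtwoR$ on the five generators shown in \cref{fig:joker}; write $1$ for the bottom class. The only action decorated by $\tau$ is the top $\Sq^2$ (the dotted line), which encodes the $\R$-motivic Adem relation $\Sq^2\Sq^2=\tau\,\Sq^3\Sq^1$. Since base change to $\F_2$ is strong monoidal and exact by \cref{basechange} and sends $\tau$ to $0$, the module $J/(\rho,\tau)$ is the five-dimensional module whose $\Sq^1$- and $\Sq^2$-actions are read off from \cref{fig:joker} with the dotted $\Sq^2$ set to zero; in particular $\Sq^2$ squares to zero here (again by the Adem relation modulo $\tau$), so $H(-;\Sq^2)$ is defined.

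From this explicit description I would compute the three Margolis homologies and find that each of $H(J/(\rho,\tau);Q_0)$, $H(J/(\rho,\tau);Q_1)$, and $H(J/(\rho,\tau);\Sq^2)$ is one-dimensional. By \cref{Margolisinvert}, $J$ is therefore stably invertible.

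For the infinite order, I would record that the class surviving in $H(J/(\rho,\tau);Q_0)$ is $\Sq^2\cdot 1$, which sits in bidegree $(2,1)$. Suppose $[J]^{n}=[\MtwoR]$ for some $n\geqslant 1$, i.e.\ the $n$-fold tensor power $J^{\otimes n}$ (over $\MtwoR$) is stably equivalent to the unit. The Künneth formula then makes $H(J^{\otimes n}/(\rho,\tau);Q_0)$ one-dimensional and concentrated in bidegree $(2n,n)$, whereas $H(\MtwoR/(\rho,\tau);Q_0)=H(\F_2;Q_0)$ is one-dimensional in bidegree $(0,0)$. Because a stable equivalence induces a bidegree-preserving isomorphism on Margolis homology by \cref{cor:Margolis}, this would force $(2n,n)=(0,0)$, which is impossible for $n\geqslant 1$. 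Hence $[J]$ has infinite order. Equivalently, one could pass $[J]$ through the injection $\Pic(\cAR)\hookrightarrow\Pic(\cAC)$ of \cref{injPic}, whose image $[J/\rho]$ is the class of the $\C$-motivic joker, already known to have infinite order by \cite{gheorghe2018picard}.

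The hard part is the bookkeeping of the first step: one must see that the $\R$-motivic Adem relation attaches the factor $\tau$ to exactly the top $\Sq^2$, so that $J/(\rho,\tau)$ is the \emph{degenerate} joker rather than the classical one. This single point is the source of the whole phenomenon — classically the analogous class is $2$-torsion, whereas here the surviving $Q_0$-class lives in the nonzero bidegree $(2,1)$, and its tensor powers march off to infinity.
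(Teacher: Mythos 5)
Your proposal is correct and follows essentially the same route as the paper: base change $J$ to the degenerate joker (the dotted $\Sq^2$ dies), check that the $Q_0$-, $Q_1$-, and $\Sq^2$-Margolis homologies are each one-dimensional (generated by $\Sq^2\cdot 1$, $\Sq^2\cdot 1$, and the top class, respectively), and invoke \cref{Margolisinvert}, then get infinite order by tracking Margolis degrees of tensor powers via the K\"unneth formula and \cref{cor:Margolis}. The only (harmless) difference is the bookkeeping in the last step: you compare the absolute bidegree $(2n,n)$ of the $Q_0$-class with $(0,0)$, while the paper compares the \emph{relative} degrees of the $Q_0$- and $\Sq^2$-generators, which is shift-invariant and hence also rules out $J^{\otimes n}\simeq\Sigma^{s,w}\MtwoR$; both arguments suffice for the stated claim.
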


\begin{proof}
	Recall the joker $J$ in \cref{fig:joker}. 
	Its base change to $\F_2$ has the following form:
	\begin{figure}[h]
		\[
		\begin{tikzpicture}
		\begin{scope}[ thick, every node/.style={sloped,allow upside down}, scale=0.5]
		\draw (3,0)  node[inner sep=0] (v30) {} -- (3,1) node[inner sep=0] (v31) {};
		\draw  (2.5,2) node[inner sep=0] (v22) {};
		\draw (3,3)  node[inner sep=0] (v33) {} -- (3,4) node[inner sep=0] (v34) {};
		\draw [color=blue] (v31) to [bend right=70] (v33);
		\draw [color=blue] (v30) to [bend left = 50] (v22);
		\filldraw (v30) circle (2.5pt);
		\filldraw (v31) circle (2.5pt);
		\filldraw (v22) circle (2.5pt);
		\filldraw (v33) circle (2.5pt);
		\filldraw (v34) circle (2.5pt);
		\draw (0,0) node[left]{$ $} (0,1) node[left]{$ $} (2,2) node[above]{$\bar{a}$} (1,2) node[left]{$ $};
		\draw (3,4) node[right]{$\bar{b}$} (3,3) node[right]{$ $} (2,3) node[left]{$ $} (2,2) node[right]{$ $};
		\end{scope}\end{tikzpicture}
		\]
		\caption{The $\cAR/(\rho, \tau)$-module $J/(\rho,\tau)$.} 
	\end{figure}

	We consider the Margolis homologies of $J/(\rho,\tau)$ with respect to $Q_0$, $Q_1$ and $\Sq^2$. 
	The $Q_0$ and $Q_1$-Margolis homologies are generated by $\bar{a}$. 
	The $\Sq^2$-Margolis homology is generated by $\bar{b}$. 
	Hence, by \cref{Margolisinvert}, $J$ is invertible.

	In order to see that $[J]$ has infinite order, notice that $a$ and $b$ are in different degrees. 
	Therefore, the degrees of the generators of $H(J^{\otimes n};Q_0)$ and $H(J^{\otimes n}; \Sq^2)$ are different for any tensor power of $J$.  
	Meanwhile, the generators of $H(\MtwoR;Q_0)$ and $H(\MtwoR; \Sq^2)$ are in the same degree. 
	Therefore, the tensor power $J^{\otimes n}$ is never stably equivalent to $\MtwoR$ since there's no morphism can induce isomorphisms on their Margolis homologies.
\end{proof}

\begin{remark}
Classically, the joker $J$ induces an order 2 element $[J]$ in $\Pic(\cA(1))$. Namely, $J^2$ is stably equivalent to $\F_2$.
\end{remark}

We now take a look at the Picard group of $\cAR/(\rho,\tau)\iso \cAC/\tau$.

\begin{lem}[\cite{gheorghe2018picard}*{Lemma 4.3}]
	As ungraded Hopf algebras, $\cAR/(\rho,\tau)$ is isomorphic to the group algebra $\F_2[D_8]$, where $D_8$ is the dihedral group of order $8$. 
	More explicitely, 
	\[
		\cAR/(\rho,\tau)\iso 
		\frac{\F_2[\Sq^1,\Sq^2]}{\Sq^1\Sq^1,\Sq^2\Sq^2,\Sq^1\Sq^2\Sq^1\Sq^2+\Sq^2\Sq^1\Sq^2\Sq^1}.
	\] 
\end{lem}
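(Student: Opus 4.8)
The plan is to split the statement into two essentially independent tasks: first, to pin down the explicit presentation of the base-changed algebra $\cAR/(\rho,\tau)$ displayed in the lemma, and second, to identify the presented algebra with the group algebra $\F_2[D_8]$. Throughout I use that $\cAR$ is free of rank $8$ over $\MtwoR$ (see \cref{fig:motivicA1}), so that $\cAR/(\rho,\tau)=\F_2\tensor{\MtwoR}\cAR$ is $8$-dimensional over $\F_2$ in prescribed bidegrees, the top generator sitting in bidegree $(6,2)$.

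For the presentation, I would start from the $\R$-motivic Adem relations defining $\cAR$ as an $\MtwoR$-algebra on $\Sq^1$ (bidegree $(1,0)$) and $\Sq^2$ (bidegree $(2,1)$). The relation $\Sq^1\Sq^1=0$ holds already over $\MtwoR$, while the relation for $\Sq^2\Sq^2$ has the shape $\Sq^2\Sq^2=\tau\,\Sq^1\Sq^2\Sq^1+(\text{terms divisible by }\rho)$; every correction term carries a factor of $\tau$ or $\rho$, so reducing modulo $(\rho,\tau)$ leaves $\Sq^1\Sq^1=0$ and $\Sq^2\Sq^2=0$. Let $B$ be the algebra of the stated presentation. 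A reduced-word count gives $\dim_{\F_2}B=8$: the relations $(\Sq^1)^2=(\Sq^2)^2=0$ force nonzero monomials to alternate $\Sq^1,\Sq^2$, the relation $\Sq^1\Sq^2\Sq^1\Sq^2=\Sq^2\Sq^1\Sq^2\Sq^1$ identifies the two length-$4$ words and (as one checks) forces all words of length $\geqslant 5$ to vanish, leaving the eight monomials $1,\Sq^1,\Sq^2,\Sq^1\Sq^2,\Sq^2\Sq^1,\Sq^1\Sq^2\Sq^1,\Sq^2\Sq^1\Sq^2,\Sq^1\Sq^2\Sq^1\Sq^2$. To obtain $B\iso\cAR/(\rho,\tau)$ I verify that all three relations of $B$ hold in $\cAR/(\rho,\tau)$: the first two are the reduced Adem relations, and the third holds because, reading the multiplicative structure off \cref{fig:motivicA1}, both length-$4$ words equal the unique nonzero class in the top bidegree $(6,2)$, which is one-dimensional. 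This yields a surjection $B\twoheadrightarrow\cAR/(\rho,\tau)$ of $8$-dimensional $\F_2$-algebras, hence an isomorphism.

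To identify $B$ with $\F_2[D_8]$, I use the Coxeter presentation $D_8=\langle a,b\mid a^2=b^2=1,\ (ab)^4=1\rangle$ and define an algebra map $\phi\colon\F_2[D_8]\to B$ by $a\mapsto 1+\Sq^1$ and $b\mapsto 1+\Sq^2$. These images are units (they are unipotent in the local, connected graded algebra $B$, whose augmentation ideal $\mathfrak{m}=(\Sq^1,\Sq^2)$ is nilpotent), and in characteristic $2$ one has $(1+\Sq^1)^2=1+\Sq^1\Sq^1=1$ and likewise $(1+\Sq^2)^2=1$. Writing $w=(1+\Sq^1)(1+\Sq^2)$, a short expansion gives $w^2=1+z$ with $z=\Sq^1\Sq^2+\Sq^2\Sq^1+\Sq^1\Sq^2\Sq^1+\Sq^2\Sq^1\Sq^2+\Sq^1\Sq^2\Sq^1\Sq^2\in\mathfrak{m}^2$; since all words of length $\geqslant 5$ vanish, the only surviving contribution to $z^2$ is $\Sq^1\Sq^2\Sq^1\Sq^2+\Sq^2\Sq^1\Sq^2\Sq^1$, which is $0$ by the third relation. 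Hence $w^4=(1+z)^2=1+z^2=1$, so $\phi$ respects $(ab)^4=1$ and is well defined; it is surjective because $\Sq^1=\phi(a)+1$ and $\Sq^2=\phi(b)+1$ generate $B$, and as both algebras have dimension $8$ it is an isomorphism. (Equivalently $w$ has order exactly $4$, since $z\neq 0$, so $\langle 1+\Sq^1,\,1+\Sq^2\rangle$ is a copy of $D_8$ spanning $B$.)

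I expect the main obstacle to be the first task: correctly transcribing the $\R$-motivic Adem relation for $\Sq^2\Sq^2$, confirming that every correction term is divisible by $\rho$ or $\tau$, and in particular establishing that the third relation genuinely holds in $\cAR/(\rho,\tau)$ rather than merely being consistent with the dimension count. The cleanest route is the graded argument isolating the top bidegree $(6,2)$. Once the presentation is in hand, the identification with $\F_2[D_8]$ is a short self-contained finite computation whose heart is the identity $z^2=0$ reducing $(ab)^4=1$ to the displayed length-$4$ relation. Alternatively one may bypass the Adem bookkeeping entirely by invoking $\cAR/(\rho,\tau)\iso\cAC/\tau$ together with the known $\C$-motivic computation, but I prefer the direct presentation since it makes the passage to $\F_2[D_8]$ transparent.
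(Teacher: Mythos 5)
Your argument, as far as it goes, is correct, and it is essentially the standard one: the paper itself gives no proof of this lemma (it is quoted from \cite{gheorghe2018picard}*{Lemma 4.3}, which is exactly your fallback route via $\cAR/(\rho,\tau)\iso\cAC/\tau$), and the cited proof likewise realizes $D_8$ as the subgroup of units generated by $1+\Sq^1$ and $1+\Sq^2$. Your bookkeeping checks out: the word count giving $\dim_{\F_2}B\leqslant 8$ is right; the vanishing of $\Sq^2\Sq^2$ modulo $(\rho,\tau)$ is correct (and can be seen even without transcribing the Adem relation: no monomial basis element of $\cAR$ in \cref{fig:motivicA1} lives in bidegree $(4,2)$, so $\Sq^2\Sq^2$ necessarily lies in $(\rho,\tau)\cdot\cAR$); the solid edges of \cref{fig:motivicA1} do force $\Sq^1\Sq^2\Sq^1\Sq^2$ and $\Sq^2\Sq^1\Sq^2\Sq^1$ both to equal the top class; and your computation $z^2=\Sq^1\Sq^2\Sq^1\Sq^2+\Sq^2\Sq^1\Sq^2\Sq^1=0$, hence $w^4=1$, is verified.

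The genuine gap is the qualifier ``as ungraded \emph{Hopf} algebras'': you prove an isomorphism of ungraded algebras, and your $\phi$ is not a coalgebra map --- nor can any repaired map be. Modulo $(\rho,\tau)$ the motivic Cartan formula $\Delta(\Sq^2)=\Sq^2\tensor{}1+\tau\,\Sq^1\tensor{}\Sq^1+1\tensor{}\Sq^2$ makes both $\Sq^1$ and $\Sq^2$ primitive, so $\cAR/(\rho,\tau)$ has nonzero primitives and only the trivial group-like element; the group algebra $\F_2[D_8]$ has zero primitives and eight group-like elements. In particular $\phi(a)=1+\Sq^1$ fails to be group-like, and no Hopf-algebra isomorphism exists between the two coproduct structures: what is true, and what you establish, is an isomorphism of augmented $\F_2$-algebras. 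Read literally, then, the statement you were asked to prove is stronger than what holds, and a careful write-up must say so explicitly --- especially because this lemma feeds into \cref{PicA(1)R} through the Carlson--Th\'evenaz computation of $\Pic(\F_2[D_8])$, which a priori depends on the tensor structure of the stable module category and hence on the comultiplication, not just the algebra. So keep your two-step algebra argument (it is the right one), but add a remark that the comultiplications differ, and either track how the Picard-group input survives this discrepancy or defer to the discussion in \cite{gheorghe2018picard}; silently asserting the Hopf clause, as the proposal does, leaves the step that the downstream application actually needs unjustified.
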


\begin{thm}\label{PicA(1)R}
The Picard group of $\cAR$ is a free abelian group of rank $4$. 
It is generated by $[\Sigma_s]$, $[\Sigma_w]$, $[I]$, and $[J]$.
\end{thm}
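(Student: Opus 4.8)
The statement has two halves: the four classes $[\Sigma_s],[\Sigma_w],[I],[J]$ must be free of rank $4$, and they must exhaust $\Pic(\cAR)$. That all four are invertible is already in hand, since $[\Sigma_s],[\Sigma_w],[I]$ come from \cref{lem:PicZ3} and $[J]$ from \cref{prop:joker}. The plan is to transport the whole question along the base-change functors of \cref{basechange} and read both halves off an already-understood Picard group.

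Concretely, I would use the injection $\Pic(\cAR)\longmono\Pic(\cAR/\rho)$ of \cref{injPic} together with the identification $\cAR/\rho\iso\cAC$. Because the base change of \cref{basechange} is strongly monoidal, triangulated, and commutes with the grading shifts and with the formation of the joker, the four generators are carried to the corresponding four classes over $\cAC$: the two grading shifts, the image of the algebraic loop $\Omega$, and the $\C$-motivic joker. Thus the composite $\Z^4\to\Pic(\cAR)\longmono\Pic(\cAC)$ sends the standard generators of $\Z^4$ to these four classes.

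It then remains to know the target. By the computation of Gheorghe–Isaksen–Ricka, $\Pic(\cAC)$ is free abelian of rank $4$, generated by exactly these two grading shifts, the loop, and the $\C$-motivic joker. Granting this, the composite above is an isomorphism onto $\Pic(\cAC)$: it is injective because $\Pic(\cAR)\to\Pic(\cAC)$ is (\cref{injPic}) and the four target classes form a basis, and its image is all of $\Pic(\cAC)$ since that group is generated by those classes. Consequently $\Z^4\to\Pic(\cAR)$ is injective, so the four classes are free of rank $4$; and $\Pic(\cAR)\longmono\Pic(\cAC)$ is then forced to be surjective, hence an isomorphism, so no further invertibles exist and $\Pic(\cAR)=\langle[\Sigma_s],[\Sigma_w],[I],[J]\rangle\iso\Z^4$.

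The real weight of the argument sits in the one imported fact, that $\Pic(\cAC)$ is exactly $\Z^4$ on these four generators; this is an Adams–Priddy-type classification of invertibles, reprovable over $\cAR/(\rho,\tau)\iso\F_2[D_8]$ via the criterion \cref{Margolisinvert}, where an invertible module has one-dimensional $Q_0$-, $Q_1$-, and $\Sq^2$-Margolis homologies, and tracking the three bidegrees in which they sit (additively under the tensor product, by Künneth) pins down the module up to the two shifts, the loop, and the joker. The phenomenon special to the motivic refinement, and the main obstacle to transplanting the classical count, is that the bigrading records the $Q_0$- and $\Sq^2$-classes of $J$ in genuinely different bidegrees, so the classical relation $J^{\otimes 2}\simeq\F_2$ is broken and $[J]$ acquires infinite order — this is exactly what promotes the rank to $4$. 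If a self-contained lower bound is preferred, the same bidegree assignment defines a homomorphism $\Pic(\cAR)\to\Z^6$ whose values on the four generators are $\Z$-independent, reproving freeness of rank $4$ without reference to the target group.
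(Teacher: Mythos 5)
Your argument is correct, but it takes a genuinely different route from the paper's. Both proofs share the same skeleton: the lower bound $\Z^4\longmono\Pic(\cAR)$ from \cref{lem:PicZ3} and \cref{prop:joker}, injectivity of base change from \cref{injPic}, and a sandwich argument identifying everything with a known target group. The difference is where you stop. You stop after one base change, at $\cAR/\rho\iso\cAC$, and import the Gheorghe--Isaksen--Ricka computation $\Pic(\cAC)\iso\Z^4$ on the matching four generators; the paper instead composes both base changes, lands in $\Pic(\cAR/(\rho,\tau))\iso\Pic(\F_2[D_8])$, identifies the ungraded Picard group of $\F_2[D_8]$ as $\Z^2$ on $[\Omega\F_2]$ and an exotic module $L$ via Carlson--Th\'evenaz, adds the bigrading to get $\Z^4$, and checks by direct computation that $J\mapsto[\Omega L]$. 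Your route is shorter but outsources the hard content: the GIR theorem you cite is itself proved by descending to $\F_2[D_8]$ and Carlson--Th\'evenaz, so the paper's proof is the more self-contained one, depending only on the classical group-algebra result rather than on the motivic literature. Conversely, your version needs its own small matching computations in place of the paper's $J\mapsto[\Omega L]$: that $I/\rho$ is stably the loop of $\MtwoC$ (immediate since the functor of \cref{basechange} is triangulated) and that $J/\rho$ is the $\C$-motivic joker (immediate from $J=\cAR/\cAR\Sq^3$ and right-exactness of base change, noting $J$ is free over $\MtwoR$ so it lives in $\modp{\cAR}$); also note that four generators of a free abelian group of rank $4$ automatically form a basis, so your ``basis'' claim is fine. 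One caution on your final remark: the bidegree homomorphism $\Pic(\cAR)\to\Z^6$ built from the positions of the three Margolis homology classes reproves only the lower bound (freeness of the four classes); surjectivity genuinely requires identifying the target Picard group, whether over $\cAC$ as you do or over $\F_2[D_8]$ as the paper does, so it cannot replace the imported classification.
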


\begin{proof}
	Consider the composition 
	\[
		\Z^4\longmono
		\Pic(\cAR)\longrightarrow
		\Pic(\cAR/(\rho,\tau))\iso\Pic(\F_2[D_8]),
	\] 
	where the first inclusion comes from \cref{lem:PicZ3,prop:joker}, 
	and the second homomorphism comes from from \cref{injPic} and is injective. 
	Note that the ungraded Picard group of $\F_2[D_8]$ is isomorphic to $\Z^2$ with generators given by $[\Omega\F_2]$ and a module $L$ \cite{carlson2000torsion}*{Theorem 5.4}. 
	Adding the motivic bigrading, we see that the graded Picard group of $\F_2[D_8]$ is isomorphic to $\Z^4$. 
	
	By direct computation, the composition sends the joker $J$ to $[\Omega L]$. 
	Hence, it is an isomorphism.
	Then $\Pic(\cAR)\to\Pic(\cAR/(\rho,\tau))Q$, and thus the inclusion $\Z^4\hookrightarrow\Pic(\cAR)$ are also isomorphisms.
\end{proof}

\section{Generalization and other application}

\subsection{Picard group of the $\R$-motivic finite Steenrod subalgebra $\cAR[n]$}\label{sec:final}
The Picard group of the finite Steenrod subalgebra $\cA(2)$ is computed in \cite{bhattacharya2017stable}, and has been generalized to all integers in \cite{pan2022stable}. 
The authors expect to obtain the similar results for $\C$-motivic and $\R$-motivic Steenrod subalgebras $\cAC[n]$ and $\cAR[n]$.

We have injections
\[
	\Z^3\longmono
	\Pic(\cAR[n])\longrightarrow
	\Pic(\cAC[n])\longrightarrow
	\Pic(\cA(n)^\R/(\rho,\tau)).
\]

However, the computation of the $\R$-motivic Picard group relies on the Picard group of $\cA(n)^\R/(\rho,\tau)$, which is not exactly the classical Steenrod subalgebra $\cA(n)$. 
Hence, there are not existing results directly apply.

\begin{bibdiv}
\begin{biblist}

\bib{adams1976uniqueness}{article}{
    author={Adams, J.~F.},
    author={Priddy, S.~B.},
    title={Uniqueness of BSO},
    journal={Mathematical Proceedings of the Cambridge Philosophical Society},
    date={1976},
    volume={80},
    number={3},
    publisher={Cambridge University Press},
    pages={475--509},
}

\bib{bhattacharya2022}{article}{
    author={Bhattacharya, Prasit},
    author={Guillou, Bertrand},
    author={Li, Ang},
    title={An $\R$-motivic $v_1$-self-map of periodicity 1},
    date={2022},
    journal={Homology, Homotopy \& Applications},
    volume={24},
    number={1},
    pages={299--324},
}

\bib{bhattacharya2017stable}{arXiv}{
    author={Bhattacharya, Prasit},
    author={Ricka, Nicolas},
    title={The stable picard group of $\cA(2)$},
    eprint={1702.01493},
    date={2017},
}


\bib{carlson2000torsion}{article}{
  title={Torsion endo-trivial modules},
  author={Carlson, Jon F and Th{\'e}venaz, Jacques},
  journal={Algebras and representation Theory},
  volume={3},
  pages={303--335},
  year={2000},
  publisher={Springer}
}

\bib{gheorghe2018picard}{article}{
    author={Gheorghe, Bogdan},
    author={Isaksen, Daniel~C.},
    author={Ricka, Nicolas},
    title={The picard group of motivic $\cA(1)$},
    journal={Journal of Homotopy and Related Structures},
    date={2018},
    volume={13},
    pages={847--865},
}


\bib{hovey2003homotopy}{article}{
    author={Hovey, Mark},
    title={Homotopy theory of comodules over a Hopf algebroid},
    journal={Contemporary Mathematics},
    date={2003},
    volume={346},
    pages={261--304},
}

\bib{Lang}{book}{
    author={Lang, Serge},
    title={Algebra},
    edition={3rd edition},
    series={Graduate Texts in Mathematics},
    volume={211},
    publisher={Springer-Verlag, New York},
    date={2002},
}

\bib{margolis}{book}{
    author={Margolis, Harvey~Robert},
    title={Spectra and the Steenrod Algebra: modules over the Steenrod algebra and the stable homotopy category},
    series={North-Holland Mathematical Library},
    volume={29},
    publisher={North-Holland},
    date={1983},
}

\bib{pan2022stable}{arXiv}{
    author={Pan, Jianzhong},
    author={Yan, Rujia},
    title={The stable picard group of $\cA(n)$},
    date={2022},
    eprint={2212.09985},
}


\bib{ravenel2023complex}{book}{
    author={Ravenel, Douglas~C.},
    title={Complex cobordism and stable homotopy groups of spheres},
    publisher={American Mathematical Society},
    series={AMS Chelsea Publishing},
    date={2023},
    volume={347},
}

\bib{voevodsky2003reduced}{article}{
    author={Voevodsky, Vladimir},
    title={Reduced power operations in motivic cohomology},
    date={2003},
    journal={Publications Math{\'e}matiques de l'IH{\'E}S},
    volume={98},
    pages={1--57},
}

\bib{voevodsky2003motivic}{article}{
    author={Voevodsky, Vladimir},
    title={Motivic cohomology with $\Z/2$-coefficients},
    date={2003},
    journal={Publications Math{\'e}matiques de l'IH{\'E}S},
    volume={98},
    pages={59--104},
}

\bib{HBTrev}{article}{
    author={Yetter, David~N.},
    title = {The Hilbert basis theorem revisited},
    journal = {Journal of Pure and Applied Algebra},
    volume = {65},
    number = {3},
    pages = {311-319},
    year = {1990},
}

\end{biblist}
\end{bibdiv}

\end{document}